\documentclass{article}
\usepackage[a4paper]{geometry}
\usepackage[english]{babel}
\usepackage[utf8]{inputenc}
\usepackage{amsmath}
\usepackage[toc,page]{appendix}
\usepackage{amsthm}
\usepackage[small]{titlesec}
\usepackage{mathtools}
\usepackage{graphicx}
\usepackage{mathrsfs}
\usepackage{amsfonts} 
\usepackage{amssymb}
\usepackage{xcolor}
\usepackage{esint}
\usepackage{relsize}
\usepackage{comment}
\usepackage{bigints}
\usepackage{dsfont}
\usepackage{geometry}
\usepackage{float}
\usepackage{bbm}

\numberwithin{equation}{section}
\theoremstyle{plain}
\newtheorem{thm}{Theorem}[section]

\newtheorem{lem}[thm]{Lemma}
\newtheorem{prop}[thm]{Proposition}

\newtheorem{mainresult}[thm]{Main Result}
\theoremstyle{definition}
\newtheorem{defn}{Definition}
\newtheorem{remark}{Remark}

\newenvironment{proofof}[1][Proof]{\noindent \textit{#1.} }{\ \qed}

\newcommand{\R}{\mathbb{R}^3}

\newcommand{\Rc}{\mathbb{R}^3}
\newcommand{\M}{\mathscr{M}_{+}\left(\Rc \right)}

\newcommand{\vp}{\varphi}
\newcommand{\Rl}{\mathbb{R}}

\newcommand{\adresse}{
  \begin{description}
   \item[A. Nota:] {Gran Sasso Science Institute,\\ Viale Francesco Crispi 7, 67100 L’Aquila, Italy  \\
E-mail: \texttt{alessia.nota@gssi.it}}

\item[J.~J.~L. Vel\'azquez:] { Institute for Applied Mathematics, University of Bonn, \\ Endenicher Allee 60, D-53115 Bonn, Germany\\
E-mail: \texttt{velazquez@iam.uni-bonn.de}}
  \end{description}
}

\theoremstyle{remark}

\title{Non Maxwellian long-time asymptotics  
for Homoenergetic Boltzmann flows under strong shear}

\author{Alessia Nota, Juan J. L. Vel\'azquez}
\begin{document}

\maketitle

\begin{abstract}
We compute, using  
matched asymptotic expansions, the long-time asymptotics of homoenergetic solutions to the nonlinear Boltzmann equation, in presence of a shear term, in the hyperbolic dominated regime, for homogeneous collision kernels for which there are infinitely many collisions as $\tau \to \infty$. The mass of the resulting solutions is concentrated in an increasing family of dyadic scales, something that makes the behaviour of these solutions very different from classical Maxwellian distributions. 
  
\end{abstract}

\bigskip

\tableofcontents

\section{Introduction}

We consider the nonlinear Boltzmann equation for homoenergetic flows. 
Specifically, let $g=g(t,x,v)$, $g: \R \times \R \times \Rl_+ \rightarrow  \mathbb{R}_+$, denote a solution of the spatially inhomogeneous Boltzmann equation 
\begin{equation}
  \partial_t g+v\cdot \partial_x g = Q(g,g), \qquad g=g(t,x,v) 
  \end{equation}
where
\begin{equation}\label{eq:collB_bis}
  Q(g,g)(v)=   \int_{\R}\int_{S^2} B(n\cdot \omega,|v-v_*|)(f'_*f'-f_*f)\, dv_* d \omega\,. 
\end{equation} 
Here $n=\frac{v-v_*}{|v-v_*|}$ is  the unit vector and $\omega\in S^2$ the scattering vector. We use the conventional notation in kinetic theory  $g'_*=g(v'_*),  \, g'=g(v'),\,g_*=g(v_*),\,g=g(v)$, 
and the post-collisional velocities $(v',v'_*)$ are related to the pre-collisional velocites $(v,v_*)$ by the classical collision rule
\begin{align}\label{eq:CollRule}
 & v'= v- \left((v-v_*)\cdot \omega \right)\omega \notag \\&
 v'_*=v_* +\left( (v-v_* )\cdot \omega \right) \omega\,.
\end{align} 

We focus on solutions $g=g(t,x,v)$ having the following equidispersive form 
\begin{equation}\label{eq:homansatz}
    g(t, x, v) = f (t, w), \quad w = v -\xi (t, x), \quad \text{with}\  \ \xi(t, x) = L(t)x= A(I + tA)^{-1} x
\end{equation} 
where $A\in M_{3,3,}(\mathbb{R})$ is a $3\times 3$ real matrix, e.g., see \cite{CercArchive,JNV1}.  
The characteristic feature of these solutions is the fact that the dispersion of velocities is the same at any point $x\in\R$ for any given time but the mean value of the velocity depends on the position and changes also with time. Moreover, 
every solution of with the form  yields only time-dependent internal energy and density $\mathcal{E} (t, x) = \mathcal{E} (t),$ $\rho (t, x) = \rho(t)$ which motivates the name ``homoenergetic''. We further notice that this class of solutions (i.e., those of the form \eqref{eq:homansatz}) is motivated by an invariant manifold of the classical molecular dynamics equations which are invariant under a suitable symmetry group 
(e.g., see  \cite{DJ1, DJ2, James}), which is inherited  by the Boltzmann equation.  
The nonlinear Boltzmann equation for homoenergetic flows, which describes the evolution of the distribution function $f=f\left(w,t\right)$ in the velocity phase space $\R$, namely $f: \R \times \Rl_+ \rightarrow  \mathbb{R}_+$, then reduces to (e.g., see \cite{JNV1} and references therein)
\begin{equation}\label{eq:homBol}
    \partial_t f-L(t)w \cdot \partial_w f = Q(f,f), \qquad \, f=f (t, w) 
\end{equation}
where the collision operator $Q(f,f)$ is given by 
\begin{equation}\label{eq:collB}
  Q(f,f)(w)=   \int_{\R}\int_{S^2} B(n\cdot \omega,|w-w_*|)(f'_*f'-f_*f)\, dw_* d \omega\,,
\end{equation} 
and the relation between the post-collisional velocities $(w',w'_*)$ and the pre-collisional ones $(w,w_*)$ is given by the collision rule \eqref{eq:CollRule}. 

In the following we will denote the term  $L(t)w \cdot \partial_w f$ as hyperbolic term.   
We are interested in the long-time asymptotics of the 
function $\xi\left(  x,t \right)  =L\left(  t\right)  x = A\left(  I+tA\right)^{-1} x$ where 
the matrix $L(t)$ describes the mechanical deformation acting on the gas (cf.~Theorem 3.1 in \cite{JNV1}). Here we will focus on the case of \emph{simple shear deformation}, i.e. when the matrix $L(t)$ has the form
\begin{equation}\label{eq:ShearMat}
L = \left( \begin{array}{ccc} 0 & -K & 0 \\
	0 & 0 & 0  \\
    0 & 0 & 0 
\end{array} \right), \quad K>0.
 \end{equation}
 
Concerning the collision kernel $B$, we assume the collision kernels can be decomposed 
as 
\begin{equation}\label{eq:decB}
    B(n \cdot \omega,|w-w_\ast|)=\kappa (n\cdot \omega)|w-w_\ast|^{\gamma}, \quad \kappa \in L^{\infty}([-1,1]),\, \kappa   \geq 0, \, \gamma \in \mathbb{R}.
\end{equation}  
Actually, in this paper we consider the simplest case in which $B$ has  the form \eqref{eq:decB} and satisfies
\begin{equation}\label{eq:assB}
   B(n \cdot \omega, |w-w_\ast|)=|w-w_\ast|^{\gamma} \quad \text{with}\quad \gamma \in (-1, 0)\, . 
\end{equation}  
Notice that, for simplicity,  we have assumed the angular part $\kappa(n\cdot \omega)$ of the collision kernel to be constant and, without loss of generality, we choose the constant equal to $1$, i.e. $\kappa(n\cdot \omega)=1$.  It seems possible to adapt the argument of this paper to the more general case of $\kappa( n \cdot \omega)$ non-constant. 

Here we are only considering only a specific range of negative values for $\gamma$ and, to simplify the notation, we will set $a:=|\gamma|$.  Hence \eqref{eq:assB} becomes $B(n \cdot \omega, |w-w_\ast|)=|w-w_\ast|^{-a}$.   
\smallskip

\paragraph{State of the art on the theory of homoenergetic solutions.} 
In recent years, there has been much progress in the analysis of homoenergetic solutions of the Boltzmann equation.  These are a particular type of non-equilibrium solutions of the Boltzmann equation which are  useful to describe the dynamics of Boltzmann gases under
shear, expansion or compression. They were introduced by Galkin \cite{Galkin1} and Truesdell \cite{T} in the 1960s, and have later gained significant attention in both physics \cite{garzo, TM} and mathematical literature, starting with the works of Truesdell and Muncaster (\cite{TM}) and later Cercignani (\cite{CercArchive}). Additional and more recent references  will be described more in detail below.  

Due to the fact that these solutions
describe far-from-equilibrium phenomena their long-time asymptotics
 is not always described by Maxwellian distributions.
More precisely, these solutions describe dilute gases under boundary conditions at infinity, induced by  mechanical deformations, effectively behaving as an open system. Consequently, they generally lack a detailed balance condition and do not thermalize to a standard Maxwellian equilibrium. As a  result, the  characterization of the long-time asymptotics  is highly non-trivial and, in some regimes, poses some   mathematical challenges.

We observe that, in general, the homoenergetic solutions to the Boltzmann equation, for long times, are solutions with a non zero entropy production due to the fact that they are non Maxwellians. Moreover, they are associated to fluxes of energy and momentum coming or going towards infinity. 
\smallskip

The well-posedness of homoenergetic solutions was first established in \cite{CercArchive}, \cite{Cerc2000}, and later in \cite{JNV1}, the long-time behavior has been the subject of extensive recent study \cite{BNV, DL1, DL2, JNV1, JNV2, JNV3, JQW, K1, K2, MT, NV}, including extensions to inelastic interactions \cite{CCDL}. We also refer to \cite{MNV25a,MNV25b} for the analysis of homoenergetic solutions to a linear version of the Boltzmann equation, specifically the Rayleigh-Boltzmann equation.

 It has been originally noticed in \cite{JNV1} (see \cite{JNV2,JNV3}) that according to the value of the homogeneity parameter $\gamma$ and the type of mechanical deformation we can distinguish between  the collision-dominated case, i.e., when the collision dynamics is the dominant one for large times; the case in which the collision term and the drift term in \eqref{eq:homBol} balance and the hyperbolic-dominated case, when the drift term in \eqref{eq:homBol} is the dominant. 
 In the collision-dominate case it has been possible to prove that the asymptotic behaviour of  the solutions  to \eqref{eq:homBol} is described by Maxwellian distribution with time-dependent temperature (cf. \cite{JNV2, K1}), where the change of temperature is due to the flux of energy   induced by the mechanical deformation (e.g., simple shear). On the other hand in the case of balance we have obtained existence, uniqueness and global stability of non-equilibrium self-similar solutions (i.e. non Maxwellian distributions) under smallness assumption of the mechanical deformation (cf.~\cite{BNV, JNV1}). Also in this case it is possible to associate a macroscopic temperature to the distribution (in terms of variance of the velocity) which changes in time due to the effect of the deformation.   The case in which the hyperbolic term is dominant yields asymptotic distribution of particles velocities that are very different from Maxwellian distribution. 
This is also the case in which less information about the long-time behaviour of the solutions is available. This is due to the fact that the main contribution to the particles velocities distribution is due to the deformation term (e.g., shear) that prevents the thermalization effect of collisions (cf.~\cite{JNV3}).   
 In this case, it is possible to have two possible situations: either the collisions are completely negligible for long times (\emph{frozen collisions}) or there are infinitely many collisions that, however, take place in increasingly longer times. These collisions have a huge effect on the distribution of velocities of the system. This regime is the one considered in this paper. The fact that these increasingly rare collisions modify in a very significant manner the distribution of velocities has been noticed for the first time in \cite{JNV3} for a toy version of \eqref{eq:homBol} and later in \cite{MNV25b} for the Rayleigh-Boltzmann equation.

\paragraph{Main result of the paper. } As mentioned above, in this paper we are interested in studying the long-time asymptotics of the solutions to 
\begin{equation}\label{eq:BNLshear} 
	\partial_tf+K w_2 \partial_{w_1}f=\int_{\mathbb{R}^{3}}dw_{\ast}\int_{S^{2}%
	} \,  |w-w_*|^{-a}
	\left(  f^{\prime}f^{\prime}_{\ast}-ff_{\ast}\right) d\omega \, ,
\end{equation} 
in the hyperbolic-dominated regime, in the \emph{non frozen collision} case, which corresponds to cutoff soft-potential interactions, i.e. to collision kernels $B$ satisfying \eqref{eq:assB} with $0<a< 1$.  

We notice that it is possible to rescale time and velocities so that the shear parameter $K=1$; hence, without loss of generality, we assume $K=1$  in \eqref{eq:BNLshear} throughout the manuscript.

We remark, that the solutions constructed in this paper, can be thought as flux solutions due to the fact  that there is a flux of momentum from $w_3=-\infty$ to $w_3=+\infty$. 

In order to study the asymptotic behavior of the solutions to \eqref{eq:BNLshear} we will first reformulate the problem in an appropriate set of self-similar variables. We point out that we call this set of variables ``self-similar" because in the rescaled  equations  the different terms associated to the shear deformation become comparable. Notice however that the solutions obtained in this paper are far from being self-similar in the usual sense, namely they do not have the same shape, up to a rescaling, for different times. 

\smallskip

\noindent\textbf{Ansatz:} We look for solutions $f$ to \eqref{eq:BNLshear}  with the following form
   \begin{equation}\label{def:scaling}
    f(w,t)= \frac{1}{t^{\frac{3}{a}-2}} F\left(\xi_1, \xi_2, \xi_3, \tau \right), \;\;\xi_1=\frac{w_1}{t^{\frac{1}{a}}}, \ \ \xi_i=\frac{w_i}{t^{\frac{1}{a}-1}} , \ i=2,3, \quad \tau= \log t \ .  
\end{equation}
\smallskip

In this paper we will carry out a formal analysis of the long-time asymptotics of the velocity distribution, specifically we will derive an explicit expression for the asymptotic profile.   
We now state the main result  which provides a   characterization of 
the long-time behaviour of the solutions to \eqref{eq:BNLshear}. 

\begin{mainresult}[Asymptotics]\label{thm:ConvergenceF} 
    Let $f \in C([0,+\infty),\M)$ be 
    a solution of \eqref{eq:BNLshear} 
    and let $a=\vert \gamma\vert \in (0,1)$.  
    Let be $F$ the rescaled solution $f$ given as in \eqref{def:scaling}, written in the set of self-similar variables. 
Then we can construct, using matched asymptotic expansions, a solution with the following asymptotic behavior
\begin{equation}\notag
F(\xi_1,\xi_2,\xi_3,\tau) \sim 
 \frac 1 \pi \frac{\lambda(\tau )}{\xi_2^2+\xi_3^2 } \sigma\left(\frac{\xi_2}{\varepsilon(\tau)},\tau\right)\delta(\xi_1-\xi_2)  %
\ \ \text{as} \ \ \tau\to + \infty 
\end{equation}
where $\sigma=\sigma(s, \tau)$,  with $s={\xi_2}/{\varepsilon(\tau)}$, converges to zero as a power-law as $s\to 0^+$ and also when $s\gg e^\tau$ while $\lambda(\tau)\sim \frac{C_0}{\tau}$  with $C_0 =\frac 1 {2a} \frac 1 {\log \left(\frac{1}{1-a}\right)}.$   
In other words we can   interpret the function $\sigma(s, \tau)$ as a cutoff function $\chi_{\{s \in [1,{e^\tau}]\}}$. 
The characteristic scale  $\varepsilon(\tau)$ is defined as 
\begin{equation}\label{defMR:esplamtau0}
    \varepsilon(\tau) =\left( a\lambda((1-a)\tau)\right)^{\frac{1}{a}}\,. 
\end{equation}
\end{mainresult}

    We notice that the asymptotic form of the solution obtained in the Main Result encodes several relevant properties. Firstly, the solution is supported in the range of values of $\xi\in \mathbb{R}^3$ for which $\varepsilon(\tau) \lesssim \vert \xi \vert \lesssim \varepsilon(\tau)e^{\tau}$. On the other hand, most of the collisions take place for $\vert \xi\vert \approx \varepsilon(\tau)$. In addition, most of the particles are localized on the plane $\{\xi_1=\xi_2\}$. This is due to the effect of the shear acting on the system.

\paragraph{Plan of the paper.}
The paper is organized as follows.  In Section 2, we introduce the characteristic scales of the system and reformulate the problem in a convenient set of self-similar variables, detailing both its weak and strong formulations. Section 3 focuses on reducing the system to a two-dimensional problem, where we derive the evolution equation for a reduced distribution $\displaystyle G=\int_{\mathbb{R}} F \, d\xi_3$  
and reformulate it as a suitable boundary value problem. In Section 4, we discuss the rationale underlying the asymptotic form of the solution derived in this work. Section 5 establishes the asymptotics for the solution $G$ of the reduced two dimensional model, while Section 6 extends these results to compute the asymptotics for the full distribution $F$ as $\tau\rightarrow\infty$, alongside a test of their consistency. Section 7 provides a rigorous analysis of the removal mechanism of particles in the collision region, i.e. when $|\xi|\approx\varepsilon(\tau)$, including the proof of existence and the asymptotic behavior of the associated stationary problem. Finally, Appendix A contains the justification of the gain and loss terms for the reduced two dimensional problem, as well as the conservation of mass properties for both the full distribution $F$ and the reduced distribution $G$. These computations provide a direct and simple test of the validity of the evolution equations for $F$ and $G$. 
\smallskip

\section{Characteristic scales of the system and reformulation of the problem \eqref{eq:BNLshear} in the self-similar variables}

We are interested in the long-time behaviour of solutions of \eqref{eq:BNLshear} and from now on we  will focus on this.  
We start by introducing the notion of solutions to \eqref{eq:BNLshear} we will consider. 
We remark that we will use indistinctly $f(w)dw$ and $f(dw)$ to denote elements of these measure
spaces. The notation $f(dw)$ will be preferred when performing integrations or when
we want to emphasize that the measure might not be absolutely continuous with
respect to the Lebesgue measure. 

We now provide the precise notion of weak solutions of \eqref{eq:BNLshear} that we will use. 
\begin{defn}\label{def:weakSol}
 	Let $T>0$. A function $f \in C\left( [0,+\infty),\M\right)$ is a \emph{weak solution} of \eqref{eq:BNLshear} if for every function $\psi \in C^1\left( [0,+\infty),C^{\infty}_c(\R) \right) $ one has
 \begin{align}
     \label{eq:weakf}
		& \partial_t\left(\int_{\R}\psi(w,t)f(d w,t)\right)\notag \\ & = 
      \int_{\R} \partial_t \psi (w,t)\, f(dw,t) + \int_{\R}w_2\partial_{w_1}\psi(w,t) \, f(dw,t)\notag \\ & 
     \quad + \int_{S^2} d \omega\int_{\R} \int_{\R} B(n\cdot \omega,|w-w_*|)f(d w,t)f(d w_*,t)\Big[\psi(w'_*)+\psi(w')-\psi(w_*)-\psi(w)\Big].
	\end{align} 
\end{defn}

In what follows we will consider asymptotic expansions of solutions as $t \to \infty$. In order to make rigorous these expansions the issue of the global well-posedness
should be addressed. Global existence of solutions of \eqref{eq:BNLshear} with a suitable initial datum is expected for all reasonable
collision kernels $B$ with arbitrary homogeneity $\gamma$. Although we will not make them rigorous
in this paper, we believe that a typical global well-posedness result that can be proved rigorously
by adapting the current available methods for the homogeneous Boltzmann equation. 
In particular, adapting the proof of Theorem 4.2 in \cite{JNV1}, using
standard arguments from the theory of homogeneous Boltzmann equations as described in
\cite{CIP, De, He, V02}.
\bigskip

\noindent\textbf{Change to self-similar variables.} We perform the following change of variables 
\begin{equation}\label{eq:selfsim_var}
    {w_1}={t^{\frac{1}{a}}}\xi_1, \ \ w_i= {t^{\frac{1}{a}-1}}\xi_i , \ i=2,3, \quad t= e^\tau  
\end{equation}
and use, in the weak formulation \eqref{eq:weakf}, test functions with the form 
$$ \psi(w,t)=\varphi(\xi_1, \xi_2, \xi_3, \tau), , \;\;\xi_1=\frac{w_1}{t^{\frac{1}{a}}}, \ \ \xi_i=\frac{w_i}{t^{\frac{1}{a}-1}} , \ i=2,3, \quad \tau= \log t\, . $$ 
We then consider solutions $f$ with the following scaling 
\begin{equation}\label{def:scaling}
    f(w,t)= \frac{1}{t^{\frac{3}{a}-2}} F\left(\xi_1, \xi_2, \xi_3, \tau \right), \;\;\xi_1=\frac{w_1}{t^{\frac{1}{a}}}, \ \ \xi_i=\frac{w_i}{t^{\frac{1}{a}-1}} , \ i=2,3, \quad \tau= \log t \, .
\end{equation}

\subsection{Weak formulation of the problem in self-similar variables
}\label{ssec:weakF}

In order to obtain the weak formulation of the problem in the self-similar variables \eqref{eq:selfsim_var} we start considering the first two contributions in the r.h.s.~of \eqref{eq:weakf}, namely the ones corresponding to the time  derivative and the drift term respectively,  and treat separately the collision term. We have
\begin{align}\label{eq:weakF_tr}
&\partial_{\tau}\left(\int_{\R}\varphi(\xi,\tau)F(d \xi,\tau)\right)\notag \\ & =
    \int_{\R} F(d\xi,\tau)\left[\partial_\tau \varphi -\frac{1}{a}\xi_1\partial_{\xi_1}\varphi -\left(\frac{1}{a}-1\right)\xi_2\partial_{\xi_2}\varphi-\left(\frac{1}{a}-1\right)\xi_3\partial_{\xi_3}\varphi
    +\xi_2 \partial_{\xi_1}\varphi
    \right] (\xi,\tau)\notag\\&
    +\int_{\R} \varphi(\xi,\tau)Q(F,F)(d\xi,\tau)
\end{align}
    We now have to write explicitly the weak form of the collision operator, namely $\int_{\R} \varphi(\xi,\tau)Q(F,F)(d\xi,\tau)$ 
in terms of the rescaled variables $\xi_i$ and, thus, in terms of the test function $\varphi$.  
We observe that, due to the anisotropy of the
change of variables, the collision rule and the collision kernel are also affected by the scaling \eqref{def:scaling}.  
In order to determine the  collision rule for the variable $\xi$ and thus rewrite the weak form of the collision operator 
in terms of  $\varphi(\xi,\tau)$ in place of $\psi(w,t)$, we introduce the matrix
\begin{equation*}
\label{eq:defmatrixS}
    S(t)=\frac{1}{t^{\frac{1}{a}}}\left( \begin{array}{ccc}
        1 & 0 & 0  \\
        0 & t & 0  \\
        0 & 0 & t
    \end{array}\right)
\end{equation*}
so that we can write $\xi=S(t)w$ as well as $\xi_{\ast}=S(t)w_{\ast}$. Conversely, denoting by $T(t)$ the inverse matrix of $S(t)$ i.e. $T(t)=(S(t))^{-1}$, we have $w=T(t)\xi=\left(t^{\frac{1}{a}} \xi_1, t^{\frac{1}{a}-1}\xi_2, t^{\frac{1}{a}-1}\xi_3\right)$ and $w_{\ast}=T(t)\xi_{\ast}=\left(t^{\frac{1}{a}} \xi_{1,\ast}, t^{\frac{1}{a}-1}\xi_{2,\ast}, t^{\frac{1}{a}-1}\xi_{3,\ast}\right)$.  
Thus, we have 
\begin{align}  \label{eq:psiw'} \psi(w',t)=\varphi(S(t)w',t)=\varphi(S(t)\left[w-\left((w-w_{\ast})\cdot\omega\right) \omega\right],t)
\end{align} 
We notice that 
\begin{equation}\label{eq:CollRule_NewVar}
    w'= w-\left((w-w_{\ast})\cdot\omega\right) \omega=T(t)\xi-(T(t)(\xi-\xi_{\ast})\cdot \omega)\omega
\end{equation} and then, applying $S(t)$ to both sides of the above equation yields
\begin{equation}\label{eq:approxCollXi}
    \xi'=\xi-(T(t)(\xi-\xi_{\ast}) \cdot \omega)S(t)\omega.
\end{equation}
from \eqref{eq:psiw'}  we arrive at 
\begin{align}  \label{eq:psiw'_2} 
\psi(w',t)=\varphi(\xi- T(t)(\xi-\xi_{\ast})\cdot \omega)S(t)\omega,t)
\end{align} \smallskip

\noindent\textbf{Approximation:}  
Since we expect that in the regime considered here, namely when the shear deformation term is dominant, $[w_1]\gg [w_2],[w_3]$, we can assume $[w] \approx [w_1]$. Hence 
$$w-w_{\ast} =  T(t)\left(\xi-\xi_{\ast}\right)\approx t^{\frac{1}{a}} \left(\xi_{1}-\xi_{1,\ast}\right)$$ 
and 
$$(w-w_{\ast}) \cdot \omega = (T(t)\left(\xi-\xi_{\ast}\right))\cdot \omega \approx t^{\frac{1}{a}} \left(\xi_{1}-\xi_{1,\ast}\right)  \omega_1\, .$$ 
which implies 
\begin{align}
(T(t)(\xi-\xi_{\ast}) \cdot \omega)S(t)\omega  & \approx 
\left(t^{\frac{1}{a}} \left(\xi_{1}-\xi_{1,\ast}\right)\omega_1\right) t^{-\frac{1}{a}} \left( \begin{array}{ccc}
     1  & 0 & 0  \\
     0 & t & 0 \\
     0 & 0 & t
\end{array}\right) \omega  
\notag \\& 
=
\left( \left(\xi_{1}-\xi_{1,\ast}\right)\omega_1\right) \, t \left(I-e_1 \otimes e_1\right)(\omega)\end{align}
We then obtain the following approximation for \eqref{eq:psiw'_2}:
\begin{align} \label{eq:ffi_app} 
\psi(w',t)&=
\varphi(\xi- T(t)(\xi-\xi_{\ast})\cdot \omega)S(t)\omega,t)\\&=
\varphi(\xi- t \left(\xi_{1}-\xi_{1,\ast}\right)\omega_1 \left(I-e_1 \otimes e_1\right)(\omega),t) 
\end{align}
  Neglecting also the term $\xi$ we obtain the approximation 
\begin{align} \label{eq:ffi_app2} 
\psi(w',t)&\approx  
\varphi(t \left(\xi_{1,\ast}-\xi_{1}\right)\omega_1 \left(I-e_1 \otimes e_1\right)(\omega),t) 
\end{align} 
and, arguing similarly for $\psi(w'_{\ast},t)$, we obtain
\begin{align} \label{eq:ffi_app2} 
\psi(w'_{\ast},t)&\approx  
\varphi(t \left(\xi_{1}-\xi_{1,\ast}\right)\omega_1 \left(I-e_1 \otimes e_1\right)(\omega),t) 
\end{align}
Therefore,  the sum $\left[ 
\psi(w'_\ast)+\psi(w'_\ast)-\psi(w_\ast)-\psi(w) \right]$ in the integrand of \eqref{eq:weakf} becomes 
\begin{align} 
\Big[  
\varphi(t \left(\xi_{1,\ast}-\xi_{1}\right)\omega_1 \left(I-e_1 \otimes e_1\right)(\omega))+ 
\varphi(t \left(\xi_{1}-\xi_{1,\ast}\right)\omega_1 \left(I-e_1 \otimes e_1\right)(\omega)) 
-\varphi(\xi_\ast)-\varphi(\xi) \Big]
\end{align}
and then, 
using that $\frac{t}{|w-w_{\ast}|^{a}}\sim\frac{1}{|\xi_1-\xi_{1,\ast}|^{a}}$ as $t \to \infty$, the weak form of the collision operator becomes 
\begin{align}\label{eq:weakF_coll}
\int_{S^2}\int_{\R}\int_{\R} &\frac{1}{|\xi_1-\xi_{1,\ast}|^{a}} F(d\xi, \tau)F(d\xi_{\ast},t)   \notag \\& \Big[  
\varphi(t \left(\xi_{1,\ast}-\xi_{1}\right)\omega_1 \left(I-e_1 \otimes e_1\right)(\omega))+ 
\varphi(t \left(\xi_{1}-\xi_{1,\ast}\right)\omega_1 \left(I-e_1 \otimes e_1\right)(\omega)) 
-\varphi(\xi_\ast)-\varphi(\xi) \Big] 
 d\omega \, .
\end{align}

Collecting \eqref{eq:weakF_tr} and \eqref{eq:weakF_coll} we thus arrive at  the following weak formulation for the approximated problem  
\begin{align}\label{eq:weakF}
&\partial_{\tau}\left(\int_{\R}\varphi(\xi,\tau)F(d \xi,\tau)\right)\notag \\ & =
    \int_{\R} F(d\xi,\tau)\left[\partial_\tau \varphi -\frac{1}{a}\xi_1\partial_{\xi_1}\varphi -\left(\frac{1}{a}-1\right)\xi_2\partial_{\xi_2}\varphi-\left(\frac{1}{a}-1\right)\xi_3\partial_{\xi_3}\varphi
    +\xi_2 \partial_{\xi_1}\varphi\right] (\xi,\tau) \notag\\&
    \quad + \int_{S^2}\int_{\R}\int_{\R} \frac{1}{|\xi_1-\xi_{1,\ast}|^{a}} F(d\xi, \tau)F(d\xi_{\ast},t)   \notag \\& 
    \quad \times \Big[  
\varphi(t \left(\xi_{1,\ast}-\xi_{1}\right)\omega_1 \left(I-e_1 \otimes e_1\right)(\omega))+ 
\varphi(t \left(\xi_{1}-\xi_{1,\ast}\right)\omega_1 \left(I-e_1 \otimes e_1\right)(\omega)) 
-\varphi(\xi_\ast)-\varphi(\xi) \Big] 
 d\omega \,.
\end{align}
Obtaining the strong formulation of the equation satisfied by the rescaled measure $F$ introduced in \eqref{def:scaling},  requires some care, 
due to the complicated structure of the collision operator. 
The derivation of the strong formulation of the equation \eqref{eq:weakF} and, specifically, of the collision operator $\mathcal{Q} (F,F)$ will be performed in the next Section \ref{ssec:strongF}.

\subsection{Strong formulation  of the problem in self-similar variables 
}\label{ssec:strongF}

In this Section we derive the  strong formulation  of the problem arising from \eqref{eq:weakF}. More precisely, we will arrive at the following equation for $F=F(\xi,\tau)$
\begin{equation}
\label{eq:ss_strong2} 
    \partial_{\tau}F-\frac{1}{a}\partial_{\xi_1}(\xi_1F)-\left(\frac{1}{a}-1\right)\partial_{\xi_2}(\xi_2F)-\left(\frac{1}{a}-1\right)\partial_{\xi_3}(\xi_3F)
   +  \partial_{\xi_1}(\xi_2 F )={Q}[F,F] (\xi)
\end{equation}
where ${Q}[F,F](\xi):=\delta(\xi_1) 
     {Q}^+[F,F](\tilde{\xi})-{Q}^-[F,F](\xi)$ with  $\xi=(\xi_1,\tilde \xi)\in \mathbb{R}^3$, $\tilde \xi=(\xi_2,\xi_3)\in \mathbb{R}^2$. More precisely, the gain operator reads as
\begin{align}\label{eq:ss_strong2_gain}
 & {Q}^+[F,F](\tilde{\xi})=  \frac {32}{2^a} t^{a-1}\int_{\R} dx \int_{\mathbb{R}^2}d\tilde{y}\, F\left( x_1+  |\tilde \xi|\left(t \sin(2\theta)\right)^{-1}, x_2+y_2,x_3+y_3, \tau\right) \times \notag \\& \qquad \qquad \qquad \qquad \qquad \times F\left(x_1-|\tilde \xi|\left(t \sin(2\theta)\right)^{-1},x_2-y_2,x_3-y_3,\tau \right) 
 \frac{  \sin(\theta) |\sin(2\theta)|^{a-1}}{|\tilde\xi|^{a+1}}  
 \end{align}
 where $\tilde{y}=(y_2,y_3)\in \mathbb{R}^2$ and the loss term reads as
 \begin{equation}
  {Q}^-[F,F](\xi)=2F(\xi,\tau) \int_{S^2} d\omega\int_{\R}d\xi_{\ast} \,|\xi_1-\xi_{1,\ast}|^{-a} F(\xi_{\ast},\tau)\, .\label{eq:ss_strong2_loss}
\end{equation}
\medskip

Notice that, as expected from the weak formulation, this equation satisfies the mass conservation property. This will be checked by means of a direct computation in Appendix \ref{ssec:massF}. \smallskip 

\noindent\textbf{Derivation of the strong formulation \eqref{eq:ss_strong2}-\eqref{eq:ss_strong2_loss} from the weak formulation \eqref{eq:weakF}.} 
Starting from the weak formulation \eqref{eq:weakF}, i.e.
\begin{align*}
&\partial_{\tau}\left(\int_{\R}\varphi(\xi,\tau)F(d \xi,\tau)\right)\notag \\ & =
    \int_{\R} F(d\xi,\tau)\left[\partial_\tau \varphi -\frac{1}{a}\xi_1\partial_{\xi_1}\psi -\left(\frac{1}{a}-1\right)\xi_2\partial_{\xi_2}\psi-\left(\frac{1}{a}-1\right)\xi_3\partial_{\xi_3}\psi 
    +\xi_2 \partial_{\xi_1}\psi \right] (\xi,\tau) \notag\\&
    \quad + \int_{S^2}\int_{\R}\int_{\R} \frac{1}{|\xi_1-\xi_{1,\ast}|^{a}} F(d\xi, \tau)F(d\xi_{\ast},\tau)   \notag \\& \quad \times \Big[  
\varphi(t \left(\xi_{1,\ast}-\xi_{1}\right)\omega_1 \left(I-e_1 \otimes e_1\right)(\omega))+ 
\varphi(t \left(\xi_{1}-\xi_{1,\ast}\right)\omega_1 \left(I-e_1 \otimes e_1\right)(\omega)) 
-\varphi(\xi_\ast)-\varphi(\xi) \Big] 
 d\omega \,
\end{align*} 
we can obtain the strong formulation of the equation choosing in \eqref{eq:weakF}  test functions with the form $\varphi(\xi,\tau)=\delta(\xi-\xi_0)$ with $\xi_0\in\R$ arbitrary. With this choice we obtain $\int_{\R}\varphi(\xi,\tau)F(d \xi,\tau)=F(\xi_0,\tau)$ and thus  $\partial_{\tau}\left(\int_{\R}\varphi(\xi,\tau)F(d \xi,\tau)\right)=\partial_{\tau} F(\xi_0,\tau)$.  
We now integrate by parts in the drift terms (cf. the first integral on the right hand side of \eqref{eq:weakF}). It then  follows 
\begin{align*}
\int_{\R} \varphi(\xi)\Bigg[\partial_\tau F(d\xi,\tau) -\frac{1}{a}&  \partial_{\xi_1}(\xi_1 F(d\xi,\tau)) -\left(\frac{1}{a}-1\right)\partial_{\xi_2}(\xi_2F(d\xi,\tau))\notag \\& 
-\left(\frac{1}{a}-1\right)\partial_{\xi_1}(\xi_1 F(d\xi,\tau)) 
    + \partial_{\xi_1}(\xi_2 F(d\xi,\tau)) \Bigg]   \notag 
\end{align*}
Choosing $\varphi(\xi,\tau)=\delta(\xi-\xi_0)$ and  using that the derivative of $\vp$ with respect to the the time variable $\tau$ is zero,  we arrive at
$$
 \frac{1}{a}\partial_{\xi_1}(\xi_1 F(d\xi,\tau)) + \left(\frac{1}{a}-1\right)\partial_{\xi_2}(\xi_2
F(d\xi,\tau))+ \left(\frac{1}{a}-1\right)\partial_{\xi_1}(\xi_1 F(d\xi,\tau)) 
    - \partial_{\xi_1}(\xi_2 F(d\xi,\tau)) \Bigg|_{\xi=\xi_0}\, .
$$
We are now left with the collision term. We first consider the contribution coming from the loss term. Using that $\varphi(\xi,\tau)=\delta(\xi-\xi_0)$ we obtain
\begin{align*}
-\int_{S^2}\int_{\R}\int_{\R} \frac{1}{|\xi_1-\xi_{1,\ast}|^{a}} F(d\xi, \tau)F(d\xi_{\ast},\tau) 
\Big[  \varphi(\xi_\ast)+\varphi(\xi) \Big] 
 d\omega \, = -2 F(\xi_0, \tau) \int_{S^2} \int_{\R} \frac{1}{|\xi_1-\xi_{1,\ast}|^{a}} F(d\xi_{\ast},t)d\omega
\end{align*} 
where we used the symmetry of the collision operator with respect to $\xi\to\xi_{\ast}$. 
We now compute the  gain term. We first notice that 
$$\left(I-e_1 \otimes e_1\right)(\omega)=\left(0, P_2(\omega),P_3(\omega) \right)^T $$
where $P_i$, $i=2,3$ denote the components of the orthogonal projection in the vectors $e_2,\, e_3$. Then, 
$$\varphi(t \left(\xi_{1,\ast}-\xi_{1}\right)\omega_1 \left(I-e_1 \otimes e_1\right)(\omega))=\varphi(0,t \left(\xi_{1,\ast}-\xi_{1}\right)\omega_1 P_2(\omega), t\left(\xi_{1,\ast}-\xi_{1}\right)\omega_1 P_3(\omega))\, .$$
Arguing similarly to the previous case, using the symmetry of the collision operator as well as the factorization properties of the Dirac delta, i.e. $\delta^{(3)}(x-x_0)=\delta(x_1-x_{1,0})\delta(x_2-x_{2,0})\delta(x_3-x_{3,0})$ with $x\in \R$, $x_0=(x_{0,1},x_{0,2},x_{0,3})\in\R$, we arrive at 
\begin{align}
&\int_{S^2}\int_{\R}\int_{\R} \frac{1}{|\xi_1-\xi_{1,\ast}|^{a}} F(d\xi, \tau)F(d\xi_{\ast},\tau) \times \nonumber \\&
 \qquad \qquad \quad \times \Big[  
\varphi(t \left(\xi_{1,\ast}-\xi_{1}\right)\omega_1 \left(I-e_1 \otimes e_1\right)(\omega))+ \varphi(t \left(\xi_{1}-\xi_{1,\ast}\right)\omega_1 \left(I-e_1 \otimes e_1\right)(\omega)) 
\Big]d\omega \notag \\& 
=2\int_{S^2}\int_{\R}\int_{\R} \frac{1}{|\xi_1-\xi_{1,\ast}|^{a}} F(d\xi, \tau)F(d\xi_{\ast},\tau)    
\varphi(t \left(\xi_{1,\ast}-\xi_{1}\right)\omega_1 \left(I-e_1 \otimes e_1\right)(\omega))\, d\omega \notag \\&
=2\int_{S^2}\int_{\R}\int_{\R} \frac{1}{|\xi_1-\xi_{1,\ast}|^{a}} F(d\xi, \tau)F(d\xi_{\ast},\tau)   
\, \delta^{(3)}(t \left(\xi_{1,\ast}-\xi_{1}\right)\omega_1 \left(I-e_1 \otimes e_1\right)(\omega)-\xi_0)\, d\omega \notag \\& 
=2\delta(\xi_{0,1})\int_{S^2}\int_{\R}\int_{\R} \frac{1}{|\xi_1-\xi_{1,\ast}|^{a}} F(d\xi, \tau)F(d\xi_{\ast},\tau)  
\, \times \nonumber \\& 
\qquad \qquad \quad \times \delta(t \left(\xi_{1,\ast}-\xi_{1}\right)\omega_1 P_2(\omega)-\xi_{0,2}) 
\, \delta(t \left(\xi_{1,\ast}-\xi_{1}\right)\omega_1 P_3(\omega)-\xi_{0,3})\, d\omega \, . \label{eq:strongFcoll}
\end{align}
We now change variables, working in the center of mass system coordinate frame, and set $X=\frac{\xi+\xi_{\ast}}{2}$, $Y=\frac{\xi-\xi_{\ast}}{2}$ so that $\xi=X+Y$, $\xi_{\ast}=X-Y$ and $d\xi d\xi_{\ast}=8\,dXdY$. The integral on the left hand side of \eqref{eq:strongFcoll} then becomes 
\begin{align}\label{eq:strongFcoll1}
& \frac{16}{2^a}\delta(\xi_{0,1})\int_{S^2}d\omega \int_{\R}dX \int_{\R}  dY\, \frac{ F(X+Y, \tau)F(X-Y,\tau) }{|Y_{1}|^{a}}
\, \delta\left(- {2}t \,\omega_1 Y_1 P_2(\omega)-\xi_{0,2}\right) 
\, \delta\left(- {2}t \,\omega_1 Y_1   P_3(\omega)-\xi_{0,3}\right)\notag \\& 
=\frac{16}{2^a}\delta(\xi_{0,1})\int_{S^2}d\omega \int_{\R}dX \int_{\R}  dY \frac{ F(X+Y, \tau)F(X-Y,\tau) }{|Y_{1}|^{a}} 
\, \delta\left({2}t \,\omega_1 Y_1 P_2(\omega)-\xi_{0,2}\right) 
\, \delta\left( {2}t \,\omega_1 Y_1   P_3(\omega)-\xi_{0,3}\right)  \, 
\end{align}
where we changed $Y\to -Y$ in  the last step. Following now the same strategy used in the linear setting for the case of the Rayleigh-Boltzmann equation in \cite{MNV25b} (see Section 4.2.1) we consider first the integral w.r.t. $\omega\in S^2$. We will denote as $\tilde{\xi}_0=(\xi_{0,2},\xi_{0,3})$ and $\tilde \xi_0= |\tilde \xi_0|(\cos(\psi_0),\sin(\psi_0))$. It has been proved in \cite{MNV25b} that 
\begin{align}\label{eq:Diracidentities}
&\int_{S^2} \, d\omega\, \delta\left(t \, \xi_1 \, \omega_1 P_2(\omega)-\xi_{0,2}\right) 
\, \delta\left( t \,\xi_1 \, \omega_1   P_3(\omega)-\xi_{0,3}\right) \nonumber \\&=
\int_0^{2\pi} d\vp \int_0^{\pi} d\theta  \frac{4}{\vert \xi_1\vert t^2\sin^2(2\theta)} \left(\delta(\vp-\psi_0)\delta\left(\xi_1-\frac{2|\tilde \xi_0|}{t\sin(2\theta)}\right)+\delta(\vp-\psi_0+\pi)\delta\left(\xi_1+\frac{2|\tilde \xi_0|}{t\sin(2\theta)}\right)\right)\,.
\end{align}
Applying \eqref{eq:Diracidentities}  we obtain
\begin{align} \label{eq:strongFcoll1_omega}
 & \int_{S^2}  \delta\left({2}t \, Y_1 \,\omega_1 P_2(\omega)-\xi_{0,2}\right) 
\, \delta\left({2}t \,Y_1\,\omega_1   P_3(\omega)-\xi_{0,3}\right)\, d\omega \nonumber \\& 
= 
\int_0^{2\pi} d\vp \int_0^{\pi} d\theta \frac{4 \sin\theta}{2|Y_1| t^2\sin^2(2\theta)}\Bigg[ 
\left(\delta(\vp-\psi_0)\delta\left({2Y_1}-\frac{2|\tilde \xi_0|}{t\sin(2\theta)}\right)+\delta(\vp-\psi_0+\pi)\delta\left({2Y_1}+\frac{2|\tilde \xi_0|}{t\sin(2\theta)}\right)\right)
\Bigg]\nonumber\\& 
= 
\int_0^{2\pi} d\vp \int_0^{\pi} d\theta \frac{ \sin\theta}{|Y_1| t^2\sin^2(2\theta)}\Bigg[ 
\left(\delta(\vp-\psi_0)\delta\left(Y_1-\frac{|\tilde \xi_0|}{t\sin(2\theta)}\right)+\delta(\vp-\psi_0+\pi)\delta\left(Y_1+\frac{|\tilde \xi_0|}{t\sin(2\theta)}\right)\right)
\Bigg]
\end{align}
Substituting  \eqref{eq:strongFcoll1_omega} into \eqref{eq:strongFcoll1} we arrive at 
\begin{align}\label{eq:strongFcoll2}
 \frac{16}{2^a}\delta(\xi_{0,1})\int_{\R}dX \int_{\R}  dY & \, \frac{ F(X+Y, \tau)F(X-Y,\tau) }{|Y_{1}|^{a}} 
\, \int_0^{2\pi} d\vp \int_0^{\pi} d\theta \frac{ \sin\theta}{|Y_1| t^2\sin^2(2\theta)}\notag \\& 
\Bigg[ 
\delta(\vp-\psi_0)\delta\left(Y_1-\frac{|\tilde \xi_0|}{t\sin(2\theta)}\right)  +\delta(\vp-\psi_0+\pi)\delta\left(Y_1+\frac{|\tilde \xi_0|}{t\sin(2\theta)}\right) \Bigg]\, .
\end{align}
We now compute the full integral above. Noticing that $ \int_0^{2\pi} d\vp\, \delta(\vp-\psi_0)= \int_0^{2\pi} d\vp\,\delta(\vp-\psi_0+\pi)=1$ and using the symmetry of the integral, it follows that 
\begin{align}\label{eq:strongFcoll3}
& \frac{32}{2^a}\delta(\xi_{0,1})\int_{\R}dX \int_{\mathbb{R}^2}  dY_2\,dY_3\,\int_0^{\pi} d\theta  \Bigg[ F\left(X_1+\frac{|\tilde \xi_0|}{t\sin(2\theta)}  ,X_2+Y_2,X_3+Y_3,\tau\right) \times \notag\\& \qquad   F\left(X_1-\frac{|\tilde \xi_0|}{t\sin(2\theta)},X_2-Y_2,X_3-Y_3,\tau\right) 
  \frac{ \sin\theta} {t^2\sin^2(2\theta)} \left(\frac{ t\, \vert \sin(2\theta)\vert }{|\tilde \xi_0| }\right)^{a+1} \Bigg]\notag \\& 
  =\frac{32}{2^a}\, t^{a-1}\,\delta(\xi_{0,1})\int_{\R}dX \int_{\mathbb{R}^2}  dY_2\,dY_3\,\int_0^{\pi} d\theta  \Bigg[ F\left(X_1+\frac{|\tilde \xi_0|}{t\sin(2\theta)}  ,X_2+Y_2,X_3+Y_3,\tau\right) \times \notag\\& \qquad   F\left(X_1-\frac{|\tilde \xi_0|}{t\sin(2\theta)},X_2-Y_2,X_3-Y_3,\tau\right) 
  \frac{ \sin\theta \, \vert \sin(2\theta)\vert^{a-1} }{  |\tilde \xi_0| ^{a+1}} \Bigg]\notag
\end{align}
where $\tilde \xi=(\xi_2, \xi_3)$ and this gives the strong form of the gain term in \eqref{eq:ss_strong2_gain}.

\section{Reduction to a two-dimensional problem} 
\label{sec:redupb}

To analyze the long-time behaviour of the solution $F=F(\xi_1,\xi_2,\xi_3,\tau)$ to \eqref{eq:ss_strong2}, we will first consider a reduced two-dimensional problem, integrating out the $\xi_3$ variable.  Afterwards, we will determine the asymptotic behavior of $F(\xi_1, \xi_2, \xi_3, \tau)$ (see Section \ref{sec:asymF} below) exploiting the asymptotics of the solution $G = G(\xi_1, \xi_2, \tau)$ of the reduced problem, obtained herein. We note that the function 
$G$ itself satisfies a kinetic equation. More specifically, we define
\begin{equation}\label{def:G} 
    G(\xi_1,\xi_2, \tau) =\int_{\Rl} F(\xi_1,\xi_2,\xi_3,\tau)\, d \xi_3\, .
\end{equation}
Notice that, with a slight abuse of notation we will still denote by $\xi$ the two dimensional variable, i.e. $\xi=(\xi_1,\xi_2)\in \mathbb{R}^2$. The precise meaning of $\xi$ will be made clear by the context in each specific case. 

\subsection{Evolution equation for the reduced distribution $G$} 
\label{sse:eqG}
Assuming that $F$ decays sufficiently rapidly at infinity, the reduced distribution $G$ defined as in \eqref{def:G} then satisfies
\begin{equation}
 \label{eq:G} 
    \partial_{\tau}G-\frac{1}{a}\partial_{\xi_1}(\xi_1 G)-\left(\frac{1}{a}-1\right)\partial_{\xi_2}(\xi_2 G)
   +  \partial_{\xi_1}(\xi_2 G )=  {K}^+[G,G](\xi)- {K}^-[G,G](\xi), \ \, G=G(\xi,\tau), \ \xi\in \mathbb{R}^2
\end{equation}
where the gain term is 
\begin{align}\label{eq:G_gainbis}
  {K}^+[G,G](\xi)& 
 =\frac {8 \sqrt{2}}{2^a \, t}\delta(\xi_1)  \int_{\mathbb{R}} dx_1 \int_{\mathbb{R}} d\eta_2 \int_{\mathbb{R}}d\zeta_2 \int_{\frac{ |\xi_2|}{t}}^{+\infty} dz \, G\left( x_1+ z, \eta_2, \tau\right) G\left(x_1- z,\zeta_2,\tau \right) \frac{1}{z^{a+1}}\Phi\left(\frac{ |\xi_2|}{tz}\right)  
  \end{align}
with 
\begin{equation}\label{def:auxPhi}
     \Phi\left(s\right):= \int_{ s}^{1}   \frac{1}{\sqrt{1-\rho^2}} 
      \frac{  \sqrt{1 - \sqrt{1 - \rho^2} } + \sqrt{1 + \sqrt{1 - \rho^2}}} { \sqrt{\rho^2-s^2}} d\rho  , \quad 0<s<1  \,
\end{equation} 
and the loss term  reads 
\begin{align}\label{eq:G_loss}
  {K}^-[G,G](\xi)& 
  = G(\xi,\tau)\Lambda(\xi_1,\tau) \, 
\end{align}
with 
\begin{equation}
\label{def:LAMBDA}
\Lambda(\xi_1,\tau):=8\pi \int_{\mathbb{R}^2} d\eta \,|\xi_1-\eta_{1}|^{-a} G(\eta,\tau), \quad \xi=(\xi_1,\xi_2)\in \mathbb{R}^2 \, .
\end{equation}

The justification of \eqref{eq:G_loss}-\eqref{eq:G_gainbis} starting from \eqref{eq:ss_strong2} can be found in Appendix \ref{ss:justificationGeq}. We further notice that this equation satisfies the mass conservation property, as it will be shown in Appendix \ref{ssec:massG}.
\medskip

 \subsection{Reformulation of \eqref{eq:G} as a boundary value problem} \label{ssec:bdvalue}

 We consider the equation satisfied by the reduced distribution $G$, i.e. \eqref{eq:G}, and observe that it can be rewritten as a suitable boundary value problem. 
 More precisely, the boundary value we study is the following:
 \begin{align} 
 \label{eq:evG}
    \partial_{\tau}G&-\frac{1}{a}\partial_{\xi_1}(\xi_1G)-\left(\frac{1}{a}-1\right)\partial_{\xi_2}(\xi_2G) +  \partial_{\xi_1}(\xi_2 G )=  - {K}^-[G,G](\xi),  
    \\&
    G(0^+,\xi_2,\tau)= e^{-2\tau} H\left(\frac {\xi_2}{e^{\tau} }, \tau\right), \ \ \tau=\log(t)   
\label{eq:bdVG} \end{align}
where, denoting by $u:=\frac {\xi_2}{e^\tau}$, the function $H$ is given by 
\begin{equation}
\label{def:H}  
H(u,\tau):= \frac {8 \cdot \sqrt{2} }{2^a \, u} \! \!  \int_{\mathbb{R}} \! \! dx_1 \! \!\int_{\mathbb{R}}\! \! d\eta_2 \! \!\int_{\mathbb{R}} \! \!d\zeta_2  \! \!\int_{ |u|}^{+\infty} \! \! dz \, G\left( x_1+ z, \eta_2, \tau\right) G\left(x_1- z,\zeta_2,\tau \right) \frac{1}{z^{a+1}}\Phi\left(\frac{|u|}{z}\right)
\end{equation}
with $\Phi\left(s\right)$ defined as in \eqref{def:auxPhi} 
and where ${K}^-[G,G](\xi)$ is the loss term given as in \eqref{eq:G_loss}. 
\smallskip 

 To gain insight into the onset of  \eqref{eq:evG}, \eqref{eq:bdVG}, note that in the structure of the gain term $K^+[G,G]$ in \eqref{eq:G_gainbis}, due to the presence of $\delta(\xi_1)$, results in a discontinuity, or jump, at $\xi_1=0$. In fact, to obtain \eqref{eq:evG}, \eqref{eq:bdVG} we start from \eqref{eq:G} and, 
 assuming that $G(0^+, \xi_2, \tau)=0$ we compute  $G(0^+, \xi_2, \tau)$ and obtain 
\begin{equation}\label{eq:Gbdval} 
G(0^+\! \!, \xi_2, \tau)=
\frac{1}{\xi_2}\frac {8 \cdot \sqrt{2} }{2^a \, t} \! \!  \int_{\mathbb{R}} \! \! dx_1 \! \!\int_{\mathbb{R}}\! \! d\eta_2 \! \!\int_{\mathbb{R}} \! \!d\zeta_2  \! \!\int_{\frac{ |\xi_2|}{t}}^{+\infty} \! \! dz \, G\left( x_1+ z, \eta_2, \tau\right) G\left(x_1- z,\zeta_2,\tau \right) \frac{1}{z^{a+1}}\Phi\left(\frac{|\xi_2|}{tz}\right), \ \   \xi_2>0
\end{equation}
Notice that, due to the symmetry of the problem, we can restrict to the region where $\xi_2$ take positive values, i.e. $\xi_2>0$. We further observe that this formula has a self-similar structure since it can be rewritten as 
$$ G(0^+,\xi_2,\tau)  = \frac 1 {e^{2\tau}} H\left(\frac {\xi_2}{e^{\tau}}, \tau\right), \ \  {\tau}={\log(t)}\, ,$$
where $H$ is defined as in \eqref{def:H}. 
\smallskip

We observe that, as a matter of fact, the function $H\left(u, \tau\right)$ introduced in \eqref{eq:bdVG} and appearing in the equation above, must be modified by means of some logarithmic corrections. More precisely, the function $H\left(u, \tau\right)$ will have the approximated form 
$$H\left(u, \tau\right)\sim \lambda( \tau) \overline{H}\left(\frac {u}{\varepsilon(\tau)}\right)$$
where 
$$\lambda( \tau) \approx \frac 1 \tau \qquad \text{and}\qquad \varepsilon(\tau)\approx\frac 1 {\tau^{\frac 1 a}}\,.$$
The reason for the onset of the function $\varepsilon(\tau)$ is that most of the collisions between particles take place for particles with sizes of order $|\xi_1| \approx \varepsilon(\tau)$ and, as a result of these collision,  they acquire a size of order $|\xi| \approx \varepsilon(\tau)e^\tau$. On the other hand, the function $\lambda(\tau)$ will be determined using the mass conservation for the reduced distribution $G$ as well as the fact that $G$ behaves for large values of $|\xi|$ as $G(\xi,\tau)\approx \frac 1 {|\xi_1|}\delta(\xi_1-\xi_2)$ and that the mass of $G$ is concentrated in a region of size $\varepsilon(\tau)\lesssim |\xi|\lesssim \varepsilon(\tau)e^\tau$. As a consequence of this, a more accurate description of the self-similarity properties of $G(0^+,\xi_2,\tau)$ is given by $$G(0^+,\xi_2,\tau)\sim \frac 1 {e^{2\tau}(\varepsilon(\tau))^2}H\left(\frac {\xi_2}{\varepsilon(\tau)e^{\tau}},\tau\right)\,.$$


\subsection{The system of characteristics associated with  \eqref{eq:evG}, \eqref{eq:bdVG}}\label{ssec:char}

We consider the system of characteristics associated to \eqref{eq:evG}, \eqref{eq:bdVG}. It reads as
\begin{align}\label{eq:systemchar}
    \frac{d \xi_1}{d\tau} & = \xi_2  -\frac{1}{a} \xi_1; \notag \\
    \frac{d \xi_2}{d\tau} & = -\left(\frac{1}{a}-1\right) \xi_2; \notag \\
    \frac{d G}{d\tau} & =  \left(\frac{2}{a}-1\right)G- K^-[G,G]
\end{align}
with initial data, at time  $\tau_0$,  
$\xi_{1}(\tau_0)=0, \,\xi_{2}(\tau_0)=\xi_{2,0}$. The system above  
can be solved for $0 \leq \tau_0 < \tau$. We first take into account the first two equations in \eqref{eq:systemchar} and, by integrating, we obtain 
\begin{align}
    \xi_1(\tau) &=\xi_{2,0}  e^{-\left(\frac{1}{a}-1\right)(\tau-\tau_0)}- \xi_{2,0} e^{-\frac{1}{a}(\tau-\tau_0)}= \xi_{2,0} e^{-\frac{1}{a}(\tau-\tau_0)} \left(e^{(\tau-\tau_0)}-1 \right)\label{eq:xi1}; \\
    \xi_2(\tau) & = \xi_{2,0} e^{-\left(\frac{1}{a}-1\right)(\tau-\tau_0)}\label{eq:xi2}
\end{align}
which yields
\begin{equation} \label{eq:xi1xi2}
 \frac{\xi_1}{\xi_2}=1- e^{- (\tau-\tau_0)}  \text{ or, equivalently, } e^{(\tau-\tau_0)}= \frac{\xi_2}{\xi_2-\xi_1}\,
\end{equation}
where we are not writing explicitly the dependence of $\xi_1, \, \xi_2$ on $\tau$. 
We further observe that equations  \eqref{eq:xi1}, \eqref{eq:xi2} retain the same structure in both linear and nonlinear frameworks, and thus coincide with the corresponding equations derived in \cite{MNV25b}  for the linear Boltzmann-Rayleigh equation.  

Returning now to the third equation in \eqref{eq:systemchar}, i.e., the equation for $G$, we recall that $K^-[G,G]=\Lambda(\xi,\tau)\, G(\xi,\tau)$ 
with $\Lambda$ as in \eqref{def:LAMBDA} 
 and thus 
\begin{equation}
G(\xi_1,\xi_2,\tau) = G(0^+,\xi_{2,0},\tau_0) \exp\left(\left(\frac{2}{a}-1\right)(\tau-\tau_0) 
- \int_{\tau_0}^{\tau} \Lambda(\xi_1(s),\tau)
ds\right)\, .\label{eq:solChar}
\end{equation}
Inserting \eqref{eq:bdVG} with $\tau=\tau_0$ in \eqref{eq:solChar} we arrive at   
\begin{align}
G(\xi_1,\xi_2,\tau) = &  {e^{-2\tau_0}} H\left(\frac {\xi_{2,0}}{e^{\tau_0}}, \tau_0\right) \,\exp\left(\left(\frac{2}{a}-1\right)(\tau-\tau_0) 
- \int_{\tau_0}^{\tau} \Lambda(\xi_1(s),\tau) ds\right)\,  \nonumber \\&
=  {e^{-2\tau_0}} H\left(\frac {\xi_{2,0}}{e^{\tau_0}}, \tau_0\right) \,  \left(\frac{\xi_2}{\xi_2-\xi_1}\right)^{\left(\frac{2}{a}-1\right)} 
\exp\left(
-  \int_{\tau_0}^{\tau} \Lambda(\xi_1(s),\tau) ds\right)\,
\label{eq:solChar2}
\end{align}
where, in the second identity, we used \eqref{eq:xi1xi2}.

\section{Rationale behind the asymptotic form of the solution derived in this paper}\label{sec:heuristic}  

The aim of this description is to provide, heuristically, the general idea behind the asymptotic behavior of $G(\xi,\tau)$, $\xi=(\xi_1,\xi_2)\in \mathbb{R}^2$,  solution to the reduced model \eqref{eq:evG}, \eqref{eq:bdVG} that will be obtained in this paper. The explicit asymptotics for $G$ will then allow to obtain the asymptotic behavior of the original solution $F(\xi, \tau)$, $\xi=(\xi_1, \xi_2,\xi_3)\in\mathbb{R}^3$, using the behavior of $G$. In this section we just summarize the main ideas behing the asymptotics that we obtain, the details will be presented in the subsequent sections. 
We start recalling the equation \eqref{eq:evG} satisfied by $G$, namely 
\begin{equation*}
    \partial_{\tau}G - \frac{1}{a}\partial_{\xi_{1}}(\xi_{1}G) - (\frac{1}{a}-1)\partial_{\xi_{2}}(\xi_{2}G) + \partial_{\xi_{1}}(\xi_{2}G) = K^{+}[G,G](\xi) - K^{-}[G,G](\xi) 
\end{equation*}
where the gain and loss term (cf.\eqref{eq:G_gainbis}-\eqref{eq:G_loss}) respectively read
\begin{equation*}
    K^{+}[G,G](\xi) = \frac{8\sqrt{2}}{2^{a}t}\delta(\xi_{1})\int_{\mathbb{R}}dx_{1}\int_{\mathbb{R}}d\eta_{2}\int_{\mathbb{R}}d\zeta_{2}\int_{\frac{ |\xi_{2}|}{t}}^{+\infty}dz G(x_{1}+z,\eta_{2},\tau)G(x_{1}-z,\zeta_{2},\tau)\frac{1}{z^{a+1}}\Phi\left(\frac{ |\xi_{2}|}{tz}\right)  \, ,
\end{equation*}
\begin{equation*}
    K^{-}[G,G](\xi) = 8\pi G(\xi, \tau) \int_{\mathbb{R}^{2}}d\eta |\xi_{1}-\eta_{1}|^{-a}G(\eta, \tau), \quad \xi=(\xi_{1}, \xi_{2}) \in \mathbb{R}^{2} \, ,
\end{equation*}
with the auxiliary function $\Phi(s)$ given as follows:
$$
    \Phi(s) = \int_{s}^{1}\frac{1}{\sqrt{1-\rho^{2}}}\frac{\sqrt{1-\sqrt{1-\rho^{2}}}+\sqrt{1+\sqrt{1-\rho^{2}}}}{\sqrt{\rho^{2}-s^{2}}}d\rho, \quad 0 < s < 1\, .
$$

We notice that the gain term \eqref{eq:G_gainbis} can be removed from the equation and replaced by a boundary condition at $\xi_{1}=0^+$, $\xi_2>0$.  We now introduce the following decomposition of $\mathbb{R}^2$ that will be useful later: 
\begin{align}
& \mathcal{R}_1:=\{\xi_{1}>0, \, \xi_{2}>0\}\,,\qquad    \mathcal{R}_2:=\{\xi_{1}<0, \, \xi_{2}<0 \}\nonumber\\&
\mathcal{R}_3:=\{\xi_1>0,\, \xi_2<0\}\, , \qquad \mathcal{R}_4:=\{ \xi_{1}<0, \, \xi_{2}>0\}\,,
\end{align}
and also 
\begin{equation}
  \mathcal{R}_{1,2}:=\{0 < \xi_{2}<\xi_{1}\}\, , \qquad \mathcal{R}_{2,2}:=\{\xi_{1}< \xi_{2}<0\}\,.
\end{equation}
  
We then observe  that  we can restrict ourselves to examine the behavior of the solution in the region $\mathcal{R}_1$. Indeed, the function $G(\xi_1,\xi_2,\tau)$ is negligible in the regions 
 $\mathcal{R}_3\,\, \mathcal{R}_4 $
 since the characteristic curves associated with \eqref{eq:evG} in $\{\xi_1=0\}$ do not enter those regions. 
 The values of $G$ in $\mathcal{R}_2$ can be obtained from the values $\mathcal{R}_1$ due to the symmetry condition $G(-\xi,\tau)=G(\xi,\tau)$, $\xi\in \mathbb{R}^2$. 
Moreover, the characteristic curves of \eqref{eq:evG} starting at $\xi_1=0$ do not enter the regions $\mathcal{R}_{1,2}$ and $\mathcal{R}_{2,2}$. Therefore, $G$ will be negligible also in $\mathcal{R}_{1,2}\,,\, \mathcal{R}_{2,2}$. 

The solution $G$ is then constructed by means of the evolution by characteristics combined with jumps describing the collisions. These collisions are relevant for $|\xi| \approx \varepsilon(\tau)$ where $\varepsilon(\tau)$ will be determined later.  

The mechanism of collisions sends particles from the region $|\xi| \approx \varepsilon(\tau)$ to the line $\{\xi_{1}=0\}$. This will provide a relation between the values of $G(\xi,\tau)$ for $|\xi| \approx \varepsilon(\tau)$ and $G(0^+,\xi_2,\tau)$ with $|\xi_2|$ of order $\varepsilon(\tau)e^\tau$, given that, due to the form of the gain operator $K^{+}[G,G]$ (cf. \eqref{eq:G_gain})
the collisions between two particles with $|\xi| \approx \varepsilon(\tau)$ yield two particles with $|\xi| $ of order $ \varepsilon(\tau)e^\tau=\varepsilon(\tau) t$, one of them with $\xi_2>0$ and the other with $\xi_2<0$. It is relevant to notice that most of the contribution to the 
$L^1$ norm of the function $G(0^+,\xi_{2},\tau)$ comes from the region $|\xi_{2}| \lesssim \varepsilon(\tau)e^{\tau}$.
Additionally, the form of $K^{+}[G,G]$ implies that $G(0^+, \xi_{2}, \tau)$ has a self-similar form:
\begin{equation}\label{eq:Gss}
    G(0^+, \xi_{2}, \tau) = \frac{\lambda(\tau)}{(\varepsilon(\tau)e^{\tau})^{2}} \overline{H}\left(\frac{\xi_{2}}{\varepsilon(\tau)e^{\tau}}\right)  
\end{equation}
where $\overline{H}$ and $\lambda(\tau)$ will be determined later.

It is possible to compute $G(\xi,\tau)$ in the region $\mathcal{R}_{1,1}\cup \mathcal{R}_{2,1}:=\{0<\xi_{1}< \xi_{2}\}\cup \{\xi_{2}< \xi_{1}<0\}$ integrating by characteristics the equation 
\begin{equation*}
    \label{eq:reducedbdG}
    \partial_{\tau}G - \frac{1}{a}\partial_{\xi_{1}}(\xi_{1}G) - (\frac{1}{a}-1)\partial_{\xi_{2}}(\xi_{2}G) + \partial_{\xi_{1}}(\xi_{2}G) =- K^{-}[G,G](\xi) \,.
\end{equation*}

We observe that the loss term $K^-[G,G]$ yields a negligible contribution for $|\xi| \gg \varepsilon(\tau)$. Actually, the characteristic scale $\varepsilon(\tau)$ will be defined as the scale such that for values $|\xi| \approx \varepsilon(\tau)$ the term $K^-[G,G]$ yields a contribution to $G$ of order one. 
As it might be expected the values of $G(\xi,\tau)$ with 
$|\xi| \approx \varepsilon(\tau)$ yield also the main contribution to $K^+[G,G]$. This contribution would be relevant in the region where $|\xi_2|\lesssim \varepsilon(\tau)e^\tau$. 

Integrating by characteristics \eqref{eq:reducedbdG} we obtain an approximation for $G(\xi,\tau)$ in $\{0<|\xi_{1}|< |\xi_{2}|\}$. The characteristics associated with \eqref{eq:reducedbdG}, starting at $\xi_1=0$, $\xi_2=\xi_{2,0}$ at time $\tau = \tau_{0}$ (see \eqref{eq:xi1xi2}), converge towards the diagonal $\{\xi_{1}=\xi_{2}\}$ for $\tau - \tau_{0}\gg 1$.

This allows the approximation:
\begin{equation}\label{eq:apprG2}
G(\xi,\tau)\sim a\,\lambda(\tau_{0}(\tau, \xi)) \frac{\delta(\xi_{1} - \xi_{2})}{|\xi_{2}|} \sigma\left(\frac{\xi_{2}}{\varepsilon(\tau)}, \tau\right) \quad \text{as}\quad \tau\to \infty\,,
\end{equation}
where $\lambda(\cdot)$ is as in \eqref{eq:Gss} and $\tau_{0}(\tau, \xi)$ is the time at which a characteristic curve that approaches to the set $\{\xi_{1}=\xi_{2}\}$ at time $\tau$ starts its motion at the line $\xi_1=0$, for some $\xi_2=\xi_{2,0}$. Notice that $\tau_{0}(\tau, \xi)<\tau$. In addition, for the values of $\xi$ for which $G(\xi,\tau)$ yields a relevant contribution we have that $\tau_{0}(\tau, \xi) \to \infty$ as $\tau\to \infty$. 

The function $\sigma\left(s, \tau\right)$ with $s=\frac{\xi_{2}}{\varepsilon(\tau)}$ appearing in \eqref{eq:apprG2} is approximately equal to one if $\varepsilon(\tau) \ll|\xi_2| \ll \varepsilon(\tau)e^\tau $. 
This function provides a cutoff for $G$ for the values $ |\xi_2| \approx \varepsilon(\tau)$ and $ |\xi_2| \gtrsim \varepsilon(\tau)e^\tau$. The cutoff for  $ |\xi_2| \gtrsim \varepsilon(\tau)e^\tau$ is due to the fact that $G(0,\xi_{2,0},\tau)$ is negligible if $|\xi_{2,0}| \gtrsim \varepsilon(\tau)e^\tau$ and, as a consequence, integrating by characteristics, it follows that $G(\xi,\tau)$ is negligible for $|\xi| \gtrsim \varepsilon(\tau)e^\tau$. Furthermore, the function $\sigma\left(s, \tau\right)$ with $s=\frac{\xi_{2}}{\varepsilon(\tau)}$, provides also a cutoff for $G(\xi,\tau)$ in the region where $|\xi_2| \lesssim \varepsilon(\tau)$. This cutoff is due to the fact that collisions between particles with sizes $|\xi| \approx \varepsilon(\tau)$ become relevant. It is possible to show that the function $\sigma\left(s, \tau\right)$ can be approximated by means of the solution ${\sigma}\left(s \right)$ of an integro-differential equation that will be analysed in Section \ref{ss:cutoff}. 
It turns out that for values $|\xi|\approx \varepsilon(\tau)$, the function $\tau_{0}(\tau, \xi)$ in \eqref{eq:apprG2} can be approximated as  
$$\tau_{0}(\tau, \xi) \sim (1-a)\tau \quad \text{as} \quad \tau \to \infty\,.$$
Then, for $|\xi|\approx \varepsilon(\tau)$, \eqref{eq:apprG2} becomes 
\begin{equation}\label{eq:apprG3}
G(\xi,\tau)\sim a \,\lambda((1-a)\tau) \frac{\delta(\xi_{1} - \xi_{2})}{|\xi_{2}|} {\sigma}\left(\frac{\xi_{2}}{\varepsilon(\tau)}\right) \quad \text{as}\quad \tau\to \infty\,,
\end{equation}

Using \eqref{eq:apprG3} and the form of the gain term $K^+[G,G]$\eqref{eq:G_gainbis}  we can thus derive \eqref{eq:Gss}. More precisely, we obtain 
\begin{equation}
   G(0^+, \xi_{2}, \tau) \sim a^2\,\frac{(1-a)}{a}\frac{(\lambda((1-a)\tau)^{2}}{(\varepsilon(\tau))^{a}} 
{\frac{e^{-2\tau}}{(\varepsilon(\tau))^2}}
   \overline{H}\left(\frac{\xi_{2}}{\varepsilon(\tau)e^{\tau}}\right) \label{eq:apprG4}
\end{equation}

Combining \eqref{eq:Gss} with \eqref{eq:apprG4} we finally obtain
\begin{equation}\label{eq:lambdaeps}
\lambda(\tau) \sim a \, (1-a)\,\frac{(\lambda((1-a)\tau)^{2}}{(\varepsilon(\tau))^a} \quad \text{as} \quad \tau \to \infty\, .
\end{equation}
This equation will allow to obtain $\varepsilon(\tau)$ once $\lambda(\tau)$ has been obtained. 

On the other hand, we can derive the asymptotic behaviour of $\lambda(\tau)$ as $\tau \to \infty$ using the mass conservation property of $G$ given as in \eqref{eq:apprG2}. More precisely, using the assumption $\int_{\mathbb{R}^2} G(\xi,\tau) d\xi=1$, combined with \eqref{eq:apprG2} it follows
\begin{equation}\label{eq:asymGmass}
 a   \int_{\mathbb{R}^2} \lambda(\tau_{0}(\tau, \xi)) \frac{\delta(\xi_{1} - \xi_{2})}{|\xi_{2}|} \sigma\left(\frac{\xi_{2}}{\varepsilon(\tau)}, \tau\right) d\xi \sim 1 \quad \text{as}\quad \tau\to \infty\,
\end{equation}
where the factor $2$ is due to the symmetry $\xi \to -\xi$. Moreover, using the fact that $\sigma$ yields a cutoff for values of $\xi$ such that $|\xi|\lesssim \varepsilon(\tau)$, $|\xi|\gtrsim \varepsilon(\tau)e^\tau$
we obtain from \eqref{eq:asymGmass} that 
\begin{equation}\label{eq:asymGmass2}
   2a \int_{\varepsilon(\tau)}^{\varepsilon(\tau)e^\tau} \frac{\lambda(\tau_{0}(\tau, \xi)) }{|\xi_{2}|}  d\xi_2 \sim 1 \quad \text{as}\quad \tau\to \infty\,
\end{equation}
We use the ansatz 
\begin{equation}
\label{eq:ansatzlambda}
\lambda(\tau)\sim \frac{C_0}{\tau}\ \text{as} \ \tau\to \infty 
\end{equation}
where $C_0>0$ is a numerical constant to be determined later.  
The relation between the time $\tau$ and the time $\tau_0$, is given as in \eqref{eq:xi1xi2}, namely 
\[  \tau_0 = \tau-\log\left(\frac{\xi_2}{\xi_2-\xi_1}\right)=  \tau-\log(\xi_2) - \log(\xi_2 - \xi_1) \]
Moreover, we notice that, using the thickness of the Dirac delta in \eqref{eq:apprG2}, it is possible to show (see \eqref{eq:apprGdelta1} below)  that  
\begin{equation}\label{eq:lambdathick}
 \lambda(\tau_0(\tau,\xi)) = \lambda\left(  \tau-\log(\xi_2) - \log(\xi_2 - \xi_1) \right)=\lambda\left((1-a)\tau+a \log(\xi_2)\right).
\end{equation}

In order to determine $C_0$  we consider  \eqref{eq:asymGmass2} and, using \eqref{eq:ansatzlambda} combined with \eqref{eq:lambdathick}, the l.h.s. of \eqref{eq:asymGmass2} becomes 
\begin{align}
\label{eq:asymGmass3}   
& 2a \int_{\varepsilon(\tau)}^{\varepsilon(\tau)e^\tau} \frac{\lambda(\tau_{0}(\tau, \xi)) }{|\xi_{2}|}  d\xi_2 \sim 
2a \,C_0\int_{\varepsilon(\tau)}^{\varepsilon(\tau)e^\tau} \frac{1 }{|\xi_{2}|\left( (1-a)\tau+a \log(\xi_2)\right)}  d\xi_2 \nonumber \\& 
\sim 2a \,C_0\int_{1}^{e^\tau} \frac{1 }{|\zeta |\left( (1-a)\tau+a \log(\varepsilon(\tau)+a \log (\zeta))\right)}d\zeta 
\end{align}
where in the second line we used the change of variables $\xi_2=\varepsilon(\tau)\zeta$. We observe that the term $a \log(\varepsilon(\tau)$ in the denominator of \eqref{eq:asymGmass3} is negligible as $\tau \to \infty$ and thus  \eqref{eq:asymGmass3} reduces to 
\begin{align}
\label{eq:asymGmass4}   
&  2a \, C_0\int_{1}^{e^\tau} \frac{1 }{|\zeta |\left( (1-a)\tau+a \log (\zeta))\right)}d\zeta =\frac {2a \,C_0}{\tau} \int_{1}^{e^\tau} \frac{1 }{|\zeta |\left( (1-a)+\frac{a \log (\zeta))}{\tau}\right)}d\zeta \nonumber \\& 
=\frac {2a \,C_0}{\tau} \int_{0}^{\tau} \frac{e^\theta}{e^\theta \left( (1-a)+\frac{a \theta}{\tau})\right)}d\theta  
= \frac {2a \,C_0}{\tau}\int_{0}^{1} \frac{\tau}{\left( (1-a)+{a \beta}\right)}d\beta \nonumber \\&  = 2a \, C_0 \int_{0}^{1} \frac{1}{\left( (1-a)+{a \beta}\right)}d\beta  = 2a \, C_0 \log \left(\frac{(1-a)+a}{1-a}\right)=  2a \, C_0 \log \left(\frac{1}{1-a}\right) 
\end{align}
where in the second identity we set $\zeta=e^\theta$ with $d\zeta=e^\theta d\theta$
and in the third one we set $\theta/\tau=\beta$ with $d\theta=\frac 1 \tau d\beta$. 
Combining \eqref{eq:asymGmass4} with \eqref{eq:asymGmass2} we then obtain
$ 2a \, C_0 \log \left(\frac{1}{1-a}\right) \sim 1$ which then yields 
\begin{equation}\label{def:Co}
 C_0 =\frac 1 {2a \,\log \left(\frac{1}{1-a}\right)}.
\end{equation}
We observe that all the dyadic intervals, or scales, for $|\xi_2|\in[\varepsilon(\tau),\varepsilon(\tau)e^\tau]$ contribute
to the mass of $G(\xi,\tau)$. On the other hand, $\lambda(\tau_{0}(\tau, \xi))$ has a nontrivial dependence on $\xi_2$ in these scales. In particular, we remark that we cannot use neither the approximation $\lambda(\tau_{0})=\frac {C_0}{\tau}$ or $\lambda(\tau_{0})=\frac {C_0}{(1-a)\tau}$ in the whole integration domain, we  can use instead that $\lambda(\tau)=\frac {C_0} {\tau}$ but the contribution of $\lambda(\tau_0(\tau,\xi))$ depends on $\xi_2$: if $\xi_2\sim e^{\theta \tau}$ then $\lambda(\tau_0(\tau,\xi))$  depends on $\theta$. 
On the contrary, in the gain term $K^+[G,G]]$ we can use the approximation $\tau_0\sim (1-a)\tau$, $\lambda(\tau_0) \sim \lambda((1-a)\tau)$, in order to obtain the boundary value $G(0^+,\xi_2,\tau)$.

\section{Asymptotics for the solution $G$ of the reduced 2d model} \label{sec:asymG}
 
We consider the ``reduced'' evolution equation satisfied by $G=G(\xi,\tau), \ \xi\in \mathbb{R}^2$ that reads
\begin{equation}
 \label{eq:G_a} 
    \partial_{\tau}G-\frac{1}{a}\partial_{\xi_1}(\xi_1 G)-\left(\frac{1}{a}-1\right)\partial_{\xi_2}(\xi_2 G)
   +  \partial_{\xi_1}(\xi_2 G )=  {K}^+[G,G](\xi)- {K}^-[G,G](\xi), 
\end{equation}
where the gain term is 
\begin{align}\label{eq:G_gainbis_a}
  {K}^+[G,G](\xi)& 
 =\frac {8\sqrt{2}} {2^a \, t}\delta(\xi_1)  \int_{\mathbb{R}} dx_1 \int_{\mathbb{R}} d\eta_2 \int_{\mathbb{R}}d\zeta_2 \int_{\frac{ |\xi_2|}{t}}^{+\infty} dz \, G\left( x_1+ z, \eta_2, \tau\right) G\left(x_1- z,\zeta_2,\tau \right) \frac{1}{z^{a+1}}\Phi\left(\frac{|\xi_2|}{tz}\right)  
  \end{align}
with 
\begin{equation}\label{def:auxPhi_bis}
     \Phi\left(s\right):= \int_{ s}^{1}   \frac{1}{\sqrt{1-\rho^2}} 
      \frac{  \sqrt{1 - \sqrt{1 - \rho^2} } + \sqrt{1 + \sqrt{1 - \rho^2}}} { \sqrt{\rho^2-s^2}} d\rho  , \quad 0<s<1  \,
\end{equation} 
and the loss term  reads 
\begin{align}\label{eq:G_loss_bis}
{K}^-[G,G](\xi)&  =8\pi G(\xi,\tau)\int_{\mathbb{R}^2} d\eta \,|\xi_1-\eta_{1}|^{-a} G(\eta,\tau), \quad \xi=(\xi_1,\xi_2)\in \mathbb{R}^2 \, .
\end{align}
\smallskip

\noindent \textbf{Ansatz 1:} we assume that in the gain term the main contribution is due to a region where the measure $G$ can be approximated as the product of a Dirac delta measure and a measure $Q$ which depends only on the second component of the velocity variable, specifically  
\begin{equation}\label{eq:ansatzG2}
    G(\xi_1,\xi_2,\tau)=\delta(\xi_1-\xi_2)Q(\xi_2,\tau)
\end{equation} 
where $Q$ is symmetric in $\xi_2$ and it approaches to zero in the cutoff region where $|\xi_2|\approx \varepsilon (\tau)$ that will be computed in detail later. More precisely, the heuristic argument in Section \ref{sec:heuristic} would suggest to look for functions $Q$ with the following functional dependence
\begin{equation}\label{eq:Q}
    Q(\xi_2,\tau)= \overline{Q}\left(r, \tau\right), \quad r= \frac{\xi_2}{\varepsilon(\tau)}
\end{equation} 
and such that $\overline{Q}(r, \tau)\to 0$ as $r\to 0$ and $\overline{Q}(r, \tau)\to 1$ as $r\to \infty$. Moreover, we notice that $\overline{Q}(r, \tau)$ has an additional cutoff when $r\approx e^\tau$. Therefore, we consider solutions $G$ with the following form
\begin{equation}\label{eq:ansatzG}
    G(\xi_1,\xi_2,\tau)=\delta(\xi_1-\xi_2)\overline{Q}\left(\frac{\xi_2}{\varepsilon(\tau)}, \tau\right)
\end{equation} 
\smallskip

Using \eqref{eq:ansatzG2} in \eqref{eq:G_gainbis_a} we obtain 
\begin{align}\label{eq:G_gain_a}
  {K}^+[G,G](\xi)& 
 =\frac {8  \sqrt{2}} {2^a \, t}\delta(\xi_1)  \int_{\mathbb{R}} dx_1 \int_{\mathbb{R}} d\eta_2 \int_{\mathbb{R}}d\zeta_2 \int_{\frac{ |\xi_2|}{t}}^{+\infty} dz \, \delta(x_1+ z-\eta_2)\,Q(\eta_2,\tau)\nonumber\\& \qquad \times \ \delta(x_1- z-\zeta_2)\,Q(\zeta_2,\tau) \frac{1}{z^{a+1}}\Phi\left(\frac{|\xi_2|}{tz}\right)  \nonumber\\&
 = \frac {8 \sqrt{2}} {2^a \, t}\delta(\xi_1)  \int_{\mathbb{R}} d\eta_2 \int_{\mathbb{R}}d\zeta_2 \int_{\frac{ |\xi_2|}{t}}^{+\infty} dz \, Q(\eta_2,\tau)\, \delta(\eta_2-\zeta_2-2z)\, Q(\zeta_2,\tau) \frac{1}{z^{a+1}}\Phi\left(\frac{ |\xi_2|}{tz}\right)  \nonumber\\&
 = \frac {8  \sqrt{2}} {2^a \, t}\delta(\xi_1)  \int_{\mathbb{R}} d\eta_2 \int_{\frac{ |\xi_2|}{t}}^{+\infty} dz \, Q(\eta_2,\tau) \, Q(\eta_2-2z,\tau) \frac{1}{z^{a+1}}\Phi\left(\frac{ |\xi_2|}{tz}\right)  \nonumber\\& 
 =\frac {8  \sqrt{2}} {2^a \, t}\delta(\xi_1)  \int_{\mathbb{R}} d\eta_2 \int_{\frac{|\xi_2|}{t}}^{+\infty} dz \, Q(\eta_2+z,\tau) \, Q(\eta_2-z,\tau) \frac{1}{z^{a+1}}\Phi\left(\frac{|\xi_2|}{tz}\right) \nonumber\\& 
 =\frac {8  \sqrt{2}} {2^a \, t}\delta(\xi_1)  \int_{\frac{ |\xi_2|}{t}}^{+\infty} dz \,\frac{1}{z^{a+1}}\Phi\left(\frac{ |\xi_2|}{tz}\right) \int_{\mathbb{R}} d\eta_2 \, Q(\eta_2+z,\tau) \, Q(\eta_2-z,\tau) \, .
  \end{align}
Notice that, due to the symmetry of the problem, we can restrict to the region where $\xi_2$ take positive values, i.e. $\xi_2>0$. 

Similarly, using the ansatz \eqref{eq:ansatzG2}, the loss term \eqref{eq:G_loss_bis} reduces to 
\begin{align}\label{eq:G_loss_bis_a}
  {K}^-[G,G](\xi)& 
  =8\pi G(\xi_1,\xi_2,\tau) \, \int_{\mathbb{R}}\int_{\mathbb{R}} d\eta_1 d\eta_2 \,|\xi_1-\eta_{1}|^{-a} G(\eta_{1},\eta_{2},\tau) \nonumber \\&
  = 8\pi G(\xi_1,\xi_2,\tau) \, \int_{\mathbb{R}}\int_{\mathbb{R}} d\eta_1 d\eta_2 \,|\xi_1-\eta_{1}|^{-a} \delta(\eta_1-\eta_2)Q(\eta_{2},\tau) \nonumber \\& 
  = 8\pi G(\xi_1,\xi_2,\tau) \, \int_{\mathbb{R}}d\eta_2 \,|\xi_1-\eta_{2}|^{-a} Q(\eta_{2},\tau) = \Lambda(\xi_1,\tau) G(\xi_1,\xi_2,\tau)
\end{align}
with $\Lambda(\xi_1,\tau)$ given as in \eqref{def:LAMBDA}, namely
\begin{equation*}
\Lambda(\xi_1,\tau):=8\pi  \, \int_{\mathbb{R}}d\eta_2 \,|\xi_1-\eta_{2}|^{-a} Q(\eta_{2},\tau)\,. 
\end{equation*}

\medskip 

\paragraph{Reformulation of the Boltzmann equation \eqref{eq:G_a} as a boundary value problem.}

We have seen in Section \ref{ssec:bdvalue} that we can reformulate the Boltzmann equation \eqref{eq:G} satisfied by $G$ as the following boundary value problem 
\begin{align}
 \label{eq:BV_G_a} 
   & \partial_{\tau}G-\frac{1}{a}\partial_{\xi_1}(\xi_1 G)-\left(\frac{1}{a}-1\right)\partial_{\xi_2}(\xi_2 G)
   +  \partial_{\xi_1}(\xi_2 G )=   - \Lambda(\xi_1,\tau)G(\xi_1,\xi_2,\tau),  \\&
   G(0^+\! \!, \xi_2, \tau)=
\frac{1}{\xi_2}\frac {8 \cdot \sqrt{2} }{2^a \, t} \! \!  \int_{\mathbb{R}} \! \! dx_1 \! \!\int_{\mathbb{R}}\! \! d\eta_2 \! \!\int_{\mathbb{R}} \! \!d\zeta_2  \! \!\int_{\frac{ |\xi_2|}{t}}^{+\infty} \! \! dz \, G\left( x_1+ z, \eta_2, \tau\right) G\left(x_1- z,\zeta_2,\tau \right) \frac{1}{z^{a+1}}\Phi\left(\frac{ |\xi_2|}{tz}\right), \ \   \xi_2>0 \,.\label{eq:bdG_a}
\end{align} 
Using the ansatz \eqref{eq:ansatzG2}, the boundary value $G(0^+\! \!, \xi_2, \tau)$, \eqref{eq:bdG_a} then becomes 
\begin{equation}\label{eq:Gbdval_a} 
G(0^+\! \!, \xi_2, \tau)= \frac{1}{\xi_2}\frac {8 \cdot \sqrt{2} }{2^a \, t}\int_{\frac{ |\xi_2|}{t}}^{+\infty} dz \,\frac{1}{z^{a+1}}\Phi\left(\frac{ |\xi_2|}{tz}\right) \int_{\mathbb{R}} d\eta_2 \,  {Q(\eta_2+z,\tau) \, Q(\eta_2-z,\tau) }
\end{equation}
Notice that in some of the formulas we will keep $t$ or $e^\tau$ indistinctly, in order to discharge the notation.
\smallskip

Moreover, the heuristic argument in Section \ref{sec:heuristic} suggests that we can write the boundary value \eqref{eq:bdG_a} in the self-similar form 
\begin{equation}
\label{def:Gbd_selfsim}
G(0^+,\xi_2,\tau)= \frac{e^{-2\tau}}{(\varepsilon(\tau))^2 } H\left(\frac {\xi_2}{\varepsilon(\tau)e^{\tau} }, \tau\right), \ \ \tau=\log(t)  
\end{equation}  
where we recall that $t=e^{\tau}$ and, denoting by $u=\frac {\xi_2}{\varepsilon(\tau) e^\tau}>0$, the function $H$ is given by 
\begin{align}
\label{def:Hbis}  
H(u,\tau)& := \frac {8\cdot \sqrt{2} }{2^a \, u} \! \!  \int_{\mathbb{R}} \! \! dx_1 \! \!\int_{\mathbb{R}}\! \! d\eta_2 \! \!\int_{\mathbb{R}} \! \!d\zeta_2  \! \!\int_{\varepsilon(\tau) |u|}^{+\infty} \! \! dz \, G\left( x_1+ z, \eta_2, \tau\right) G\left(x_1- z,\zeta_2,\tau \right) \frac{1}{z^{a+1}}\Phi\left(\frac{ \varepsilon(\tau)|u|}{z}\right) \nonumber \\& 
=\frac {8 \cdot \sqrt{2} }{2^a \, u} \int_{ \varepsilon(\tau)|u|}^{+\infty} dz \,\frac{1}{z^{a+1}}\Phi\left(\frac{\varepsilon(\tau)|u|}{ z}\right) \int_{\mathbb{R}} d\eta_2 \,  {Q(\eta_2+z,\tau) \, Q(\eta_2-z,\tau) }
\end{align} 
for $u>0$.

\paragraph{Solving \eqref{eq:BV_G_a}, \eqref{eq:bdG_a} with the method of characteristics.} 

We recall the system of characteristics introduced in Section \ref{ssec:char} associated to \eqref{eq:BV_G_a}, \eqref{eq:bdG_a} which reads as
\begin{align}\label{eq:systemchar_1}
    \frac{d \xi_1}{d\tau} & = \xi_2  -\frac{1}{a} \xi_1;  \\
    \frac{d \xi_2}{d\tau} & = -\left(\frac{1}{a}-1\right) \xi_2; \label{eq:systemchar_2} \\
    \frac{d G}{d\tau} & =  \left(\frac{2}{a}-1\right)G- K^-[G,G] = \left(\frac{2}{a}-1\right)G-\Lambda(\xi_1,\tau) G
\label{eq:systemchar_3}
\end{align}
with initial data, at time  $\tau_0$,  
$\xi_{1}(\tau_0)=0, \,\xi_{2}(\tau_0)=\xi_{2,0}$. The system above can be solved for $0 \leq \tau_0 < \tau$. We first take into account the first two equations in the system of characteristics, namely 
\eqref{eq:systemchar_1}, \eqref{eq:systemchar_2} and, by integrating, we obtain 
\begin{align}
    \xi_1(\tau) &=\xi_{2,0}  e^{-\left(\frac{1}{a}-1\right)(\tau-\tau_0)}- \xi_{2,0} e^{-\frac{1}{a}(\tau-\tau_0)}= \xi_{2,0} e^{-\frac{1}{a}(\tau-\tau_0)} \left(e^{(\tau-\tau_0)}-1 \right)\label{eq:xi1}; \\
    \xi_2(\tau) & = \xi_{2,0} e^{-\left(\frac{1}{a}-1\right)(\tau-\tau_0)}\label{eq:xi2}
\end{align}
which yields
\begin{equation} \label{eq:xi1xi2_1}
 \frac{\xi_1}{\xi_2}=1- e^{- (\tau-\tau_0)}  \text{ or, equivalently, } e^{(\tau-\tau_0)}= \frac{\xi_2}{\xi_2-\xi_1}\,
\end{equation}
where we are not writing explicitly the dependence of $\xi_1, \, \xi_2$ on $\tau$. 
We further observe that equations  \eqref{eq:xi1}, \eqref{eq:xi2} retain the same structure in both linear and nonlinear framework, and thus coincide with the corresponding equations derived in \cite{MNV25b}  
for the linear Boltzmann-Rayleigh equation. 

\medskip

Therefore, integrating by characteristics, we obtain the following expression for $G(\xi_1,\xi_2,\tau)$:
\begin{align}
    \label{eq:Gchar_a}
    G(\xi_1,\xi_2,\tau) &=   \frac{e^{-2\tau_0}} {(\varepsilon(\tau_0))^2} H\left(\frac { \xi_{2,0}}{\varepsilon(\tau_0) e^{\tau_0}}, \tau_0\right) \,\exp\left(\left(\frac{2}{a}-1\right)(\tau-\tau_0) \right) \exp\left(- \int_{\tau_0}^{\tau}\Lambda (\xi_1(s), s) ds\right) \nonumber \\&=
\frac{e^{-2\tau_0}}{(\varepsilon(\tau_0))^2} H\left( \frac{e^{-\tau}  }{\varepsilon(\tau_0)}\frac{\xi_2^{1+\frac 1 a} }{(\xi_2-\xi_1)^\frac 1 a}, \tau_0\right) \,  \left(\frac{\xi_2}{\xi_2-\xi_1}\right)^{\left(\frac{2}{a}-1\right)} 
\exp\left(
-  \int_{\tau_0}^{\tau}\Lambda (\xi_1(s), s) ds\right)\,
\end{align}
with $e^{(\tau-\tau_0)}= \frac{\xi_2}{\xi_2-\xi_1}$ and we recall that $\xi_{2,0}=\xi_2(\tau_0)$, $\tau >\tau_0 \geq 0$. 
\bigskip

We now observe that the time $\tau_0=\tau_0(\tau,\xi)$ introduced above is such that (cf. \eqref{eq:xi1xi2_1})
\begin{equation}\label{def:tau0in}
    \tau_0=\tau- \log \left( \frac{\xi_2}{\xi_2-\xi_1}\right) =\tau-\log \left( {\xi_2}\right)+\log\left(\xi_2-\xi_1\right)
\end{equation} 
and in the different regions can be approximated as follows: 
\begin{align}\label{eq:tau0coll}
    & \tau_0\sim (1-a)\tau \quad \text{when} \ \ |\xi|\approx  \varepsilon(\tau), \\& 
     \tau_0
     \sim (1-a) \tau + a\log(\xi_2), \quad \text{when} \ \ \varepsilon(\tau)\leq |\xi| \leq \varepsilon(\tau)e^\tau \label{eq:tau0regions} 
\end{align}
We will see later that \eqref{eq:tau0coll} holds true in the collision region, i.e. when $|\xi|\approx \varepsilon(\tau)$ and  
\eqref{eq:tau0regions}  in the region where most of the mass of the distribution function $G$ concentrates.  
\medskip

\noindent \textbf{Ansatz 2:} We now assume that the dependence of $H$ on $\tau>0$ is sufficiently slow and that we can approximate 
\begin{equation}\label{eq:ansatzH}
H(u, \tau) \sim \lambda(\tau)\overline{H}(u)   , \quad \text{with} \ \ u=\frac {\xi_2}{\varepsilon(\tau)e^\tau }
\end{equation}
where, in order to determine $\overline{H}$ and $\lambda(\tau)$ uniquely, we will impose that $\overline{H}$ 
satisfies 
\begin{equation}
    \label{eq:Hbar_norm}
\int_0^\infty \, u \, \overline{H}(u)  \, du =1 \, .\end{equation}

Using now \eqref{eq:ansatzH} in \eqref{eq:Gchar_a} 
\begin{align}
    \label{eq:Gchar_b}
    G(\xi_1,\xi_2,\tau) &=   
    \frac{e^{-2\tau_0}} {(\varepsilon(\tau_0))^2} \,\lambda\left( \tau_0\right) \, \overline{H}\left( \frac {e^{-\tau} }{\varepsilon(\tau_0)} \frac{\xi_2^{1+\frac 1 a} }{(\xi_2-\xi_1)^\frac 1 a}\right) \, \left(\frac{\xi_2}{\xi_2-\xi_1}\right)^{\left(\frac{2}{a}-1\right)} 
\exp\left(
-  \int_{\tau_0}^{\tau}\Lambda (\xi_1(s), s) ds\right)\nonumber\\&
=
\lambda\left( \tau_0\right) \, \frac{e^{a\tau} (\varepsilon(\tau_0))^a}{\xi_2^{2+ a}} \, \left(\frac{e^{-\tau}\xi_2^{(1+\frac 1 a)}}{ \varepsilon(\tau_0)(\xi_2-\xi_1)^{\frac 1 a}}\right)^{\left(2+a\right)} \overline{H}\left( \left( \frac {e^{-a\tau} }{(\varepsilon(\tau_0))^a}  \frac{\xi_2^{1+ a} }{(\xi_2-\xi_1)} \right)^{\frac 1 a} \right) \, 
\exp\left(-  \int_{\tau_0}^{\tau}\Lambda (\xi_1(s), s) ds\right)\nonumber\\&
=\lambda\left( \tau_0\right) \, \frac{e^{a\tau} (\varepsilon(\tau_0))^a}{\xi_2^{2+ a}} \, \left(\frac{e^{-a\tau}\xi_2^{1+a}}{ (\varepsilon(\tau_0))^a(\xi_2-\xi_1)}\right)^{\frac{\left(2+a\right)}{a}} \overline{H}\left( \left( \frac {e^{-a\tau} }{(\varepsilon(\tau_0))^a}  \frac{\xi_2^{1+ a} }{(\xi_2-\xi_1)} \right)^{\frac 1 a} \right) \, 
\exp\left(-  \int_{\tau_0}^{\tau}\Lambda (\xi_1(s), s) ds\right)
\end{align} 
where in the second identity we used that 
$e^{(\tau-\tau_0)}= \frac{\xi_2}{\xi_2-\xi_1}$ which yields $e^{-2\tau_0}= e^{-2\tau} \left(\frac{\xi_2}{\xi_2-\xi_1}\right)^2$ and the identity 
$\left(\frac{\xi_2}{\xi_2-\xi_1}\right)^2 \, \left(\frac{\xi_2}{\xi_2-\xi_1}\right)^{\left(\frac{2}{a}-1\right)} 
= \left(\frac{\xi_2^{\frac 1 a}}{(\xi_2-\xi_1)^{\frac 1 a}}\right)^{\left(2+a\right)} =\frac 1 {\xi_2^{2+ a}} \, \left(\frac{\xi_2^{(1+\frac 1 a)}}{(\xi_2-\xi_1)^{\frac 1 a}}\right)^{\left(2+a\right)}$. 

We now define 
\begin{equation}\label{def:functionU}
U(\zeta)
=\frac 1 {\zeta^{\frac {2+a} a}} \overline{H}\left(\frac 1 {\zeta^{\frac 1 a}}\right)\, \quad \forall \, \zeta \in \mathbb{R}
\end{equation}
such that $\int_{0}^{+\infty} U(\zeta)d\zeta=U_0< \infty$.  
Notice that using the normalization condition \eqref{eq:Hbar_norm}, we can compute explicitly the value of $U_0$ and prove that it is finite. More precisely we have 
\begin{equation}\label{def:intU0}
    U_0= \int_{0}^{+\infty}  U(\zeta)\, d\zeta = \int_{0}^{+\infty} \frac 1 {\zeta^{\frac {2+a} a}} \overline{H}\left(\frac 1 {\zeta^{\frac 1 a}}\right) \, d \zeta=
    a \int_{0}^{+\infty} y \overline{H}\left(y\right) dy =a 
< \infty
\end{equation}
where we changed variables, setting $y= \frac 1 {\zeta^{\frac {1} a}}$ with $ dy= - \frac 1 a \frac {d\zeta} {\zeta^{\frac {1}{a}+1}}$. Therefore \begin{equation}
\label{eq:valueUo}
    U_0=a\,.
\end{equation}
We observe that using \eqref{def:functionU} we can rewrite \eqref{eq:Gchar_b} as 
\begin{align}
    \label{eq:Gchar_b2}
    G(\xi_1,\xi_2,\tau) &=   
\lambda\left( \tau_0\right) \, \frac{e^{a\tau}(\varepsilon(\tau_0))^a }{\xi_2^{2+ a}} \,  U\left(e^{a\tau}(\varepsilon(\tau_0))^a \frac{(\xi_2-\xi_1)}{\xi_2^{1+ a} } \right) \, 
\exp\left(-  \int_{\tau_0}^{\tau}\Lambda (\xi_1(s), s) ds\right) \, .
\end{align} 
Moreover, we observe that we  expect to have 
\begin{equation}\label{eq:approxU}
  (\varepsilon(\tau_0))^a e^{a\tau}\, U((\varepsilon(\tau_0))^a e^{a\tau} z)  \approx U_0\delta(z) \quad \text{as} \ \ \tau \to \infty \, \ \ 
\end{equation}
in the sense of distributions.  
Relying on the approximation \eqref{eq:approxU} which is expected to be valid in the region where $\xi_2 \leq \varepsilon(\tau)e^\tau$ we arrive at 
\begin{align}
    \label{eq:Gchar_c}
    G(\xi_1,\xi_2,\tau) &=   
 \frac{\lambda\left( \tau_0\right)}{\xi_2^{2+ a}} \,  U_0 \, 
 \delta\left( \frac{\xi_2-\xi_1}{\xi_2^{1+ a} } \right) \, 
\exp\left(-  \int_{\tau_0}^{\tau}\Lambda (\xi_1(s), s) ds\right) \nonumber \\& = 
\frac{\lambda\left( \tau_0\right)}{\xi_2} \,  U_0 \,  \delta\left( \xi_2-\xi_1 \right) \, \exp\left(-  \int_{\tau_0}^{\tau}\Lambda (\xi_1(s), s) ds\right) \,.
\end{align} 
Comparing this formula with \eqref{eq:ansatzG}, namely the factorization of the solution $G(\xi_1,\xi_2,\tau)=Q(\xi_2,\tau) \delta(\xi_1-\xi_2)$ proposed in \textbf{Ansatz 1} we find
\begin{equation}\label{eq:formulaQ}
    Q(\xi_2,\tau) =\frac{\lambda\left( \tau_0\right)}{\xi_2} \,  U_0 \,   \exp\left(-  \int_{\tau_0}^{\tau}\Lambda (\xi_1(s), s) ds\right) , \quad 0<\xi_2\leq \varepsilon(\tau_0) e^{\tau}
\end{equation}  
where $U_0=\int_{0}^{+\infty} U(\zeta)d\zeta < \infty$. We observe that the cutoff in the region $\xi_2\approx \varepsilon(\tau_0) e^{\tau}$ is due to the fact the function $H$ decreases very quickly when $\xi_{2,0} \geq (\varepsilon(\tau_0))e^\tau$.   
\smallskip

The next step consists in computing the integral appearing in the exponential in \eqref{eq:formulaQ}. In order to do this we recall the formula for $\xi_1(s)$ given by \eqref{eq:systemchar_2} and use it in the integral on the r.h.s. of \eqref{eq:formulaQ}. More precisely, we have 
\begin{equation}\label{eq:intLambda}
    \int_{\tau_0}^{\tau}\Lambda (\xi_1(s), s) ds 
=8\pi  \, \int_{\tau_0}^{\tau} ds \int_{\mathbb{R}}d\eta_2 \,|\xi_1(s)-\eta_{2}|^{-a} Q(\eta_{2},s) 
\end{equation}

\paragraph{Analysis of the function $Q$ and derivation of the cutoff problem.}
We expect 
\begin{equation}\label{def:B}
Q(\eta_{2},\tau)\sim \frac {\lambda(\tau_0) } {|\eta_2|}\, U_0\, \Sigma(\eta_2,\tau)
\end{equation}
where $\Sigma(\eta_2,\tau)$ should give the cutoff for $|\eta_2|\approx \varepsilon(\tau)$.  

We now compare \eqref{eq:formulaQ} with \eqref{def:B} and we obtain 
\begin{equation}\label{eq:sigma2}
\Sigma(\xi_1,\tau) = \exp \left( -\int_{\tau_0}^{\tau}\Lambda (\xi_1(s), s) ds \right)
\end{equation}

Recalling the system of characteristics \eqref{eq:systemchar_1}-\eqref{eq:systemchar_3} we notice that, since $\xi_1(s)$ decreases exponentially in $s$ it is possible to approximate the characteristics with $\xi_1 \approx \xi_2$ and, furthermore, the cutoff function $\Sigma$ has relevant modification in the region where $\xi_1 \approx \varepsilon(\tau)$.  Moreover, we have 
\begin{equation}\label{eq:approx_xi1}
    \xi_1(s)\sim \xi_1 e^{\left(\frac 1 a -1\right)(\tau-s)}, \qquad \xi_1(\tau)=\xi_1
\end{equation}

Then, from \eqref{eq:intLambda}, using 
\eqref{def:B} we obtain
\begin{equation}
\label{eq:Lambda2} 
\Lambda(\xi_1(s),\tau)\sim 8\pi  \, U_0 \int_{\mathbb{R}}d\eta_2 \,|\xi_1(s)-\eta_{2}|^{-a}   \frac {\lambda(\tau_0)} {|\eta_2|}\Sigma(\eta_2,s)= 8\pi \lambda(\tau_0) \,  U_0 \int_{\mathbb{R}}d\eta_2 \, \frac 1 {|\xi_1(s)-\eta_{2}|^{a}|\eta_2|}\Sigma(\eta_2,s) \,  
\end{equation}
with $a<1$. Observe that, as explained in the heuristic argument in Section \ref{sec:heuristic} in this regime we have $\tau_0\sim (1-a)\tau$.  Therefore, $\tau-\tau_0\sim a \tau \gg 1 $ as $\tau\to \infty$. 

We now integrate \eqref{eq:Lambda2} w.r.t. the time variable $s$ and we obtain 
\begin{align}
\label{eq:Lambda3} 
\int_{\tau_0}^{\tau}\Lambda(\xi_1(s),\tau)ds  &= 8\pi  \, U_0 \lambda(\tau_0)\int_{\tau_0}^{\tau} ds \int_{\mathbb{R}}d\eta_2 \,|\xi_1(s)-\eta_{2}|^{-a}   \frac {1} {|\eta_2|}\Sigma(\eta_2,s)\nonumber\\& 
\sim
8\pi  \, U_0 \lambda(\tau_0)\int_{\tau_0}^{\tau} ds \int_{\mathbb{R}}d\eta_2 \,| \xi_1 e^{\left(\frac 1 a -1\right)(\tau-s)}-\eta_{2}|^{-a}   \frac {1} {|\eta_2|}\Sigma(\eta_2,s)   
\end{align}
where we used \eqref{eq:approx_xi1}. 
Due to the exponential decay in $s$ of $\xi_1(s)$ and the fact that the contribution to the region where $\tau-s \gg 1$ is exponentially small we can approximate the time integral $\int_{\tau_0}^{\tau} ds [\dots]\approx\int_{-\infty}^{\tau} ds [\dots] $ and hence we arrive at 
\begin{equation}
\label{eq:Lambda3bis} 
\int_{\tau_0}^{\tau}\Lambda(\xi_1(s),\tau)ds   \sim  
8\pi  \, U_0 \lambda(\tau_0)\int_{-\infty}^{\tau} ds \int_{\mathbb{R}}d\eta_2 \,|\xi_1 e^{\left(\frac 1 a -1\right)(\tau-s)}-\eta_{2}|^{-a}   \frac {1} {| \eta_2|}\Sigma(\eta_2,s)  
\end{equation}
Notice that since the function $\Sigma(\eta_2,s) $ is changing slowly in time, as we will check a posteriori, and the main contribution will come from the region where $s\approx \tau$ we can use that $\Sigma(\eta_2,s)$ is in a quasi stationary regime and thus $\Sigma(\eta_2,s)\sim\Sigma(\eta_2,\tau)$ and obtain 
\begin{align}
\label{eq:Lambda4} 
&\int_{\tau_0}^{\tau}\Lambda(\xi_1(s),\tau)ds   \sim
8\pi  \, U_0 \lambda(\tau_0)\int_{-\infty}^{\tau} ds \int_{\mathbb{R}}d\eta_2 \,|\xi_1 e^{\left(\frac 1 a -1\right)(\tau-s)}-\eta_{2}|^{-a}   \frac {1} {| \eta_2|}\Sigma(\eta_2,\tau) \nonumber \\&
=  8\pi  \, U_0 \lambda(\tau_0)\int_{\mathbb{R}} d\eta_2 \frac { \Sigma(\eta_2,\tau)} {| \eta_2|} \int_{-\infty}^{0} ds |\xi_1 e^{-\left(\frac 1 a -1\right)s}-\eta_{2}|^{-a}\,.
\end{align}
Plugging the approximation \eqref{eq:Lambda4} in \eqref{eq:sigma2} we then get
\begin{align}\label{eq:sigma3}
\Sigma(\xi_1,\tau) &= \exp \left( - 8\pi  \, U_0 \lambda(\tau_0) \int_{\mathbb{R}} d\eta_2 \frac { \Sigma(\eta_2,\tau)} {| \eta_2|} \int_{-\infty}^{0} ds |\xi_1 e^{-\left(\frac 1 a -1\right)s}-\eta_{2}|^{-a} \right)\nonumber\\&
=\exp \left( - \frac{8 \pi\, a  \, U_0 \lambda(\tau_0) }{\left(1 -a \right)}\int_{\mathbb{R}} d\eta_2 \frac { \Sigma(\eta_2,\tau)} {| \eta_2|} 
\int_{\xi_1}^{\infty} \frac{dz}{z} |z -\eta_{2}|^{-a}\right)
\end{align}
where in the second identity we used the change of variables $z= \xi_1 e^{-\left(\frac 1 a -1\right)s}$. 
We now compute the derivative of $\Sigma(\xi_1,\tau)$ w.r.t. the $\eta_1$ variable, namely
\begin{equation}\label{eq:derivativesigma3}
\frac{\partial\Sigma(\xi_1,\tau) }{\partial \xi_1}=  \frac{ 8 \pi  \, a \, U_0 \lambda(\tau_0)}{\xi_1 (1-a)} 
\left(\int_{\mathbb{R}} d\eta_2 \frac { \Sigma(\eta_2,\tau)} {| \eta_2||\xi_1 -\eta_{2}|^{a} } \right)\Sigma(\xi_1,\tau) = \frac{ a \, U_0 \lambda(\tau_0)}{\xi_1 (1-a)} 
\Lambda(\xi_1,\tau) \Sigma(\xi_1,\tau)  
\end{equation}
and multiplying by $\left(\frac 1 a -1 \right)\xi_1>0$ both sides we obtain
\begin{equation}\label{eq:derivativesigma4}
\left(\frac 1 a -1 \right)\xi_1 \, \frac{\partial\Sigma(\xi_1,\tau) }{\partial \xi_1} = U_0 \lambda(\tau_0)
\Lambda(\xi_1,\tau) \Sigma(\xi_1,\tau)  
\end{equation}
Observe that, due to the definition of $\Sigma$ in terms of an exponential (cf. \eqref{eq:sigma2} and \eqref{eq:sigma3}) we have that $\Sigma(\xi_1,\tau) \to 1$ as $\xi_1 \to \infty$.  

We will also discuss in Section \ref{ssec:altersteadycutoff} below an alternative derivation for the cutoff problem for the kinetic equation.

We now look for functions $\Sigma(\xi_1,\tau)$ with the form 
\begin{equation}\label{eq:chvarSigma}
    \Sigma(\xi_1,\tau)=\sigma(y_1, \tau)\quad \text{where} \quad y_1=\frac{\xi_1}{\varepsilon(\tau)}
    \end{equation}
where we set the size of the collision region $\varepsilon(\tau)$ by means of 
\begin{equation}\label{def:esplamtau0}
    \varepsilon(\tau) =\left( U_0\lambda(\tau_0)\right)^{\frac{1}{a}}=\left( a\lambda(\tau_0)\right)^{\frac{1}{a}}\, , 
\end{equation}
where in the equation above we used \eqref{eq:valueUo}. 
We will further assume that $\sigma(y_1,\tau)$ is stationary. Therefore, using \eqref{eq:derivativesigma4} combined with \eqref{eq:chvarSigma} it will then follow that $\sigma=\sigma(y_1)$ satisfies the following steady problem with matching condition 
\begin{equation} \label{eq:pde_b_stat}
  \left(\frac{1}{a}-1\right) y_1  \frac{\partial {\sigma}}{\partial y_1}= \Lambda(y_1){\sigma}, \qquad {\sigma}(y_1)\to 1 \quad \text{as}\quad  y_1\to\infty
 \end{equation}
 with 
 \begin{equation}
 \label{eq:statLambda}   \Lambda(y_1)= 8\pi  \, \int_{\mathbb{R}}d\zeta_2 \, \frac {\sigma(\zeta_2)} {|y_1-\zeta_{2}|^{a}|\zeta_2|}\, .
\end{equation} 
Notice that, by assumption, $\sigma$ is an even function, i.e. $\sigma(y_1)=\sigma(-y_1)$.

The nonlinear problem \eqref{eq:pde_b_stat}, \eqref{eq:statLambda} gives a description of how the cutoff mechanism for the distribution function $\Sigma$ (cf.\eqref{def:B}) takes place due to the fact the collisions remove the particles in the region where $|\xi|\approx \varepsilon(\tau)$. 
We will study in detail the solutions to the problem \eqref{eq:pde_b_stat}, \eqref{eq:statLambda} in Section 
\ref{ss:cutoff}. We now briefly discuss how the cutoff mechanism takes place due to the fact that ${\sigma}(y_1)\to 0$ as $y_1\to 0$. A rigorous proof of this will be given in Section \ref{ss:cutoff}.

 We check the feasibility of a power law behaviour and, specifically, consider $\sigma(y_1)\sim \kappa\, y_1^{m}$ as $y_1\to 0$, for some $m>0$ and $\kappa>0$ a numerical constant. Then, using the equation \eqref{eq:pde_b_stat} satisfied by $\sigma$  and using  that 
 $$\Lambda(y_1)= 8\pi  \, \int_{\mathbb{R}}d\zeta_2 \, \frac 1 {|y_1-\zeta_{2}|^{a}|\zeta_2|}{\sigma}(\zeta_2)\sim 8\pi  \, \int_{\mathbb{R}}d\zeta_2 \, \frac \kappa {|y_1-\zeta_{2}|^{a}}\frac{\zeta_2^m}{|\zeta_2|}\sim \frac K{|y_1|^{a-m}}\qquad \text{as} \quad |y_1|\to 0 $$  with $a> m$  and $K>0$ a numerical constant. Notice that the condition $a> m$ does not bring to the right asymptotics, since the solution to   
 \begin{equation*} 
  \left(\frac{1}{a}-1\right) y_1  \frac{\partial {\sigma}}{\partial y_1}= \exp\left(-\frac{K}{|y_1|^{a-m}} \right) {\sigma}(y_1)
 \end{equation*}
does not converge to $0$ as $y_1\to 0$.

We further observe that the problem \eqref{eq:pde_b_stat} is invariant under rescalings. Thus, we can adjust the stationary solution ${\sigma}={\sigma}(y_1)$ in order to have
\begin{equation}\label{eq:exp-asymsigma}
  {\sigma}(y_1)\to 1 \ \ \text{as} \ \ y_1 \to \infty, \qquad  {\sigma}(y_1) \sim \kappa y_1^{m},\ \ \text{as} \ \ y_1 \to 0^+.
\end{equation}
On the contrary, this equation is compatible with $m>a$. To exclude the case $m=a$ is a more delicate problem that requires a more careful studies of the resulting logarithmic terms, that we will pursue in this paper.  In Section \ref{ss:cutoff} we will prove that \eqref{eq:exp-asymsigma} takes place with $m\geq a$. 
 \bigskip

  We now justify the approximation of $\tau_0$ given as in \eqref{eq:tau0coll} in the collision region. 
  Let us denote as 
  $$y:= e^{a\tau}(\varepsilon(\tau_0))^a \frac{(\xi_2-\xi_1)}{\xi_2^{1+ a} }.$$ From \eqref{eq:Gchar_b2} we notice that the relevant contribution of $U$ is due to the region where $y\approx 1$
that implies that 
\begin{equation}\label{eq:xi1xi2approx}
     (\xi_2-\xi_1)={\xi_2^{1+ a} } \frac{e^{-a\tau}}{(\varepsilon(\tau_0))^a}
     \end{equation}
     Now we plug in \eqref{eq:xi1xi2approx}  in
 \eqref{def:tau0in} and we obtain 
 $$\tau_0=\tau-\log \left( {\xi_2}\right)+\log\left({\xi_2^{1+ a} } \frac{e^{-a\tau}}{(\varepsilon(\tau_0))^a}\right)\sim \tau-a\tau +\log(\xi_2)$$
where we neglect the contribution of the terms $\log(\varepsilon(\tau))$ being much smaller than the others. Notice that this approximation holds true for all the values of $\xi$ within the region $\varepsilon(\tau)\leq |\xi|\leq \varepsilon(\tau)e^\tau$ thus proving \eqref{eq:tau0regions}. We further notice that this approximation also implies \eqref{eq:tau0coll} due to the fact that in the collision region $|\xi|\approx \varepsilon(\tau)$ the logarithmic term $\log(\xi_2)$ is there negligible.

\bigskip

The analysis performed below allows to obtain information on the shape of the region
yielding the collisions, and then the cutoff for
$Q(\cdot,\tau)$  as $|\xi_2|\approx \varepsilon(\tau)= (U_0\lambda(\tau_0))^\frac 1 a$. We remark that when $|\xi_2|$ becomes of order $\varepsilon(\tau)$ we have that $\tau_0\sim (1-a)\tau$ as $\tau\to \infty$, as given in \eqref{eq:tau0coll}.  
Our aim is now to close the delay equation and obtain an approximation for the auxiliary function $\overline{H}(z)$. 
In order to do this, we come back to the analysis of the boundary value \eqref{eq:Gbdval_a}. Combining  \eqref{eq:Gbdval_a} with \eqref{def:Gbd_selfsim} and the fact that $
H(\zeta, \tau) \sim \lambda(\tau)\overline{H}(\zeta)$, with $\zeta=\frac {\xi_2}{e^\tau \varepsilon(\tau)}$ (cf.~\eqref{eq:ansatzH}), we then obtain 
\begin{align}
  \lambda(\tau) \overline{H}(\zeta )
  &
  =\frac {8 \sqrt{2} \, \varepsilon(\tau)}{2^a \, \zeta} \int_{\varepsilon(\tau)|\zeta|}^{+\infty} dz \,\frac{1}{z^{a+1}}\Phi\left(\frac{ \varepsilon(\tau)|\zeta|}{z}\right) \int_{\mathbb{R}} d\eta_2 \,  {Q(\eta_2+z,\tau) \, Q(\eta_2-z,\tau) } \,.
  \label{eq:lamHbar}
\end{align} 
Using now the expression for $Q$ given in \eqref{def:B}, namely 
\[
Q(\eta_{2},\tau)\sim  \frac {\lambda(\tau_0)} {|\eta_2|}U_0\Sigma(\eta_2,\tau)= \frac {\lambda(\tau_0)} {|\eta_2|}U_0\sigma\left(\frac{\eta_2}{\varepsilon(\tau)},\tau\right),
\]
in the equation \eqref{eq:lamHbar} we arrive at
 \begin{align}
  \lambda(\tau) \overline{H}(\zeta )
  &
  \sim 
\frac {8 \sqrt{2}\,\varepsilon(\tau) }{2^a \, \zeta } \int_{\varepsilon(\tau) |\zeta|}^{+\infty} dz \,\frac{1}{z^{a+1}}\Phi\left(\frac{\varepsilon(\tau)|\zeta|}{z}\right) \int_{-\infty}^\infty  d\eta_2 \,  {Q(\eta_2+z,\tau) \, Q(\eta_2-z,\tau) }\nonumber \\&
= \frac {8  \sqrt{2}\,\varepsilon(\tau) }{2^a \, \zeta } \left( \lambda(\tau_0)\right)^2 U_0^2 \int_{ \varepsilon(\tau)|\zeta|}^{+\infty} dz \,\frac{1}{z^{a+1}}\Phi\left(\frac{\varepsilon(\tau)|\zeta|}{z}\right) \int_{-\infty}^\infty  d\eta_2 \, { \frac{\Sigma(\eta_2+z,\tau) }{|\eta_2+ z|}\, \frac{\Sigma(\eta_2-z,\tau) }{|\eta_2 - z|} }\nonumber \\&
\sim \frac {8 \sqrt{2}\,\varepsilon(\tau) }{2^a \, \zeta } \left( \lambda(\tau_0)\right)^2 U_0^2 \int_{\varepsilon(\tau)|\zeta|}^{+\infty} dz \,\frac{1}{z^{a+1}}\Phi\left(\frac{\varepsilon(\tau)|\zeta|}{z}\right) \int_{-\infty}^\infty  d\eta_2 \, { \frac{\sigma\left(\frac{\eta_2+z}{\varepsilon(\tau)}\right)}{|\eta_2 + z|}\,  \frac{\sigma\left(\frac{\eta_2-z}{\varepsilon(\tau)} \right)}{|\eta_2-z|} }
\label{eq:lamHbar2} 
\end{align}
where in the third approximate identity we use the stationarity  of the cutoff function $\sigma$.  
We further rescale $ \eta_2$ and $z$ as follows:
\[
  \eta_2 = {\widetilde{ \eta}}\,{\varepsilon(\tau)},
  \qquad
  z = \widetilde{z}\,\varepsilon( \tau).
\]
It follows that \eqref{eq:lamHbar2} becomes 
\begin{align}
  \lambda(\tau) \overline{H}(\zeta )
  &
  \sim 
\frac {8 \sqrt{2} }{2^a \, \zeta } \frac{\left( \lambda(\tau_0)\right)^2 U_0^2  }{\left(\varepsilon(\tau)\right)^{a}} \int_{|\zeta|}^{+\infty} d\widetilde{z} \,\frac{1}{\left(\widetilde{z}\right)^{a+1}} \,\Phi\left(\frac{|\zeta|}{\widetilde{z}}\right) \int_{-\infty}^\infty  d\widetilde{\eta} \,  { \frac{{\sigma}\left({ \widetilde{\eta} +\widetilde{z}} \right)}{|\widetilde{\eta} + \widetilde{z}|}\,  \frac{{\sigma}\left({\widetilde{\eta}-\widetilde{z}} \right)}{|\widetilde{\eta}-\widetilde{z}|} }
\label{eq:lamHbar3} 
\end{align}

We recall the normalization condition \eqref{eq:Hbar_norm}, namely $\int_{0}^{\infty} \zeta \, \overline{H}(\zeta) d\zeta=1$. To recover this normalization we multiply the formula \eqref{eq:lamHbar3} by $\zeta$ and integrate w.r.t. $\zeta$ between $0$ and $+\infty$. We then obtain
\begin{align}
  \lambda(\tau)\int_{0}^{\infty} \zeta \,  \overline{H}(\zeta ) d\zeta
  &
  \sim
\frac {8 \sqrt{2} }{2^a} \frac{\left( \lambda(\tau_0)\right)^2 U_0^2  }{\left(\varepsilon(\tau)\right)^{a}} \int_{0}^{\infty} d\zeta \int_{ \zeta}^{+\infty} d\widetilde{z} \,\frac{1}{\left(\widetilde{z}\right)^{a+1}} \,\Phi\left(\frac{\zeta}{\widetilde{z}}\right) \int_{-\infty}^\infty  d\widetilde{\eta} \,  { \frac{{\sigma}\left({ \widetilde{\eta} +\widetilde{z}} \right)}{|\widetilde{\eta} + \widetilde{z}|}\,  \frac{{\sigma}\left({\widetilde{\eta}-\widetilde{z}} \right)}{|\widetilde{\eta}-\widetilde{z}|} } 
\label{eq:lamHbar4} 
\end{align}
Using Fubini to exchange the order of integration in \eqref{eq:lamHbar4} we arrive at 
\begin{align}
  \lambda(\tau)\int_{0}^{\infty} \zeta \,  \overline{H}(\zeta ) d\zeta
  &
  \sim  
\frac {8 \sqrt{2} }{2^a}  \frac{\left( \lambda(\tau_0)\right)^2 U_0^2  }{\left(\varepsilon(\tau)\right)^{a}} \int_{0}^{\infty} d\widetilde{z}\,\frac{1}{\left(\widetilde{z}\right)^{a+1}}   \int_{0}^{\frac{\widetilde{z}}{4}} d\zeta \,\Phi\left(\frac{4\zeta}{\widetilde{z}}\right) \int_{-\infty}^\infty  d\widetilde{\eta} \,  { \frac{{\sigma}\left({ \widetilde{\eta} +\widetilde{z}} \right)}{|\widetilde{\eta} + \widetilde{z}|}\,  \frac{{\sigma}\left({\widetilde{\eta}-\widetilde{z}} \right)}{|\widetilde{\eta}-\widetilde{z}|} }\nonumber\\& =:
\frac {8 \sqrt{2} }{2^a}  \frac{\left( \lambda(\tau_0)\right)^2 U_0^2  }{\left(\varepsilon(\tau)\right)^{a}} \mathcal{I}
\label{eq:lamHbar5}
\end{align}
with $$\mathcal{I}:=\int_{0}^{\infty} d\widetilde{z}\,\frac{1}{\left(\widetilde{z}\right)^{a+1}}   \int_{0}^{\widetilde{z}} d\zeta \,\Phi\left(\frac{\zeta}{\widetilde{z}}\right) \int_{-\infty}^\infty  d\widetilde{\eta} \,  { \frac{{\sigma}\left({ \widetilde{\eta} +\widetilde{z}} \right)}{|\widetilde{\eta} + \widetilde{z}|}\,  \frac{{\sigma}\left({\widetilde{\eta}-\widetilde{z}} \right)}{|\widetilde{\eta}-\widetilde{z}|} }\, .$$
We notice that using the change of variables  $\zeta=\widetilde{z} s$ with $d\zeta= \widetilde{z}  ds$  we have $$\int_{0}^{\widetilde{z}} d\zeta \,\Phi\left(\frac{\zeta}{\widetilde{z}}\right) = \widetilde{z} \int_{0}^{1} ds \,\Phi\left(s\right) = \sqrt{2} \pi \widetilde{z} $$  where in the second identity we used \eqref{eq:intPhi} (see Appendix \ref{ssec:massG}). 
Plugging  in this identity into the integrals $\mathcal{I}$ in equation \eqref{eq:lamHbar5} we get
\begin{equation}
    \mathcal{I}= \sqrt{2}\pi  \int_{0}^{\infty} d\widetilde{z}\,\frac{1}{\widetilde{z}^{a}}    \int_{-\infty}^\infty  d\widetilde{\eta} \,  { \frac{{\sigma}\left({ \widetilde{\eta} +\widetilde{z}} \right)}{|\widetilde{\eta} + \widetilde{z}|}\,  \frac{{\sigma}\left({\widetilde{\eta}-\widetilde{z}} \right)}{|\widetilde{\eta}-\widetilde{z}|} } = \frac{\sqrt{2}\pi}{2} \int_{-\infty}^{\infty} d\widetilde{z}\,\frac{1}{|\widetilde{z}|^{a}}    \int_{-\infty}^\infty  d\widetilde{\eta} \,  { \frac{{\sigma}\left({ \widetilde{\eta} +\widetilde{z}} \right)}{|\widetilde{\eta} + \widetilde{z}|}\,  \frac{{\sigma}\left({\widetilde{\eta}-\widetilde{z}} \right)}{|\widetilde{\eta}-\widetilde{z}|} } 
\end{equation}
where in the second identity we use the symmetrization of the integral in the $\widetilde{z}$. Changing variables again and setting $\alpha=\widetilde{\eta} +\widetilde{z}$ and $\beta= \widetilde{\eta} -\widetilde{z}$ with Jacobian $d\alpha d\beta= 2 \, d\widetilde{\eta} d\widetilde{z}$ it follows that 
\begin{equation}
    \mathcal{I}= \frac{\sqrt{2}\pi}{4} \int_{-\infty}^{\infty} d\alpha \int_{-\infty}^{\infty} d\beta \, \frac{2^a}{|{\alpha-\beta}|^{a}}     \,  { \frac{{\sigma}\left(\alpha \right)}{|\alpha|}\,  \frac{{\sigma}\left( \beta \right)}{|\beta|} } 
\end{equation}
and hence, from \eqref{eq:lamHbar5}, we have 
\begin{align}
  \lambda(\tau)\int_{0}^{\infty} \zeta \,  \overline{H}(\zeta ) d\zeta
  &
  \sim 
\frac {8 \sqrt{2} }{2^a}  \frac{2^a \left( \lambda(\tau_0)\right)^2 U_0^2  }{\left(\varepsilon(\tau)\right)^{a}}   \frac{\sqrt{2}\pi}{4} \int_{-\infty}^{\infty} d\alpha \int_{-\infty}^{\infty} d\beta \, \frac{1}{|{\alpha-\beta}|^{a}}     \,  { \frac{{\sigma}\left(\alpha \right)}{|\alpha|}\,  \frac{{\sigma}\left( \beta \right)}{|\beta|} }  
\nonumber
\\& = 4 \pi \frac{\left( \lambda(\tau_0)\right)^2 U_0^2  }{\left(\varepsilon(\tau)\right)^{a}}  \int_{-\infty}^{\infty} d\alpha \int_{-\infty}^{\infty} d\beta \, \frac{1}{|{\alpha-\beta}|^{a}}     \,  { \frac{{\sigma}\left(\alpha \right)}{|\alpha|}\,  \frac{{\sigma}\left( \beta \right)}{|\beta|} } 
\label{eq:lamHbar6}
\end{align}
Using the normalization condition \eqref{eq:Hbar_norm} we then obtain
\begin{align}
  \lambda(\tau) \sim 
4 \pi \frac{\left( \lambda(\tau_0)\right)^2 U_0^2  }{\left(\varepsilon(\tau)\right)^{a}}  \int_{-\infty}^{\infty} d\alpha \int_{-\infty}^{\infty} d\beta \, \frac{1}{|{\alpha-\beta}|^{a}}     \,  { \frac{{\sigma}\left(\alpha \right)}{|\alpha|}\,  \frac{{\sigma}\left( \beta \right)}{|\beta|} } 
\label{eq:lamHbar7}
\end{align}
As it will be proved later in Section \ref{ss:cutoff3} (see Proposition \ref{prop:intLam}), it is possible to compute explicitly the integral on the r.h.s. of \eqref{eq:lamHbar7} and it turns out that 
\begin{equation}
    4\pi \int_{-\infty}^{\infty} d\alpha \int_{-\infty}^{\infty} d\beta \, \frac{1}{|{\alpha-\beta}|^{a}}     \,  { \frac{{\sigma}\left(\alpha \right)}{|\alpha|}\,  \frac{{\sigma}\left( \beta \right)}{|\beta|} } = \frac{(1-a)}{a}
\label{eq:lamHbar8}
\end{equation}
Therefore, \eqref{eq:lamHbar7} implies
\begin{equation}\label{eq:lamHbar9}
   \lambda(\tau)   \sim  \frac{\left( \lambda(\tau_0)\right)^2 U_0^2  }{\left(\varepsilon(\tau)\right)^{a}}  \frac{(1-a)}{a}
\end{equation}
Recalling now the formula for characteristic size $\varepsilon(\tau)$ of the region where the collisions take place (cf. \eqref{def:esplamtau0}) from  \eqref{eq:lamHbar8} we finally obtain 
\begin{equation}\label{eq:lamHbar10}
   \lambda(\tau)   \sim \frac{\left( \lambda(\tau_0)\right)^2 U_0^2  }{\left(U_0 \lambda(\tau_0)\right)}  \frac{(1-a)}{a}
   =  (1-a)  \lambda(\tau_0) \, \frac{U_0}{a} 
\end{equation}
Using \eqref{def:intU0} it follows that 
\begin{equation}\label{eq:lamHbar11}
   \lambda(\tau)   \sim  
   (1-a)\, \lambda(\tau_0),  
\end{equation}
for $|\xi| \lesssim \varepsilon(\tau)$, i.e. in the collision region for which $\tau_0 \sim (1-a)\tau$. 

\smallskip

We now compute the asymptotic behaviour of $\lambda(\tau)$ as $\tau \to \infty$ by using the mass conservation property for $G(\xi_1, \xi_2, \tau)$.  We recall the approximation for 
$G(\xi_1, \xi_2, \tau)$ given as in \eqref{eq:Gchar_b2} as well the definition of the cutoff function $\Sigma$ given as in \eqref{eq:sigma2}. We have 
\begin{equation}\label{eq:GapprU}
   G(\xi, \tau) = \lambda(\tau_0(\tau, z)) (\varepsilon(\tau_0))^a \frac{e^{a\tau} }{\xi_2^{2+a}} U \left( \frac{(\varepsilon(\tau_0))^a\, e^{a\tau}\, (\xi_2 - \xi_1)}{ \xi_2^{1+a}} \right) \Sigma (\xi_2,\tau)\,.
   \end{equation}
We  integrate $G$ with respect to $\xi_1, \xi_2$ and using the symmetry under the reflection $\xi \leftrightarrow -\xi$ we have that $$\int_{\mathbb{R}^2} d\xi  \, G(\xi, \tau) = 2 \int_{0}^\infty d\xi_2 \int_{0}^{\infty} d\xi_1 \, G(\xi_1, \xi_2, \tau)\approx 2 \int_{0}^\infty d\xi_2 \int_{0}^{\xi_2} d\xi_1 G(\xi_1, \xi_2, \tau)$$ and hence 
\begin{align}\label{eq:apprGdelta1}
 2 \int_{0}^\infty d\xi_2 \int_{0}^{\xi_2} d\xi_1  \lambda(\tau - \log(\xi_2) + \log(\xi_2 - \xi_1)) \frac{e^{a\tau}}{\xi_2^{2+a}} U \left( \frac{(\varepsilon(\tau_0))^a(\xi_2 - \xi_1)}{e^{-a\tau} \xi_2^{1+a}} \right) \Sigma (\xi_2, \tau) d\xi_1\,d\xi_2 \end{align}
Let $y:=\frac{(\varepsilon(\tau_0))^a(\xi_2 - \xi_1)}{e^{-a\tau} \xi_2^{1+a}}$ with $dy= - \frac{(\varepsilon(\tau_0))^a }{e^{-a\tau} \xi_2^{1+a}} d\xi_1$. 
Notice that in \eqref{eq:GapprU} $U$ gives a relevant contribution for $|y| =O(1)$.  
Using
\begin{align}\label{eq:apprGdelta2}
 & 2 \int_{0}^\infty  \int_{0}^{\infty} d\xi_2 \frac{dy}{\xi_2} \lambda(\tau - \log(\xi_2) +\log( \varepsilon(\tau_0) ^{-a}\, e^{-a\tau} \xi_2^{1+a} y) )\, U(y) \Sigma(\xi_2,\tau) 
 \nonumber \\&
 \sim 2 \int_{0}^\infty  \int_{0}^{\infty} d\xi_2 \frac{dy}{\xi_2}   
\lambda(\tau - \log(\xi_2) - a\tau + (1+a)\log(\xi_2) + \log(y))\, U(y) \Sigma(\xi_2,\tau)  \nonumber 
\\& \sim 2 \int_{0}^\infty  \int_{0}^{\infty} d\xi_2  \frac{dy}{\xi_2}   
\lambda((1-a) \tau + a\log(\xi_2))\, U(y) \Sigma(\xi_2,\tau) \quad \text{as} \quad d\tau \to \infty\,
\end{align} 
where in the first approximate identity we used that $\xi_2 + e^{-a\tau} \xi_2^{1+a} y\sim  \xi_2$ and in the second one that $\log(y)$ is negligible for $|y|\lesssim 1$ and also neglected the $\log(\varepsilon(\tau_0))$ since we expect $\varepsilon(\tau_0)$ to behave as a power law of $\tau_0$.  We then have that the r.h.s. of \eqref{eq:apprGdelta2} gives
\begin{align}
& 2 \int_{0}^{+\infty} d\xi_2 \frac {\lambda((1-a) \tau + a\log(\xi_2)) }{\xi_2} \,\Sigma(\xi_2,\tau)\left(\int_{-\infty}^{+\infty} {dy}   
 U(y) \right) \nonumber
 \\& 
 = 2\, U_0 \int_{0}^{+\infty} d\xi_2 \, \frac {\lambda((1-a) \tau + a\log(\xi_2)) }{\xi_2} \Sigma(\xi_2,\tau) 
 = 2 a \int_{0}^{+\infty} d\xi_2 \, \frac {\lambda((1-a) \tau + a\log(\xi_2)) }{\xi_2} \Sigma(\xi_2,\tau) 
\end{align}
where we used that 
\begin{equation} \label{def:valueU0}
  U_0:= \int_{-\infty}^{+\infty} {dy}  
 U(y)=a.
\end{equation}
 The mass conservation $\int_{\mathbb{R}^2} d\xi G(\xi,\tau)=1$ then implies  
 \begin{equation}\label{eq:asymGmass5}
     2 a \int_{0}^{+\infty} d\xi_2 \, \frac {\lambda((1-a) \tau + a\log(\xi_2)) }{\xi_2} \Sigma(\xi_2,\tau) \sim 1\,  \quad \text{as}\quad \tau \to \infty.
 \end{equation}
 We now use the fact that the function $\Sigma$ can be thought as a cutoff function that does not vanish for values $\varepsilon(\tau) \leq \xi_2 \leq e^\tau \varepsilon(\tau)$ and the approximation $\Sigma\sim 1$ holds true in this interval. Therefore, using the ansatz \eqref{eq:ansatzlambda}, namely  that $
\lambda(\tau)\sim \frac{C_0}{\tau}\ \text{as} \ \tau\to \infty $
where $C_0>0$ is a numerical constant, one can reduce \eqref{eq:asymGmass5} to \eqref{eq:asymGmass3} which then allows to obtain \eqref{def:Co}, namely the value of the numerical constant $
 C_0 =\frac 1 {2a\,\log \left(\frac{1}{1-a}\right)}$, 
as explained in Section \ref{sec:heuristic}. 

 \bigskip

 Hence, we obtain the following approximation:
\begin{equation}\label{eq:finalGappr_sec5}
G(\xi, \tau) = 
\frac 1 { 2\log \left(\frac{1}{1-a}\right)}\, \frac{1} 
{\, ((1-a)\tau + a \log(\vert\xi_1\vert )) \, \vert\xi_1\vert } \chi_{\{\vert\xi_1\vert  \le e^\tau \varepsilon(\tau) \}} {\sigma} \left( \frac{\xi_1}{\varepsilon(\tau)} \right) \delta(\xi_2 - \xi_1) 
 \quad \text{as} \ \ \tau \to \infty \end{equation}
which concludes the derivation of the formula providing the asymptotics of the reduced distribution function $G(\xi,\tau)$.

It is interesting to notice that we defined the size of the collision region $\varepsilon(\tau)$ by means of the loss term, specifically using the scale in which the loss term becomes relevant, as can be checked from \eqref{eq:derivativesigma4}-\eqref{def:esplamtau0}.   On the other hand, when we use this value of $\varepsilon(\tau)$  in the gain term (cf. \eqref{eq:lamHbar3}) we obtain the approximate identity \eqref{eq:lamHbar11} with $ \tau_0=(1-a) \tau$ for $|\xi|\approx \varepsilon(\tau)$, i.e.  
\begin{equation}\label{eq:approxLamdbatau_sec5}
 \lambda(\tau)  \sim   
   (1-a)\, \lambda((1-a)\tau)
\end{equation}
   that holds true since 
$\lambda(\tau)\sim \frac{C_0}{\tau}$, as $\tau\to \infty$ (cf.~\eqref{eq:ansatzlambda}) that was derived from the mass conservation. The fact that \eqref{eq:lamHbar11} has been obtained by using the value $\varepsilon(\tau)$ given in \eqref{def:esplamtau0} provides a strong argument in favour of the consistency of the asymptotics obtained in this paper. 

\smallskip

After deriving all these asymptotics for the functions that allow to characterize the typical scales of the system, i.e. $\lambda(\tau)$ and $\varepsilon(\tau)$, we can finally obtain a closed form for the auxiliary function $\overline{H}(\zeta)$ whose existence was assumed in \eqref{eq:ansatzH}. Using \eqref{eq:lamHbar3}, namely 
$$
  \lambda(\tau) \overline{H}(\zeta )
  \sim  
\frac {8 \sqrt{2} }{2^a \, \zeta } \frac{\left( \lambda(\tau_0)\right)^2 U_0^2  }{\left(\varepsilon(\tau)\right)^{a}} \int_{  |\zeta|}^{+\infty} d\widetilde{z} \,\frac{1}{\left(\widetilde{z}\right)^{a+1}} \,\Phi\left(\frac{ |\zeta|}{\widetilde{z}}\right) \int_{-\infty}^\infty  d\widetilde{\eta} \,  { \frac{{\sigma}\left({ \widetilde{\eta} +\widetilde{z}} \right)}{|\widetilde{\eta} + \widetilde{z}|}\,  \frac{{\sigma}\left({\widetilde{\eta}-\widetilde{z}} \right)}{|\widetilde{\eta}-\widetilde{z}|} }
$$ 
and combining it with \eqref{eq:lamHbar9}, namely $
\lambda(\tau)   \sim \frac{\left( \lambda(\tau_0)\right)^2 U_0^2  }{\left(\varepsilon(\tau)\right)^{a}}  \frac{(1-a)}{a}$ 
   we then obtain  \begin{equation}
      \label{def:barH_sec5}
  \overline{H}(\zeta )
=
\frac {8 \sqrt{2} }{2^a \, \zeta }  \frac{a}{(1-a)} \int_{ |\zeta|}^{+\infty} d\widetilde{z} \,\frac{1}{\left(\widetilde{z}\right)^{a+1}} \,\Phi\left(\frac{|\zeta|}{\widetilde{z}}\right) \int_{-\infty}^\infty  d\widetilde{\eta} \,  { \frac{{\sigma}\left({ \widetilde{\eta} +\widetilde{z}} \right)}{|\widetilde{\eta} + \widetilde{z}|}\,  \frac{{\sigma}\left({\widetilde{\eta}-\widetilde{z}} \right)}{|\widetilde{\eta}-\widetilde{z}|} }\,. 
\end{equation} 
We observe that the equation above provides a proper definition of the function $\overline{H}(\zeta )$. Moreovoer, we 
remark that, by construction, we obtain that $\int_0^\infty \, \zeta \, \overline{H}(\zeta)  \, du =1$ as expected (cf. \eqref{eq:Hbar_norm}). 

\bigskip

\subsection{Alternative derivation of the cutoff problem for the kinetic equation}\label{ssec:altersteadycutoff}

It is worth mentioning an alternative derivation of this cutoff problem \eqref{eq:pde_b_stat}, \eqref{eq:statLambda}, using as starting point the kinetic equation. We examine how the term $\Sigma$, given as in \eqref{def:B}, can be deduced. Looking at the system of characteristics
\eqref{eq:systemchar_1}-
\eqref{eq:systemchar_3} it appears that the only term that reduces $\Sigma$ is the term $-\Lambda G$ in \eqref{eq:systemchar_3}. From  \eqref{eq:systemchar_1}- \eqref{eq:systemchar_3}, using \eqref{eq:ansatzG} and \eqref{def:B} we then formally obtain 
\begin{align}\label{eq:systemcharB}
   & \frac{d \xi_2}{d\tau}  = -\left(\frac{1}{a}-1\right) \xi_2;  \\&
    \frac{d \Sigma}{d\tau} (\xi_2,\tau) = -\Lambda(\xi_2,\tau) \Sigma(\xi_2,\tau)
\end{align}
where we used that, thanks to the Dirac, $\xi_1=\xi_2$. 

We can change variables in order to describe the ``damping" of $\Sigma$ in the region where $\xi_2\approx \varepsilon(\tau)$, $\varepsilon(\tau)$ changing very slowly, and we set 
$\xi_2 = \varepsilon(\tau) y_2$ with $\varepsilon(\tau)=\left( a\lambda(\tau) \right)^ \frac 1 a$, $\lambda$ slowly varying (cf. \eqref{def:esplamtau0}). 
It then follows that we can rewrite $\Lambda(\xi_1,\tau)$ given as in \eqref{eq:Lambda2} as 
\begin{equation}
\label{eq:Lambda4} 
\Lambda(\xi_1,\tau)\sim 
 8\pi a \, \lambda(\tau) \, \int_{\mathbb{R}}d\eta_2 \, \frac 1 {|\xi_2-\eta_{2}|^{a}|\eta_2|}\Sigma(\eta_2,\tau) 
 = 8\pi  \, \int_{\mathbb{R}}d\zeta_2 \, \frac 1 {|y_2-\zeta_{2}|^{a}|\zeta_2|}\sigma(\zeta_2,\tau) \,. 
\end{equation}
where in the first step we used $\xi_1=\xi_2$ and in the second one the change of variables $\eta_2=\varepsilon(\tau) \zeta_2$ as well as the fact that 
$\Sigma({\eta_2},\tau)=\sigma(\zeta_2, \tau)$ where $\zeta_2=\frac{\eta_2}{\varepsilon(\tau)}$.  

Looking at the system of characteristics \eqref{eq:systemcharB} and using the fact that $\Sigma\left(\xi_2,\tau\right)=\sigma\left(\frac{\xi_2}{\varepsilon(\tau)}, \tau\right)$ 
$ \xi_2= \varepsilon(\tau)y_2$, we obtain that 
$$\frac{d \Sigma}{d\tau} (\xi_2,\tau) = \frac{d }{d\tau} b\left(\frac{y_2}{\varepsilon(\tau)} , \tau\right)=\frac{d b}{d\tau}+ \underbrace{\frac{\dot{\varepsilon}(\tau)}{\varepsilon(\tau)} y_2\, \frac{\partial \sigma}{\partial y_2}}_{=:I_1} , \qquad 
\frac{d \xi_2}{d\tau}  = \dot{\varepsilon}(\tau) y_2
+ \varepsilon(\tau) \frac{dy_2}{d\tau}= \varepsilon(\tau) \left(\underbrace{\frac{ \dot{\varepsilon}(\tau)}{\varepsilon(\tau)}y_2}_{=:I_2}  + \frac{dy_2}{d\tau}\right)  $$
where we used the notation $\dot{\varepsilon}(\tau)$ to denote the time derivative. We now observe that the terms  $I_1, \, I_2$ are negligible. More precisely, $I_1, \, I_2 \to 0$ due to the fact that $\frac{\dot{\varepsilon}(\tau)}{\varepsilon(\tau)}\to 0$ as $\tau\to \infty$. 
Therefore, we can rewrite the system of characteristics \eqref{eq:systemcharB}, which expresses the damping of $\Sigma$ due to collisions, in terms of $\sigma\left({y_2}, \tau\right)$. It reads 
\begin{align}\label{eq:systemcharb}
   & \frac{d y_2}{d\tau}  = -\left(\frac{1}{a}-1\right) y_2;  \\&
    \frac{d \sigma}{d\tau} (y_2,\tau) = -\Lambda(y_2,\tau) \sigma(y_2,\tau)
\end{align}
where $\Lambda=\Lambda(y_2,\tau)$ is given as in \eqref{eq:Lambda2}, i.e., $
\Lambda(y_2,\tau)
 = 8\pi  \, \int_{\mathbb{R}}d\zeta_2 \, \frac 1 {|y_2-\zeta_{2}|^{a}|\zeta_2|}\sigma(\zeta_2,\tau).$

 We can then look at the PDE corresponding to \eqref{eq:systemcharb}, namely
 \begin{equation} \label{eq:pde_b}
  \frac{\partial \sigma}{\partial \tau}-\left(\frac{1}{a}-1\right) y_2  \frac{\partial \sigma}{\partial y_2}= -\Lambda(y_2,\tau) \sigma,  \quad \sigma=\sigma(y_2,\tau)
 \end{equation}
where $\Lambda$ is given as in \eqref{eq:Lambda2}.  The steady states of the equation \eqref{eq:pde_b} satisfy the cutoff problem \eqref{eq:pde_b_stat}, \eqref{eq:statLambda}. 
\bigskip



\section{Asymptotics for the full distribution $F$} \label{sec:asymF}
We now recall that the full distribution $F=F(\xi,\tau)$, $\xi\in\mathbb{R}^3 $
satisfies \eqref{eq:ss_strong2}, namely 
\begin{equation*}    \partial_{\tau}F-\frac{1}{a}\partial_{\xi_1}(\xi_1F)-\left(\frac{1}{a}-1\right)\partial_{\xi_2}(\xi_2F)-\left(\frac{1}{a}-1\right)\partial_{\xi_3}(\xi_3F)
   +  \partial_{\xi_1}(\xi_2 F )
   =
     {Q}^+[F,F]({\xi})-{Q}^-[F,F](\xi)
\end{equation*}
where ${Q}^+[F,F]$ and ${Q}^-[F,F]$ given as in \eqref{eq:ss_strong2_gain} and \eqref{eq:ss_strong2_loss} respectively and with $\xi=(\xi_1,\tilde \xi)\in \mathbb{R}^3$, $\tilde \xi=(\xi_2,\xi_3)\in \mathbb{R}^2$. 
In analogy with the boundary problem \eqref{eq:evG}, \eqref{eq:bdVG}, it is  possible to write a  boundary value problem also for the original solution $F$. More precisely, we consider the following
\begin{align}\label{eq:BVPF}
     \partial_{\tau}F  -\frac{1}{a}\partial_{\xi_1}(\xi_1F) & -\left(\frac{1}{a}-1\right)\partial_{\xi_2}(\xi_2F)-\left(\frac{1}{a}-1\right)\partial_{\xi_3}(\xi_3F)
   + \xi_2 \partial_{\xi_1} F = -{Q}^-[F,F] 
    \\
   F(0^{+},\xi_2,\xi_3,\tau)&
 = \frac {32}{2^a\, \xi_2} t^{a-1}\int_{\R} dx \int_{\mathbb{R}}dy_2\int_{\mathbb{R}}dy_3\, \int_0^{\pi} d\theta \,F\left( x_1+   |\tilde \xi|\left(t \sin(2\theta)\right)^{-1}, x_2+y_2,x_3+y_3, \tau\right) \times \notag \\&  \qquad \qquad \quad \times F\left(x_1-|\tilde \xi|\left(t \sin(2\theta)\right)^{-1},x_2-y_2,x_3-y_3,\tau \right) \frac{\sin\theta \, \vert \sin(2\theta)\vert^{a-1} }{  |\tilde \xi| ^{a+1}}   \,.   \label{eq:BVPF0}
\end{align} 
We consider the system of characteristics associated to 
\eqref{eq:BVPF}, \eqref{eq:BVPF0}:
\begin{align}\label{eq:characteristicsF}
    \frac{d \xi_1}{ds} &=-\frac{1}{a}\xi_1+\xi_2; \notag \\
    \frac{d\xi_2}{ds} & = -\left(\frac{1}{a}-1\right)\xi_2; \notag \\
    \frac{d\xi_3}{ds} & = -\left(\frac{1}{a}-1\right)\xi_3; \notag \\
    \frac{d F}{ds} &=
    \left(\frac{3}{a}-2\right)F -   U_0 \frac {\lambda(\tau_0)}{\varepsilon(\tau)^a} 
   \Lambda \left( \frac {\xi_1}{\varepsilon(\tau)} \right)\,F     
\end{align}
where $\Lambda$ is given as in \eqref{eq:statLambda} in Section \ref{sec:asymG}, namely
\begin{equation}\label{def:Mz_sec7}
\Lambda ( z)  =8 \pi  \int_{\mathbb{R}} d\eta  \, \left| z -\eta\right|^{-a}\frac{\sigma(\eta)}{|\eta|}\,.
\end{equation}
We remark that the equation for $F$ in the system of characteristics \eqref{eq:characteristicsF} has been obtained combining \eqref{eq:ss_strong2_loss}, the definition of the reduced two-dimensional distribution $G(\xi_1,\xi_2, \tau) =\int_{\Rl} F(\xi_1,\xi_2,\xi_3,\tau)\, d \xi_3$ (cf. \eqref{def:G}) and \eqref{eq:Gchar_c} or, equivalently, \eqref{eq:finalGappr_sec5}.

Moreover, we notice that, using \eqref{def:esplamtau0}, i.e. $\frac{\lambda(\tau_0)}{\varepsilon(\tau)^a}=\frac 1 {U_0}$ with $U_0=a$ (cf.\eqref{def:valueU0}), we have that $ U_0 \frac {\lambda(\tau_0)}{\varepsilon(\tau)^a} =1$ and thus we can rewrite the equation for $F$ as 
$$  \frac{d F}{ds}=\left(\frac{3}{a}-2\right)F - 
   \Lambda  \left( \frac {\xi_1}{\varepsilon(\tau)} \right)F\ . $$    
Taking as initial conditions $\xi_{1,0}=0,\xi_2(0)=\xi_{2,0},\xi_3(0)=\xi_{3,0}$, the solution to the system \eqref{eq:characteristicsF} is given by
\begin{align}
    \xi_1(\tau) &=\xi_{2,0}e^{-\frac{1}{a}(\tau-\tau_0)}(e^{\tau-\tau_0}-1); \label{eq:xi1F} \\
    \xi_2(\tau) & =\xi_{2,0}e^{-\left(\frac{1}{a}-1\right)(\tau-\tau_0)}; \label{eq:xi2F} \\
    \xi_3(\tau) & =\xi_{3,0}e^{-\left(\frac{1}{a}-1\right)(\tau-\tau_0)}; \label{eq:xi3F} \\
    F(\xi_1,\xi_2,\xi_3,\tau) & = F(0^+,\xi_{2,0},\xi_{3,0},\tau_0)\exp\left(\left(\frac{3}{a}-2\right)(\tau-\tau_0)\right)
    \exp \left(- \int_{\tau_0}^{\tau}  ds\,\Lambda \left( \frac {\xi_1(s)}{\varepsilon(\tau)} \right)  \right) \label{eq:charF}.
\end{align}
We also notice that, from \eqref{eq:xi1F}, \eqref{eq:xi2F} we have
\begin{equation} \label{eq:xi1xi2_F}
 \frac{\xi_1}{\xi_2}=1- e^{- (\tau-\tau_0)}  \text{ or, equivalently, } e^{(\tau-\tau_0)}= \frac{\xi_2}{\xi_2-\xi_1}\,. 
\end{equation} 
Moreover from \eqref{eq:charF} we obtain 
\begin{equation}\label{eq:solFchar_s7_1}
F(\xi_1,\xi_2,\xi_3,\tau)  =F\left(0^+,\xi_{2,0},\xi_{3,0},\tau_0\right)  \exp \left(- \int_{\tau_0}^{\tau}  ds\,
\Lambda  \left( \frac {\xi_1(s)}{\varepsilon(\tau)} \right)  \right)  \left(1-\frac{\xi_1}{\xi_2}\right)^{-\left(\frac{3}{a}-2\right)}, 
\end{equation}

We now consider the boundary value condition \eqref{eq:BVPF0} satisfied by the rescaled function $F$ and, setting $\eta_3=x_3+y_3$ and $u_3=x_2-y_2$, that implies $x_3=\frac 1 2 (\eta_3+u_3)$ $y_3=\frac 1 2 (\eta_3-u_3)$ and with Jacobian $dx_3\,dy_3=\frac 1 2 d\eta_3\,du_3$, we can rewrite \eqref{eq:BVPF0} as 
\begin{align*}
 & F(0^{+},\xi_2,\xi_3,\tau)
 \nonumber \\&
 = \frac {32}{2^a \, \xi_2} t^{a-1} \int_{\mathbb{R}}dx_1\int_{\mathbb{R}}dx_2 \int_{\mathbb{R}}dx_3 \int_{\mathbb{R}}dy_2 \int_{\mathbb{R}}dy_3 \, \int_0^{\pi} d\theta \,F\left( x_1+  |\tilde \xi|\left(t \sin(2\theta)\right)^{-1}, x_2+y_2,x_3+y_3, \tau\right) \times \notag \\& \qquad \qquad \qquad \qquad \qquad \times F\left(x_1-|\tilde \xi|\left(t \sin(2\theta)\right)^{-1},x_2-y_2,x_3-y_3,\tau \right) \frac{\sin\theta \, \vert \sin(2\theta)\vert^{a-1} }{  |\tilde \xi| ^{a+1}}     \\& 
 = \frac {16}{2^{a}\, \xi_2} t^{a-1}\int_{\mathbb{R}}dx_1\int_{\mathbb{R}}dx_2 \int_{\mathbb{R}}d\eta_3 \int_{\mathbb{R}}dy_2 \int_{\mathbb{R}}du_3 \, \int_0^{\pi} d\theta \, F\left( x_1+  |\tilde \xi|\left(t \sin(2\theta)\right)^{-1}, x_2+y_2,\eta_3, \tau\right) \times \notag \\& \qquad \qquad \qquad \qquad \qquad \times F\left(x_1-|\tilde \xi|\left(t \sin(2\theta)\right)^{-1},x_2-y_2,u_3,\tau \right) \frac{\sin\theta \, \vert \sin(2\theta)\vert^{a-1} }{  |\tilde \xi| ^{a+1}}    \nonumber\\&
 =\frac {16}{2^{a}\, \xi_2} t^{a-1} \int_{\mathbb{R}}dx_1\int_{\mathbb{R}}dx_2 \int_{\mathbb{R}}dy_2  \, \int_0^{\pi} d\theta \, G\left( x_1+  |\tilde \xi|\left(t \sin(2\theta)\right)^{-1}, x_2+y_2, \tau\right) \times \notag \\& \qquad \qquad \qquad \qquad \qquad \times G\left(x_1-|\tilde \xi|\left(t \sin(2\theta)\right)^{-1},x_2-y_2,\tau \right) \frac{\sin\theta \, \vert \sin(2\theta)\vert^{a-1} }{  |\tilde \xi| ^{a+1}}   
 \end{align*} 
 where we used that $G\left(z_1,z_2,\tau \right)=\int_{\mathbb{R}}dz_3 F\left(z_1 ,z_2,z_3, \tau \right)$. 
 Following the same strategy used in Appendix \ref{ss:justificationGeq}
 to deduce \eqref{eq:G_gain_4} from \eqref{eq:G_gain}, even if in those computations there was an additional integral in the $\xi_3$ variable that we are not performing here, we arrive at 
 \begin{align}\label{eq:Fboundary2}
  F(0^{+},\xi_2,\xi_3,\tau)&= 
 \frac{ 16}{2^a \, \xi_2} t^{a}  \int_{\mathbb{R}^2} dx_1dx_2 \int_{\mathbb{R}}d{y_2}  \int_{\frac{|\tilde{\xi}|}{t}}^{+\infty } dz \, G\left( x_1+ z, x_2+y_2, \tau\right) G\left(x_1- z,x_2-y_2,\tau \right) \notag \\& \ \ 
  \qquad \times 
  \frac{ \Big(\sin(\frac \psi 2)+\cos(\frac \psi 2)\Big)}{ |\tilde\xi|^{a+2} } \frac{|\sin(\psi)|^{a+1}}{\sqrt{1-\sin^2(\psi)}}
  \end{align} 
  where we recall that $\psi$ is given by means of \eqref{eq:ChangeVar_zpsi}. 
Notice that the integral concentrates in the region where collisions take place, namely where $|\xi| \approx \delta$. 
\smallskip 

We now recall the transformation  
$
z = \frac{|\tilde{\xi}|}{t \sin(\psi)}$ with Jacobian $dz = -\frac{|\tilde{\xi}|}{t \sin^{2}(\psi)} \cos(\psi) d\psi,$ and with $\psi \in \left[0, \frac{\pi}{2}\right)$, $t = e^{\tau}$ and $\tilde{\xi} = (\xi_2, \xi_3)$. We consider again the gain term $K^+[G,G](\xi)$, which is given by
\begin{multline}\label{eq:K+opA}
K^{+}[G,G](\xi) = \frac{32}{2^{a}} t^{a} \delta(\xi_{1}) \int_{0}^{+\infty} d\xi_{3} \int_{\mathbb{R}^{2}} dx_{1} dx_{2} \int_{\mathbb{R}} dy_{2} \left[ \int_{\frac{|\tilde{\xi}|}{t}}^{+\infty} dz \, G(x_{1}+z, x_{2}+y_{2}, \tau) \right. \\
\left. \times G(x_{1}-z, x_{2}-y_{2}, \tau) \frac{(\sin(\frac{\psi}{2}) + \cos(\frac{\psi}{2}))}{|\tilde{\xi}|^{a+2}} \frac{|\sin(\psi)|^{a+1}}{\cos(\psi)} \right]\, . 
\end{multline}
In particular, we look at the integrand on the r.h.s. of \eqref{eq:K+opA} and introduce the auxiliary function kernel 
\begin{equation}
\mathcal{A} \equiv \frac{(\sin(\frac{\psi}{2}) + \cos(\frac{\psi}{2}))}{|\tilde{\xi}|^{a+2}} \frac{|\sin(\psi)|^{a+1}}{\sqrt{1-\sin^{2}(\psi)}}\label{def:mathcalA}
\end{equation}
which allows to rewrite
\begin{multline}\label{eq:K+opA2}
K^{+}[G,G](\xi) = \frac{32}{2^{a}} t^{a} \delta(\xi_{1}) \int_{0}^{+\infty} d\xi_{3} \int_{\mathbb{R}^{2}} dx_{1} dx_{2} \int_{\mathbb{R}} dy_{2} \left[ \int_{\frac{ |\tilde{\xi}|}{t}}^{+\infty} dz \, G(x_{1}+z, x_{2}+y_{2}, \tau) \right. \\
\left. \times G(x_{1}-z, x_{2}-y_{2}, \tau) \mathcal{A} \right]\, . \notag
\end{multline}
Using the change of variables above we write $\sin(\psi) = \frac{|\tilde{\xi}|}{t z}$ and, thanks to the trigonometric half-angle identities, it follows   
\begin{align*}
\cos\left(\frac{\psi}{2}\right) &= \frac{1}{\sqrt{2}} \sqrt{1 + \sqrt{1 - \sin^{2}(\psi)}} = \frac{1}{\sqrt{2}} \sqrt{1 + \sqrt{1 - \left(\frac{|\tilde{\xi}|}{t z}\right)^{2}}} \\
\sin\left(\frac{\psi}{2}\right) &= \frac{1}{\sqrt{2}} \sqrt{1 - \sqrt{1 - \sin^{2}(\psi)}} = \frac{1}{\sqrt{2}} \sqrt{1 - \sqrt{1 - \left(\frac{|\tilde{\xi}|}{t z}\right)^{2}}}\ .
\end{align*}
Using the two identities above in $\mathcal{A}$ we arrive at 
\begin{equation}
\mathcal{A} = \frac{\sqrt{2}}{2} \frac{1}{|\tilde{\xi}|(t z)^{a+1}} \frac{\sqrt{1 + \sqrt{1 - (\frac{|\tilde{\xi}|}{t z})^{2}}} + \sqrt{1 - \sqrt{1 - (\frac{|\tilde{\xi}|}{t z})^{2}}}}{\sqrt{1 - (\frac{|\tilde{\xi}|}{t z})^{2}}} \ .\label{def:mathcalA2}
\end{equation}
We analyze the boundary value 
$
F(0^{+}, \xi_{2}, \xi_{3}, \tau)$ that now reads
\begin{multline}
F(0^{+}, \xi_{2}, \xi_{3}, \tau) = \frac{ 16}{2^{a}}\frac{t^a}{\xi_2} \int_{\mathbb{R}^{2}} dx_{1} dx_{2} \int_{\mathbb{R}} dy_{2} \int_{\frac{|\tilde{\xi}|}{t}}^{+\infty} dz \, G(x_{1}+z, x_{2}+y_{2}, \tau) G(x_{1}-z, x_{2}-y_{2}, \tau)\mathcal{A}\,. 
\end{multline}
Replacing $\mathcal{A}$ by the right hand side of \eqref{def:mathcalA2} into the integral, we have:
\begin{align}
\label{eq:Fbdexp}
F(0^{+}, \xi_{2}, \xi_{3}, \tau) = & \frac{8\sqrt{2}}{2^{a}} \frac{1}{ \xi_{2} |\tilde{\xi}|\, t} \int_{\mathbb{R}^{2}} dx_{1} dx_{2} \int_{\mathbb{R}} dy_{2} \int_{\frac{|\tilde{\xi}|}{t}}^{+\infty} dz \, G(x_{1}+z, x_{2}+y_{2}, \tau) G(x_{1}-z, x_{2}-y_{2}, \tau) \nonumber\\&
\times \frac{1}{z^{a+1}} \frac{\sqrt{1 + \sqrt{1 - \left(\frac{|\tilde{\xi}|}{t z}\right)^{2}}} + \sqrt{1 - \sqrt{1 - \left(\frac{|\tilde{\xi}|}{t z}\right)^{2}}}}{\sqrt{1 - \left(\frac{|\tilde{\xi}|}{t z}\right)^{2}}}  
\end{align}
We now recall the approximation for $G$ in terms of the  profile $\sigma$  given as in \eqref{eq:finalGappr_sec5}, namely
\begin{equation*}
G(\xi_1, \xi_2,\tau)\sim  \delta(\xi_1-\xi_2) Q(\xi_2,\tau)\sim \delta(\xi_1-\xi_2) \frac { \lambda(\tau_0)} {|\xi_2|} U_0 \Sigma(\xi_2,\tau)= \delta(\xi_1-\xi_2)\frac {\lambda(\tau_0)} {|\xi_2|}U_0 \sigma\left(\frac{\xi_2}{\varepsilon(\tau)}, \tau\right)\,,
\end{equation*} 
and also the fact that the time $\tau_0=\tau_0(\tau, \xi)$ (cf.~\eqref{def:tau0in}). 
Using this approximation 
in \eqref{eq:Fbdexp} we then obtain
\begin{align}\label{eq:Fbdexp2}
& F(0^{+}, \xi_{2}, \xi_{3}, \tau) = \nonumber\\&    
\frac{{8}\sqrt{2}}{2^{a}} \frac{1}{ \left(\xi_{2} |\tilde{\xi}|\, t\right)} 
\int_{\mathbb{R}} dx_{1} \int_{\mathbb{R}} dx_{2} \int_{\frac{|\tilde{\xi}|}{t}}^{+\infty} dz \frac{(\lambda(\tau_0))^{2} U_0^2}{|x_{2}+z||x_{2}-z|} 
\delta(x_{1}+z-x_{2}-y_{2}) \delta(x_{1}-z-x_{2}+y_{2})
 \notag \\&
\qquad \times \frac{1}{z^{a+1}} 
\sigma\left(\frac{x_{2}+y_{2}}{\varepsilon(\tau)}, \tau\right) \sigma\left(\frac{x_{2}-y_{2}}{\varepsilon(\tau)}, \tau\right) 
\frac{\sqrt{1 + \sqrt{1 - \left(\frac{|\tilde{\xi}|}{t z}\right)^{2}}} + \sqrt{1 - \sqrt{1 - \left(\frac{|\tilde{\xi}|}{t z}\right)^{2}}}}{\sqrt{1 - \left(\frac{|\tilde{\xi}|}{t z}\right)^{2}}}\,.
\end{align}
Since the product of the Dirac deltas simplifies as:
\begin{equation*}
\delta(x_{1}+z-x_{2}-y_{2}) \delta(x_{1}-z-x_{2}+y_{2}) = \delta(2(z-y_{2})) \delta(x_{1}-z-x_{2}+y_{2})=\frac{1}{2} \delta(z-y_{2}) \delta(x_{1}-x_{2})\, ,
\end{equation*}
from \eqref{eq:Fbdexp2}
it then follows
\begin{align*}
    F(0^{+}, \xi_{2}, \xi_{3}, \tau) =&    
 \frac{8\sqrt{2}}{2^{a}} \frac{1}{ \left(\xi_{2} |\tilde{\xi}|\, t\right)} 
 \int_{\mathbb{R}} dx_{1} \int_{\mathbb{R}} dx_{2} \int_{\frac{|\tilde{\xi}|}{t}}^{+\infty} dz \frac{(\lambda(\tau_0))^{2} U_0^2}{|x_{2}+z||x_{2}-z|} \frac{1}{2} 
 \delta(z-y_{2}) \delta(x_{1}-x_{2})
 \\&
\qquad \times \frac{1}{z^{a+1}} 
\sigma\left(\frac{x_{2}+y_{2}}{\varepsilon(\tau)}, \tau\right) \sigma\left(\frac{x_{2}-y_{2}}{\varepsilon(\tau)}, \tau\right) 
\frac{\sqrt{1 + \sqrt{1 - \left(\frac{|\tilde{\xi}|}{t z}\right)^{2}}} + \sqrt{1 - \sqrt{1 - \left(\frac{|\tilde{\xi}|}{t z}\right)^{2}}}}{\sqrt{1 - \left(\frac{|\tilde{\xi}|}{t z}\right)^{2}}}\, .
\end{align*}
Thus, integrating over $x_1$ and $y_2$, we obtain
\begin{align}
\label{eq:Fbdexp4}
F(0^{+}, \xi_{2}, \xi_{3}, \tau) = \frac{4\sqrt{2}}{2^{a}} \frac 1 {\left(\xi_{2} |\tilde{\xi}|\, t\right)} \int_{\mathbb{R}} dx_{2} \int_{\frac{|\tilde{\xi}|}{t}}^{+\infty} dz \frac{(\lambda(\tau_0))^{2} U_0^2}{|x_{2}+z||x_{2}-z|} \frac{1}{z^{a+1}} \sigma\left(\frac{x_{2}+z}{\varepsilon(\tau)}, \tau\right) \sigma\left(\frac{x_{2}-z}{\varepsilon(\tau)}, \tau\right)\notag \\
 \times \frac{\sqrt{1 + \sqrt{1 - \left(\frac{|\tilde{\xi}|}{t z}\right)^{2}}} + \sqrt{1 - \sqrt{1 - \left(\frac{|\tilde{\xi}|}{t z}\right)^{2}}}}{\sqrt{1 - \left(\frac{|\tilde{\xi}|}{t z}\right)^{2}}}
\end{align}
Performing the change of variables 
$ x_2= \varepsilon(\tau)\eta $ and $ z= \varepsilon(\tau)\zeta $ in \eqref{eq:Fbdexp4} then yields
\begin{align}\label{eq:Fbdexp5}
F(0^{+}, \xi_{2}, \xi_{3}, \tau) = \frac{4\sqrt{2}}{2^{a}}\frac {U_0^2} {\left(\xi_{2} |\tilde{\xi}|\, t\right)} \frac{(\lambda(\tau_0))^{2}}{(\varepsilon(\tau))^{a+1}} \int_{\mathbb{R}} d\eta \int_{\frac{|\tilde{\xi}|}{t \varepsilon(\tau)}}^{+\infty} \frac{d\zeta }{\zeta^{a+1}}  \frac{\sigma(\eta+\zeta, \tau)\,\sigma(\eta-\zeta, \tau)}{\vert \eta+\zeta\vert \, \vert \eta-\zeta\vert } \notag 
\\
\times \frac{\sqrt{1 + \sqrt{1 - \left(\frac{|\tilde{\xi}|}{t \varepsilon(\tau) \zeta}\right)^{2}}} + \sqrt{1 - \sqrt{1 - \left(\frac{|\tilde{\xi}|}{t \varepsilon(\tau) \zeta}\right)^{2}}}}{\sqrt{1 - \left(\frac{|\tilde{\xi}|}{t \varepsilon(\tau) \zeta}\right)^{2}}}
\end{align}
Moreover, using
\eqref{eq:lamHbar9} and \eqref{eq:lamHbar10}, i.e. 
$\lambda(\tau)   \sim \frac{\left( \lambda(\tau_0)\right)^2 U_0^2  }{\left(\varepsilon(\tau)\right)^{a}}  \frac{(1-a)}{a}
$  for $|\xi|\approx \varepsilon(\tau)$, 
we obtain  $\frac{\left( \lambda(\tau_0)\right)^2 U_0^2  }{\left(\varepsilon(\tau)\right)^{a}}\sim  \lambda(\tau) \frac{a}{(1-a)}$ which allows to simplify the formula \eqref{eq:Fbdexp5} that thus becomes
\begin{align}\label{eq:Fbdexp5bis}
F(0^{+}, \xi_{2}, \xi_{3}, \tau) = \frac{4\sqrt{2}}{2^{a}}\frac {1} {\left(\xi_{2} |\tilde{\xi}|\, t\right)}  \frac{a}{(1-a)} \frac{\lambda(\tau)}{\varepsilon(\tau)} \int_{\mathbb{R}} d\eta \int_{\frac{|\tilde{\xi}|}{t \varepsilon(\tau)}}^{+\infty} \frac{d\zeta }{\zeta^{a+1}}  \frac{\sigma(\eta+\zeta, \tau)\,\sigma(\eta-\zeta, \tau)}{\vert \eta+\zeta\vert \, \vert \eta-\zeta\vert } \notag 
\\
\times \frac{\sqrt{1 + \sqrt{1 - \left(\frac{|\tilde{\xi}|}{t \varepsilon(\tau) \zeta}\right)^{2}}} + \sqrt{1 - \sqrt{1 - \left(\frac{|\tilde{\xi}|}{t \varepsilon(\tau) \zeta}\right)^{2}}}}{\sqrt{1 - \left(\frac{|\tilde{\xi}|}{t \varepsilon(\tau) \zeta}\right)^{2}}}
\end{align} 
We notice that in Section \ref{sec:asymG}, we obtained the asymptotics of the solution $G(,\xi_1,\xi_2,\tau)$ to the reduced model, reformulating the Boltzmann equation as the boundary value  problem 
\eqref{eq:BV_G_a}, 
\eqref{eq:bdG_a}. The boundary value $G(0^+,\xi_1,\xi_2,\tau)$ was written in the self-similar form \eqref{def:Gbd_selfsim}, (see also \eqref{eq:ansatzH}). It is then natural to expect that, also in this case, we could write $F(0^+,\xi_2,\xi_3,\tau)$ in self-similar form, proportional to the function $\lambda((\tau)$. In fact, we observe that in the equation \eqref{eq:Fbdexp5} for $F(0^+,\xi_2,\xi_3,\tau)$  the factor
$(\lambda((\tau_0))^2$ appears as a consequence of the fact that we obtain $F(0^+,\xi_2,\xi_3,\tau)$ using the boundary condition and the gain term $Q_{+}(F,F)$, that can be written in terms of quadratic terms of $G$. This implies that the boundary value $F(0^+,\xi_2,\xi_3,\tau)$  is proportional to  $(\lambda((\tau_0))^2$. In Section \ref{sec:asymG} we obtained relations between the functions $\lambda(\tau_0), \lambda(\tau), \varepsilon(\tau)$. Using these relations we can then rewrite the boundary value $F(0^+,\xi_2,\xi_3,\tau)$  as a function proportional to $\lambda(\tau)$, similarly to what has been done in Section \ref{sec:asymG} for $G(0^+,\xi_2,\tau)$.

We introduce the auxiliary function $\Omega(\zeta, \tau)$ defined as:
\begin{equation}\label{def:Omegazeta}
\Omega(\zeta, \tau) \equiv \int_{\mathbb{R}} d\eta \frac{\sigma(\eta+\zeta, \tau) \sigma(\eta-\zeta, \tau)}{|\eta+\zeta| |\eta-\zeta|},\quad  \zeta>0\,.
\end{equation}
We will assume that $\sigma(\eta,\tau)$ approaches a steady state $\sigma(\eta)$ (cf.~\eqref{eq:pde_b_stat}). Thus, also $\Omega(\zeta, \tau)=\Omega(\zeta)$.  
Additionally, we notice that $\Omega(\zeta)$ is well-defined provided $\sigma(s)$ vanishes sufficiently fast as $s \to \infty$ and behaves as $\sigma(s)\leq C s^\beta$, with $\beta \geq a$ near the origin, as $s \to 0$.

Using  \eqref{def:Omegazeta}, we can then rewrite \eqref{eq:Fbdexp5} as 
\begin{align}\label{eq:Fbdexp6}
F(0^{+}, \xi_{2}, \xi_{3}, \tau) = \frac{4\sqrt{2}}{2^{a}}\frac {1} {\left(\xi_{2} |\tilde{\xi}|\, t\right)}   \frac{a}{(1-a)} \frac{\lambda(\tau)}{\varepsilon(\tau)} 
\int_{\frac{|\tilde{\xi}|}{t \varepsilon(\tau)}}^{+\infty} \frac{d\zeta}{\zeta^{a+1}} \Omega(\zeta, \tau)\notag \\
\times \frac{\sqrt{1 + \sqrt{1 - \left(\frac{|\tilde{\xi}|}{t \varepsilon(\tau) \zeta}\right)^{2}}} + \sqrt{1 - \sqrt{1 - \left(\frac{|\tilde{\xi}|}{t \varepsilon(\tau) \zeta}\right)^{2}}}}{\sqrt{1 - \left(\frac{|\tilde{\xi}|}{t \varepsilon(\tau) \zeta}\right)^{2}}}
\end{align}
Setting
$
r=\frac{|\widetilde{\xi}|}{t\varepsilon(\tau)}$ we introduce 
\begin{align}\label{def:Wr}
W(r):=\int_{r}^{\infty}\frac{d\zeta}{\zeta^{a+1}}\frac{\sqrt{1 + \sqrt{1 - \left(\frac{r}{\zeta} \right)^{2}}} + \sqrt{1 - \sqrt{1 - \left(\frac{r}{\zeta}\right)^{2}}}}{\sqrt{1 - \left(\frac{r}{\zeta}\right)^{2}}}\,\Omega(\zeta)\,. 
\end{align}
Hence, using \eqref{def:Wr}, \eqref{eq:Fbdexp6} reads as
\begin{align}\label{eq:Fbdexp7}
F(0^{+}, \xi_{2}, \xi_{3}, \tau) = \frac{4\sqrt{2}}{2^{a}}\frac {e^{-\tau}} {\left(\xi_{2} |\tilde{\xi}|\right)} {\frac{a}{(1-a)} \frac{\lambda(\tau)}{\varepsilon(\tau)} } W\left(\frac{|\widetilde{\xi}|}{e^{\tau}\,\varepsilon(\tau)}\right) \, .
\end{align} 
Combining \eqref{eq:Fbdexp7} (with $\tau$ replaced by $\tau_0$) with \eqref{eq:solFchar_s7_1}
\begin{equation}\label{eq:solFchar_s7_2}
F(\xi_1,\xi_2,\xi_3,\tau)  =\frac{4\sqrt{2}}{2^{a}} \frac {e^{-\tau_0}}{\left(\xi_{2,0} |\tilde{\xi}_0|\, \right)} \frac{a}{(1-a)} \frac{\lambda(\tau_0)}{\varepsilon(\tau_0)}  W\left(\frac{|\widetilde{\xi}_0|}{e^{\tau_0}\varepsilon(\tau _0)}\right) 
\exp \left(- \int_{\tau_0}^{\tau} ds\,  \Lambda  \left( \frac {\xi_1(s)}{\varepsilon(\tau)} \right)  \right)  \left(1-\frac{\xi_1}{\xi_2}\right)^{-\left(\frac{3}{a}-2\right)}, 
\end{equation} 
where $\tilde{\xi}_0=(\xi_{2,0}, \xi_{3,0})$. 
Using now \eqref{eq:xi1F}-\eqref{eq:xi3F} we have $\xi_{2,0}=\frac{\xi_2^{\frac{1}{a}-1}}{(\xi_2-\xi_1)^{\frac{1}{a}-1}} $ and $\xi_{3,0}=\frac{\xi_2^{\frac{1}{a}-1}\xi_3}{(\xi_2-\xi_1)^{\frac{1}{a}-1}}$ can rewrite 
$$\tilde{\xi}_0=(\xi_{2,0}, \xi_{3,0})= \frac{\xi_2^{\frac{1}{a}-1}}{(\xi_2-\xi_1)^{\frac{1}{a}-1}} (\xi_{2}, \xi_{3})$$ 
and 
$$ \vert \tilde{\xi}_0 \vert = \frac{\xi_2^{\frac{1}{a}-1}}{(\xi_2-\xi_1)^{\frac{1}{a}-1}} \vert \tilde{\xi}\vert $$
so that \eqref{eq:solFchar_s7_2} becomes 
\begin{align}\label{eq:solFchar_s7_3}
F(\xi_1,\xi_2,\xi_3,\tau)  =&\frac{4\sqrt{2}}{2^{a}}\frac {e^{-\tau}}{\xi_{2} |\tilde{\xi}|} \frac{a}{(1-a)}\frac{\lambda(\tau_0)}{\varepsilon(\tau_0)} \, e^{\tau-\tau_0} \,\left(\frac {\xi_2} {\xi_2-\xi_1}\right)^{\frac{1}{a}}     \nonumber \\& 
\quad \times W \left( \frac{\xi_2^{\frac{1}{a}-1} \vert \tilde{\xi}\vert \,e^{\tau-\tau_0}}{(\xi_2-\xi_1)^{\frac{1}{a}-1}  \, e^{\tau}\varepsilon(\tau_0)}\right) 
\exp \left(- \int_{\tau_0}^{\tau} ds\, \Lambda   \left( \frac {\xi_1(s)}{\varepsilon(\tau)} \right)  ds \right) \nonumber \\&
= \frac{4\sqrt{2}}{2^{a}}\frac {e^{-\tau}}{\xi_{2} |\tilde{\xi}|} \frac{a}{(1-a)}\frac{\lambda(\tau_0)}{\varepsilon(\tau_0)}  \,\left(\frac {\xi_2} {\xi_2-\xi_1}\right)^{\frac{1}{a}+1}    \nonumber \\& 
\quad \times 
W \left( \frac{ \xi_2^{\frac{1}{a}} \,\vert \tilde{\xi}\vert }{(\xi_2-\xi_1)^{\frac{1}{a}}  \, e^{\tau}\varepsilon(\tau_0)}\right) 
\exp \left(- \int_{\tau_0}^{\tau} ds\, \Lambda \left( \frac {\xi_1(s)}{\varepsilon(\tau)} \right)  ds \right) 
\end{align}
where in the second equality we used that $e^{\tau-\tau_0}=\frac{\xi_2}{\xi_2-\xi_1}$ (cf.~\eqref{eq:xi1xi2_F}). 
\smallskip

We now define 
\begin{equation}\label{def:sigmastar}
    \sigma_{\ast}(\xi_1,\xi_2 , \tau):=
\exp \left(- \int_{\tau_0}^{\tau} ds\, \Lambda   \left( \frac {\xi_1(s)}{\varepsilon(\tau)} \right)  \right) 
\end{equation}
and, for easy reading ,we set
\begin{equation}\label{eq:auxmathcalAdef}
\mathcal{I}:=\frac{4\sqrt{2}}{2^{a}} \, W \left( \frac{ \xi_2^{\frac{1}{a}} \,\vert \tilde{\xi}\vert }{(\xi_2-\xi_1)^{\frac{1}{a}}  \, e^{\tau}\varepsilon(\tau_0)}\right) \frac{\xi_2^{\frac 1 a +1}} {(\xi_2-\xi_1)^{\frac{1}{a}+1}e^{\tau}\varepsilon(\tau_0)} \frac 1 {\vert \tilde{\xi}\vert\,\xi_2}
\end{equation}
so that \eqref{eq:solFchar_s7_3} becomes
\begin{equation}
\label{eq:solFchar_s7_4a}
F(\xi_1,\xi_2,\xi_3,\tau)  = \frac{a}{(1-a)} \lambda(\tau_0) \,\mathcal{I}\, \sigma_{\ast}\left( \xi_1,\xi_2 ,\tau\right)
\end{equation} 
We now rewrite $\mathcal{I}$ given as in \eqref{eq:auxmathcalAdef} as follows:
\begin{align}\label{eq:auxmathcalA}
    & \mathcal{I}
    = \frac{4\sqrt{2}}{2^{a}} \,
    W \left( \left( 
    \frac{\xi_2 \vert \tilde{\xi}\vert^{a} }{(\xi_2-\xi_1) \, e^{a \tau}(\varepsilon(\tau_0))^{a}}\right)^{\frac{1}{a}} \right)
    \left( 
    \frac{\xi_2 \vert \tilde{\xi}\vert^{a} }{(\xi_2-\xi_1) \, e^{a \tau}(\varepsilon(\tau_0))^{a}}\right)^{\frac 1 a +1}
    \frac {1}{
     \vert \tilde{\xi}\vert ^{a+2}\, \xi_2} 
\end{align}
In order to simplify the formula we now set 
$$s= \frac{(\xi_2-\xi_1) \, e^{a \tau}(\varepsilon(\tau_0))^{a}}{\xi_2 \vert \tilde{\xi}\vert^{a} }$$
and define 
\begin{equation}
    \label{def:tildeW_F}
    \widetilde{W}(s):= \frac{4 \sqrt{2}}{2^{a}}W\left( s ^{{-\frac 1 a }}\right)   s ^{-\left(\frac 1 a+1\right)}
\end{equation}
so that we can rewrite \eqref{eq:auxmathcalA} as 
\begin{equation}\label{eq:auxiliaryA_2}
\mathcal{I}=\frac{e^{a  \tau}\,(\varepsilon(\tau_0))^{a}}  { \vert \tilde{\xi}\vert ^{a+2}\, \xi_2}    \widetilde{W}\left( \frac{(\xi_2-\xi_1) \, e^{a \tau}(\varepsilon(\tau_0))^{a}}{\xi_2 \vert \tilde{\xi}\vert^{a} }\right) \,.
\end{equation}
We expect that, if $W_0:=\int_{0}^\infty \, \widetilde{W}(s) ds <\infty$, we can approximate the r.h.s. of \eqref{eq:auxiliaryA_2}
by a Dirac delta. Notice that the integrability of the function $\widetilde{W}(s)$ will be studied in Section \ref{ssec:asymp_auxiliary} (cf. Lemma \ref{lem:tildeWr}). 
 More precisely, the following approximation holds true:
\begin{equation}\label{eq:sizeDiracApprox}
e^{a  \tau}\,(\varepsilon(\tau_0))^{a} \;    \widetilde{W}\left( \frac{(\xi_2-\xi_1) \, e^{a \tau}(\varepsilon(\tau_0))^{a}}{\xi_2 \vert \tilde{\xi}\vert^{a} }\right) 
    \sim 
W_0\, \delta\left(\frac {(\xi_2-\xi_1)}{\xi_2 \vert \tilde{\xi}\vert^{a} } \right)=W_0\, \xi_2 \vert \tilde{\xi}\vert^{a}\,  \delta\left(\xi_2-\xi_1  \right) \quad \text{as} \ \ \tau\to \infty\,,
\end{equation}
where $W_0=\int_{0}^\infty \, \widetilde{W}(s) ds <\infty$. 
Therefore, using  \eqref{eq:sizeDiracApprox} in \eqref{eq:auxiliaryA_2} we obtain 
\begin{equation}\label{eq:auxiliaryA_3}
     \mathcal{I}\sim  \frac {W_0\,} { \vert \tilde{\xi}\vert ^2}  \delta\left(\xi_2-\xi_1  \right)
 \end{equation}
 and then, using \eqref{eq:auxiliaryA_3} in \eqref{eq:solFchar_s7_4a}, it follows
 \begin{equation}
\label{eq:solFchar_s7_5}
F(\xi_1,\xi_2,\xi_3,\tau)  \sim 
\frac a {1-a}\,\lambda(\tau_0)  
\frac {W_0\,} { \vert \tilde{\xi}\vert ^2}  \delta\left(\xi_2-\xi_1  \right)\sigma_{\ast}\left( \xi_1,\xi_2 ,\tau\right)
\end{equation}
which holds true for $|\xi| \lesssim \varepsilon(\tau)e^\tau$.

We further notice that, in the collision region, i.e. in the region where $|\xi| \approx \varepsilon(\tau)$,
we can use that  $  \lambda(\tau)   \sim   (1-a)\, \lambda(\tau_0)$ (see \eqref{eq:lamHbar11}) and thus $  \lambda(\tau_0)   \sim    \frac{\lambda(\tau)}{(1-a)}$. Thus,  in the collision region, \eqref{eq:solFchar_s7_5} becomes 
\begin{equation}
 F(\xi_1,\xi_2,\xi_3,\tau)  \sim \frac{a}{(1-a)^2} \lambda(\tau)\frac {W_0\,} { \vert \tilde{\xi}\vert ^2}  \delta\left(\xi_2-\xi_1  \right)\sigma_{\ast}\left( \xi_1,\xi_2 ,\tau\right) \,.
\end{equation}

 \medskip

 We now examine the cutoff properties of the function $\sigma_{\ast}(\xi_1,\xi_2 , \tau)$ introduced in \eqref{def:sigmastar}. We first consider the case  of the external region $|\xi |\gtrsim e^{\tau}\varepsilon(\tau)$. 
From the definition of the function $\Lambda(z)$ given as in \eqref{def:Mz_sec7}, a direct computation implies the following estimate
\begin{equation}\label{est:Mzasym} 
\Lambda( z)  = 8\pi\int_{\mathbb{R}} d\eta  \, \left| z -\eta\right|^{-a}\frac{\sigma(\eta)}{|\eta|} \leq C \, \frac{\log(|z|)}{|z|^a}\,.
\end{equation}
with the constant $C>0$. Therefore, 
due to the fact that $\xi_1(s)$ (see \eqref{eq:xi1F}) increases exponentially as $\tau-s\to \infty $, it then follows that the main contribution 
to the integral on the r.h.s. of  \begin{equation}\label{eq:approxsigmastar}
   \sigma_{\ast}(\xi_1,\xi_2 , \tau)=\exp \left(-\int_{\tau_0}^{\tau} ds  \Lambda  \left( \frac {\xi_1(s)}{\varepsilon(\tau)} \right)  \right)
   \end{equation}
 is due to the range of values of $s$ in which $\tau-s=O(1)$ for $\vert \xi \vert \lesssim 1$.   
From the equation for the characteristics (see \eqref{eq:xi1F}) we have 
\begin{equation}\label{eq:xi1xi2_sec7}
\xi_1(s)=\frac{\xi_2^{\frac 1 a}}{(\xi_2-\xi_1)^{\frac 1 a -1}} e^{-{\frac 1 a} (s-\tau_0)}\left(e^{s-\tau_0}-1\right)
\end{equation}
Using  \eqref{eq:xi1xi2_sec7} in \eqref{eq:approxsigmastar} we then obtain  
\begin{align}\label{eq:approxsigmastar2}
    \sigma_{\ast}(\xi_1,\xi_2 , \tau)&= 
    \exp \left(- 
    \int_{\tau_0}^{\tau} ds  \,\Lambda  \left( \frac {\xi_1(s)}{\varepsilon(\tau)} \right)  \right)\nonumber \\&  = 
    \exp \left(-
    \int_{\tau_0}^{\tau} ds  \,\Lambda  \left( 
    \frac{\xi_2^{\frac 1 a}}{(\xi_2-\xi_1)^{\frac 1 a -1}} e^{-{\frac 1 a} (s-\tau_0)}\left(e^{s-\tau_0}-1\right) \frac{e^{-{\frac 1 a} (s-\tau_0)}\left(e^{s-\tau_0}-1\right)}{\varepsilon(\tau)} \right)  \right)\nonumber \\&
= \exp \left(-
\int_{0}^{\tau-\tau_0} d\sigma  \,\Lambda  \left( 
    \frac{\xi_2^{\frac 1 a}}{(\xi_2-\xi_1)^{\frac 1 a -1}} \frac{e^{-{\frac 1 a} \sigma}\left(e^{\sigma}-1\right) }{\varepsilon(\tau)} \right) \right) \nonumber \\&
= \exp \left(- 
\int_{0}^{\tau-\tau_0} d\sigma  \, \Lambda \left( 
    \frac{\xi_2^{\frac 1 a}}{(\xi_2-\xi_1)^{\frac 1 a -1}} \frac{e^{(1-{\frac 1 a})\sigma}\left(1-e^{-\sigma}\right) }{\varepsilon(\tau)} \right) \right) 
\end{align}
where in the third line we used the change of variable $s=\tau_0+\sigma$. We now perform a further change of variable $\sigma=(\tau-\tau_0)-u$ with $d\sigma=-du$ so that the r.h.s. of \eqref{eq:approxsigmastar2} becomes
 \begin{align}\label{eq:approxsigmastar3}
    \sigma_{\ast}(\xi_1,\xi_2 , \tau)=
    \exp \left(-    \int_{0}^{\tau-\tau_0} du  \, \Lambda \left( 
    \frac{\xi_2^{\frac 1 a}}{(\xi_2-\xi_1)^{\frac 1 a -1}} \frac{e^{(1-{\frac 1 a})(\tau-\tau_0)}e^{-(1-{\frac 1 a})u }\left(1-e^{-(\tau-\tau_0)}e^{u}\right) }{\varepsilon(\tau)} \right) 
    \right) 
\end{align}
We observe that 
\begin{align*}
     & \frac{\xi_2^{\frac 1 a}}{(\xi_2-\xi_1)^{\frac 1 a -1}} \frac{e^{(1-{\frac 1 a})(\tau-\tau_0)}e^{-(1-{\frac 1 a}u) }\left(1-e^{-(\tau-\tau_0)}e^{u}\right) }{\varepsilon(\tau)} \nonumber \\&
     \sim 
     \frac{\xi_2^{\frac 1 a}}{(\xi_2-\xi_1)^{\frac 1 a -1}} \left(\frac {\xi_2}{\xi_2-\xi_1}\right)^{(1-{\frac 1 a}) } \frac {e^{({\frac 1 a}-1)u }} {\varepsilon(\tau)} \left( 1- \frac {(\xi_2-\xi_1) e^{u}} {\xi_2} e^{u}\right)\nonumber \\&
           \sim \frac {e^{({\frac 1 a}-1)u }} {\varepsilon(\tau)} \left(  \xi_2-(\xi_2-\xi_1) e^{u}\right)
\end{align*}
where  we used that $e^{(\tau-\tau_0)}=\frac {\xi_2}{\xi_2-\xi_1}$.
 Then, \eqref{eq:approxsigmastar3} becomes 
\begin{align}\label{eq:approxsigmastar4}
    \sigma_{\ast}(\xi_1,\xi_2 , \tau)     \sim  \exp \left(- 
    \int_{0}^{\infty} du   \, \Lambda \left( 
    \frac {e^{({\frac 1 a}-1)u }} {\varepsilon(\tau)} \left(  \xi_2-(\xi_2-\xi_1) e^{u}\right)
    \right) \right)\,. 
\end{align}
We expect the term $(\xi_2-\xi_1) e^{u}$ to be small, because there is a term yielding the Dirac delta function $ (\xi_2-\xi_1) e^{u}$. Given that the width of the region where the function approximating the Dirac is supported 
is of order $e^{-b\tau_0}$ with $b=\frac a {1-a}>0$ (cf.\eqref{eq:sizeDiracApprox})  then $ (\xi_2-\xi_1) e^{u}$ would become relevant for values $e^u\sim  e^{b\tau_0}$ but, due to the term $e^{({\frac 1 a}-1)u }$ and the fast decay of $ \Lambda(z)$ as $z\to \infty$  we expect the contribution of this term to be small as $\tau_0\to \infty$. 

Therefore, in \eqref{eq:solFchar_s7_3} we can approximate  the function the function $\sigma_{\ast}(\xi_1,\xi_2 , \tau)$, using also that $\varepsilon(\tau)$ is a slowly varying function and the main contribution to the integral is due to the region where $u=O(1)$, whence $\varepsilon(\tau-u)\sim \varepsilon(\tau)$,  as 
\begin{align}\label{eq:approxsigmastar5}
    \sigma_{\ast}(\xi_1,\xi_2 , \tau)\sim \exp \left(-
    \int_{0}^{\tau-\tau_0} du   \,\Lambda  \left( 
    \frac {e^{({\frac 1 a}-1)u }} {\varepsilon(\tau)}   \xi_2
    \right) \right)\,. 
\end{align}
Since the r.h.s. of \eqref{eq:approxsigmastar5} depends only on the second component of the variable $\xi_2$ with a slight abuse of notation we will replace from now on the function $ \sigma_{\ast}(\xi_1,\xi_2 , \tau)$ by a new function $ \sigma_{\ast}(\xi_2 , \tau)$. 
We now observe that from \eqref{eq:approxsigmastar5} we recover the function $\Sigma(\xi_1,\tau)$ given as in \eqref{eq:sigma2}, with the approximation of the characteristics  \eqref{eq:approx_xi1}.

\smallskip 

\subsection{Asymptotics for the auxiliary functions $\Omega(\zeta)$ (cf.~\eqref{def:Omegazeta}), $W(r)$ (cf.~\eqref{def:Wr}) and $\widetilde{W}(s)>0$ (cf.~\eqref{def:tildeW_F2})} \label{ssec:asymp_auxiliary}

We can prove the following results. 

\begin{lem}[Asymptotics of $\Omega(\zeta)$ for large values of $\zeta$] \label{lem:asymOmegainfty}
Let $\Omega(\zeta)$ be defined as in \eqref{def:Omegazeta}. The following asymptotic formula 
 holds when $\zeta\to \infty$: 
 \begin{equation}
\label{eq:Omegazeta_split2}
    \Omega(\zeta)=
      \frac{2}{\zeta} \left[\log (\zeta)+C_2\right] +
O\left(\frac{\log\zeta}{\zeta^{1+\omega}}\right)   \quad \text{as} \quad \zeta\to \infty\, ,
\end{equation}
 for some $\omega>0$ and where the constant $C_2\in \mathbb{R}$. 
\end{lem}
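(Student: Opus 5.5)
We work with the stationary cutoff profile $\sigma$ of \eqref{eq:pde_b_stat}, which is even, satisfies $0\le\sigma\le 1$, $\sigma(s)\to 1$ as $s\to\infty$ and $\sigma(s)\le C|s|^{m}$ near the origin with $m\ge a$. For the statement to hold with a power-law remainder, we also need a rate of approach to $1$. We will take this rate from the analysis of Section \ref{ss:cutoff}. From \eqref{est:Mzasym} we have $\Lambda(z)\lesssim \log|z|/|z|^a$. Integrating $(\frac1a-1)y\sigma'=\Lambda\sigma$ from $y$ to $\infty$ then gives $1-\sigma(y)=O(\log y/y^{a})$. So the expected input is $|1-\sigma(s)|\le C\,\frac{\log(2+|s|)}{(1+|s|)^{\omega_0}}$ for some $\omega_0>0$.

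By evenness of the integrand under $\eta\to-\eta$, we write
\[
\Omega(\zeta)=2\int_0^\infty d\eta\,\frac{\sigma(\eta+\zeta)\sigma(\eta-\zeta)}{|\eta+\zeta||\eta-\zeta|},
\]
and rescale $\eta=\zeta x$, which gives
\[
\Omega(\zeta)=\frac{2}{\zeta}\int_0^\infty \frac{\sigma(\zeta(x+1))\,\sigma(\zeta(x-1))}{(x+1)|x-1|}\,dx .
\]
If both $\sigma$ factors were replaced by $1$, the integral would diverge logarithmically at $x=1$. The cutoff $\sigma(\zeta(x-1))$ regularizes it at the scale $|x-1|\sim 1/\zeta$, and this is precisely where the $\log\zeta$ comes from. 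We therefore split the integral into three pieces.
\begin{enumerate}
\item[(i)] The region $|x-1|\le \zeta^{-1/2}$ (any intermediate power works). Here $\sigma(\zeta(x+1))=1+O(\zeta^{-\omega_0}\log\zeta)$ and $(x+1)^{-1}=\tfrac12+O(|x-1|)$. Substituting $y=\zeta(x-1)$ reduces this piece to $\tfrac12\int_{|y|\le\zeta^{1/2}}\sigma(y)\,dy/|y|$ plus small errors. Using $\sigma(y)/|y|=\mathbf{1}_{|y|>1}/|y|+\big(\sigma(y)-\mathbf{1}_{|y|>1}\big)/|y|$, with the second term integrable by the bounds at $0$ and $\infty$, this equals $\tfrac12\log\zeta+c_1+O(\zeta^{-\omega_0/2}\log\zeta)$. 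The errors are of size $O(\zeta^{-1/2})$ times a $\log$, coming from the $O(|x-1|)$ term against $1/|x-1|$.
\item[(ii)] The region $|x-1|>\zeta^{-1/2}$ with $x$ bounded away from $0$, say $x\ge\delta_\zeta$. Here we replace both $\sigma$ factors by $1$, with error $O(\zeta^{-\omega_0/2}\log\zeta)$ times $\int dx/\big((x+1)|x-1|\big)$, which is $O(\log\zeta)$. The remaining explicit integral $\int \frac{dx}{(x+1)|x-1|}$ over this region is computed in closed form and equals $\tfrac12\log\zeta+c_2+O(\zeta^{-1/2})$.
\item[(iii)] Near $x=0$, the integrand is bounded, so this piece is harmless and only contributes to the constant and to the error.
\end{enumerate}
Summing, $\tfrac12\log\zeta+\tfrac12\log\zeta=\log\zeta$, so $\Omega(\zeta)=\frac{2}{\zeta}[\log\zeta+C_2]+O(\zeta^{-1-\omega}\log\zeta)$ with $C_2=c_1+c_2$ and $\omega=\omega_0/2$.

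The main obstacle is step (i) combined with the required decay rate of $1-\sigma$. Matching the $\log$ requires controlling $\sigma$ near the origin (so that $\int_0^1\sigma(y)\,dy/y$ converges, which holds because $m\ge a>0$) and at infinity with a quantified rate. We would first try to derive that rate directly from the ODE and the bound \eqref{est:Mzasym}. If only a logarithmic rate were available, the remainder would degrade, and the choice of the splitting exponent $1/2$ would have to be re-tuned.
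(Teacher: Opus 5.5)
Your proof is correct, but you organize the computation differently from the paper.

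\textbf{The paper's route.} It replaces the far factor $\sigma(\zeta(x+1))$ by $1$ on the whole half-line at once, with error $O(\zeta^{-1-\omega})J(\zeta)$ and $J(\zeta)=O(\log\zeta)$. It then makes the single global substitution $y=\zeta(x-1)$, which turns the main term into the exact one-dimensional integral $\int_{-\zeta}^{\infty}\frac{\sigma(y)}{|y|(2+y/\zeta)}\,dy$. This is split at $|y|=\zeta$, $\log\zeta$ is read off as $\int_1^{\zeta}dy/y$, and the remaining pieces, including the odd correction coming from $\frac{1}{2+y/\zeta}-\frac12$ on $[-\zeta,\zeta]$, are shown to converge to constants at rate $\zeta^{-\omega}$.

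\textbf{Your route.} You introduce an overlap scale $|x-1|=h$, $h=\zeta^{-1/2}$. The inner region gives $\frac12\log\zeta$ through $\int_1^{\zeta^{1/2}}dy/y$, with $(x+1)^{-1}$ frozen at $\frac12$. The outer region gives the other $\frac12\log\zeta$ from the elementary integral $\int\frac{dx}{(x+1)|x-1|}=\log 2-\log h+O(h^2)$. This avoids the odd correction term and makes the constant explicit: $C_2=\int_0^1\sigma(y)\frac{dy}{y}+\int_1^\infty(\sigma(y)-1)\frac{dy}{y}+\log 2$. The price is that the remainder exponent is halved, which is harmless since the lemma only asks for some $\omega>0$.

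\textbf{What your proposal adds.} You justify the rate at which $\sigma\to 1$, which the paper simply posits as $\sigma(s)=1+O(s^{-\omega})$. Integrating $-\log\sigma(y)=\int_y^\infty\widetilde{\Lambda}[\sigma](u)\,du/u$ against \eqref{est:Mzasym} does give $1-\sigma(y)=O(y^{-a}\log y)$, so $\omega_0=a$ up to a logarithm.

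\textbf{Two minor remarks.}
\begin{enumerate}
\item Region (iii) is superfluous. Nothing happens at $x=0$, and replacing both factors by $1$ in (ii) works all the way down to $x=0$.
\item In (ii), bounding $|1-\sigma(\zeta(x-1))|$ by its supremum times the $O(\log\zeta)$ integral gives $\zeta^{-\omega_0/2}\log^2\zeta$, not a single logarithm. To keep $\omega=\omega_0/2$ as stated, substitute $y=\zeta(x-1)$ and bound $\int_{|y|>\zeta^{1/2}}|1-\sigma(y)|\,dy/|y|=O(\zeta^{-\omega_0/2}\log\zeta)$ directly. Alternatively, just take $\omega$ slightly smaller.
\end{enumerate}
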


\begin{lem}[Asymptotics of $\Omega(\zeta)$ for small values of $\zeta$]
\label{lem:asymOmega0}
Let $\beta\in \mathbb{R}$ be given as in \eqref{eq:thmasym} (see Theorem \ref{thm:sigma}), i.e.\begin{equation}\label{eq:thmasym_2}
\lim_{y\to 0^+} \frac{\log(\sigma(y))}{\log(y)}=\beta  \quad \text{with} \ \ \beta \geq a\,.
\end{equation} 
Let $\Omega(\zeta)$ be defined as in \eqref{def:Omegazeta}. The following asymptotic formulas hold when $\zeta\to 0$. 
\begin{itemize}
    \item When $\beta<\tfrac 1 2$ we have 
\begin{equation}\label{Omega_b<12}
        \lim_{\zeta \to 0}\frac{\log\left(\Omega(\zeta)\right)}{\log(\zeta)}= 2\beta-1\,. 
    \end{equation}
\item When $\beta>\tfrac 1 2$ we have 
\begin{equation}\label{Omega_b>12}
    \lim_{\zeta \to 0} \Omega(\zeta)=c_2
\end{equation}
     where the constant $c_2$ is given by 
     \begin{equation}\label{def:c_2}
    c_2:=2\int_{0}^{\infty}\frac{(\sigma(\xi))^2}{\xi^2}d\xi
    < \infty\,.
    \end{equation}
\end{itemize}

\end{lem}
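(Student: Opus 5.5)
The plan is to rescale the integration variable to the scale of the singularities $\eta=\pm\zeta$, and to use only three properties of the steady cutoff profile $\sigma$ from \eqref{eq:pde_b_stat} and Theorem \ref{thm:sigma}. First, $\sigma$ is even and satisfies $0<\sigma\le 1$; this holds because $\sigma$ is an exponential of a negative quantity, cf. \eqref{eq:sigma3}. Second, $\sigma(y)\to 1$ as $y\to\infty$. Third, the power law \eqref{eq:thmasym_2} holds. From \eqref{eq:thmasym_2}, for every $\delta>0$ there is $y_\delta>0$ with
\[
y^{\beta+\delta}\le \sigma(y)\le y^{\beta-\delta}\quad\text{for } 0<y<y_\delta .
\]
Combined with $\sigma\le 1$, this gives the global bound $\sigma(y)\le C_\delta \min\{1,|y|^{\beta-\delta}\}$. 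Throughout, $\zeta\to 0^+$, and the change of variables $\eta=\zeta x$ gives
\[
\Omega(\zeta)=\frac1\zeta\int_{\mathbb{R}}\frac{\sigma(\zeta(x+1))\,\sigma(\zeta(x-1))}{|x+1|\,|x-1|}\,dx .
\]

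\textbf{Case $\beta<\tfrac12$.} For the upper bound, choose $\delta$ small so that $2(\beta-\delta)-2<-1$ and $\beta-\delta>0$. Split the $x$-integral into the regions $|x|\le 2/\zeta$ and $|x|>2/\zeta$.
\begin{itemize}
\item On $|x|\le 2/\zeta$, use $\sigma(\zeta(x\pm1))\le C\zeta^{\beta-\delta}|x\pm1|^{\beta-\delta}$. The remaining kernel $|x+1|^{\beta-\delta-1}|x-1|^{\beta-\delta-1}$ is integrable near $x=\pm1$ and at infinity. This region therefore contributes $O(\zeta^{2\beta-2\delta-1})$.
\item On $|x|>2/\zeta$, use $\sigma\le1$ and $|x\pm1|\ge |x|/2$. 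This gives $\zeta^{-1}\int_{|x|>2/\zeta}4x^{-2}\,dx=O(1)$, which is negligible since $2\beta-2\delta-1<0$.
\end{itemize}
For the lower bound, restrict the integral to $x\in[2,3]$. For $\zeta$ small, both arguments $\zeta(x\pm1)$ lie in $(\zeta,4\zeta)\subset(0,y_\delta)$, so the integrand is at least $c\,\zeta^{2(\beta+\delta)}$. Hence $\Omega(\zeta)\ge c\,\zeta^{2\beta+2\delta-1}$. Taking logarithms, dividing by $\log\zeta<0$, and letting $\delta\to0$ yields \eqref{Omega_b<12}.

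\textbf{Case $\beta>\tfrac12$.} Work in the original variable $\eta$ and split the line into the regions $|\eta|>2\zeta$ and $|\eta|\le2\zeta$.
\begin{itemize}
\item On $|\eta|>2\zeta$ we have $|\eta\pm\zeta|\ge|\eta|/2$. Fix $\delta$ with $\beta-\delta>\tfrac12$. The integrand is then dominated, uniformly in $\zeta$, by $C\min\{|\eta|^{2(\beta-\delta)-2},|\eta|^{-2}\}$, which is integrable on $\mathbb{R}$. By continuity of $\sigma$, the integrand converges pointwise to $\sigma(\eta)^2/\eta^2$. Dominated convergence then gives convergence of this part to $\int_{\mathbb{R}}\sigma(\eta)^2\eta^{-2}\,d\eta=c_2$, using evenness of $\sigma$ for the factor $2$ in \eqref{def:c_2}. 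The finiteness of $c_2$ follows from the same dominating function.
\item On $|\eta|\le2\zeta$, rescale $\eta=\zeta x$ and use the bound $\sigma(y)\le C|y|^{\beta-\delta}$. The contribution is at most
\[
C\zeta^{2(\beta-\delta)-1}\int_{|x|\le2}|x+1|^{\beta-\delta-1}|x-1|^{\beta-\delta-1}\,dx ,
\]
which tends to $0$ because $2(\beta-\delta)-1>0$.
\end{itemize}
This gives \eqref{Omega_b>12}.

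The main obstacle is that the singularities at $\eta=\pm\zeta$ move with $\zeta$ and coalesce at $\eta=0$ exactly where $\sigma$ vanishes. As a result, neither pointwise convergence nor a single global dominating function is available. The splitting into a near region of size $O(\zeta)$, treated by scaling, and a far region, treated by dominated convergence or direct bounds, is designed to handle this. The key inputs are the two-sided power bounds extracted from \eqref{eq:thmasym_2}. Only the logarithmic asymptotics of $\sigma$ are used, which is why the conclusion for $\beta<\tfrac12$ is stated as a limit of logarithms rather than as a sharp asymptotic formula. The borderline $\beta=\tfrac12$ is excluded because there the near-region estimate produces only a logarithmically divergent bound.
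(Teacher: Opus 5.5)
Your argument is correct and follows the paper's route. You rescale $\eta=\zeta x$, sandwich $\sigma$ between $y^{\beta+\delta}$ and $y^{\beta-\delta}$ near the origin, and use $\sigma\le 1$ for the tail. In the case $\beta>\tfrac12$, your near/far splitting with an explicit $\zeta$-uniform dominating function supplies the justification for passing to the limit despite the moving singularities at $\eta=\pm\zeta$, which the paper only asserts.
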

\begin{remark}
Notice that the limit case $\beta =\frac 1 2 $ gives an additional logarithmic divergence that we will not take into account. 
\end{remark}
\smallskip

\noindent 
We first prove Lemma \ref{lem:asymOmegainfty}.

\begin{proofof}[Proof of Lemma \ref{lem:asymOmegainfty}] 
We now compute the asymptotics of $\Omega(\zeta)$ as $\zeta \to \infty$. Observe that if $\zeta\to \infty$ we have  $\sigma\big(\zeta(1+y)\big) \to 1 $  uniformly in $y\in[0,\infty)$. More precisely,
$
\sigma(s)=1+O\big(\frac 1 {s^{\omega}}\big),$ for $s\to\infty,\ \omega>0$. 
It then follows that we can rewrite \eqref{def:Omegazeta} as 
\begin{align}\label{eq:Omegazeta_split}
\Omega(\zeta)
&=\frac{2}{\zeta}\int_{0}^{\infty}\frac{\sigma\big(\zeta(y-1)\big)}{|y-1||1+y|}dy
+O\Big(\frac{1}{\zeta^{1+\omega}}\Big)\int_{0}^{\infty}\frac{\sigma\big(\zeta(y-1)\big)}{|1+y|^{1+\omega}|1-y|}dy =: \frac{2}{\zeta}I(\zeta)+O\Big(\frac{1}{\zeta^{1+\omega}}\Big) J(\zeta)
\end{align}
Thus, the leading asymptotic reduces to analyze 
\begin{align}\label{def:Izeta}
I(\zeta)=&\int_{0}^{\infty}\frac{\sigma\big(\zeta(y-1)\big)}{|y-1|(1+y)}dy= \int_{-1}^{\infty}\frac{\sigma\big(\zeta x\big)}{|x|(2+x)}dx=
\int_{-\zeta}^{\infty}\frac{\sigma\big(y\big)}{|y| \left(2+\frac y {|\zeta|}\right)}dy\, 
\end{align}
where in the first identity we performed the change of variables $x=y-1$ and in the second one we set $y=\zeta x $, with $dy=\zeta dx$. Moreover, we can estimate the integral \eqref{def:Izeta} splitting it as follows:  
\begin{align}\label{eq:Izeta}
I(\zeta)
&=\frac12\int_{-\zeta}^{\zeta}\frac{\sigma(y)}{|y|\left(2+\frac y {\zeta}\right)} dy
+\int_{\zeta}^{\infty}\frac{\sigma(y)}{y\left(2+\frac y {\zeta}\right)} dy
\notag \\& 
= \frac 1 2 \int_{-\zeta}^{\zeta}\frac{\sigma(y)}{|y|} dy
+ \int_{-\zeta}^{\zeta}\frac{\sigma(y)}{|y|} \left( \frac 1 {2+\frac y {\zeta}} - \frac 1 2  \right) dy +\int_{1}^{\infty}\frac{\sigma(\zeta \xi)}{\xi(2+\xi)}d\xi \notag \\& 
= 
\int_{0}^{\zeta}\frac{\sigma(y)}{y} dy
+ \frac 1 2 \int_{-\zeta}^{\zeta}\frac{\sigma(y)}{|y|}  \frac {y} {{\zeta} \left(2+\frac y {\zeta}\right)}   dy +\int_{1}^{\infty}\frac{1}{\xi(2+\xi)}d\xi+\int_{1}^{\infty}\frac{(\sigma(\zeta \xi)-1)}{\xi(2+\xi)}d\xi
\notag 
\\& 
=\int_{0}^{1}\frac{\sigma(y)}{y} dy + \int_{1}^{\zeta}\frac{1}{y} dy+ \int_{1}^{\zeta}\frac{(\sigma(y)-1)}{y} dy \notag \\&
\quad + \frac 1 {2{\zeta} } \int_{-\zeta}^{\zeta}\frac{\sigma(y) \, \text{sgn}(y)} {\left(2+\frac y {\zeta}\right)}   dy +\int_{1}^{\infty}\frac{1}{\xi(2+\xi)}d\xi+\int_{1}^{\infty}\frac{(\sigma(\zeta \xi)-1)}{\xi(2+\xi)}d\xi
\notag \\& 
= \int_{0}^{1}\frac{\sigma(y)}{y} dy + \int_{1}^{\zeta}\frac{1}{y} dy+ \int_{1}^{\zeta}\frac{(\sigma(y)-1)}{y} dy \notag \\&
\quad 
+\frac 1 2 \int_{0}^{\zeta} \left[\frac{\sigma(y)} {\left(2+\frac y {\zeta}\right)} - \frac{\sigma(y)} {\left(2-\frac y {\zeta}\right)}    \right] dy
+\int_{1}^{\infty}\frac{1}{\xi(2+\xi)}d\xi+\int_{1}^{\infty}\frac{(\sigma(\zeta \xi)-1)}{\xi(2+\xi)}d\xi \notag \\&
=:I_1+I_2+I_3+I_4+I_5+I_6\, ,
\end{align}
where in the second identity we changed variables in the third integral setting $y=\zeta \xi $. 
Considering the different contributions $I_i$ on the right hand side of \eqref{eq:Izeta} we have 
\begin{equation}
    \label{eq:I15}
 I_1+I_5= \int_{0}^{1}\frac{\sigma(y)}{y} dy +\int_{1}^{\infty}\frac{1}{\xi(2+\xi)}d\xi=:C_0<\infty   
\end{equation}
\begin{equation}
    \label{eq:I2}
I_2=\log(\zeta)
\end{equation}
Regarding $I_3$, we notice that \begin{equation}\label{eq:I3}
I_3=
\int_{1}^{\infty}\frac{(\sigma(y)-1)}{y} dy-\int_{\zeta}^{\infty}\frac{(\sigma(y)-1)}{y} dy = \tilde{C} + O\left(\frac 1 {\zeta^\omega}\right) \quad \text{as} \quad  \zeta \to \infty
\end{equation}
where we used that  $\vert \sigma(y)-1\vert \leq \frac {C}{y^{\omega}}$ for $y\geq 1$. We also  set $C_1:=C_0+\tilde{C}$ with $C_0$ as in \eqref{eq:I15} and $\tilde{C}$   as in \eqref{eq:I3}.   
Furthermore, using the former estimate for $\sigma(y)$ also to bound $I_6$, it follows that 
\begin{equation}\label{eq:I6}
    I_6=\int_{1}^{\infty}\frac{(\sigma(\zeta \xi)-1)}{\xi(2+\xi)}d\xi= O\left(\frac 1 {\zeta^\omega}\right) \int_{1}^{\infty}\frac{1}{\xi^{\omega+1}(2+\xi)}d\xi, \quad \text{as} \ \ \zeta \to \infty\,. 
\end{equation}
Moreover, 
\begin{align}\label{eq:I4}
I_4&=\frac 1 2 \int_{0}^{\zeta} \left[\frac{\sigma(y)} {\left(2+\frac y {\zeta}\right)} - \frac{\sigma(y)} {\left(2-\frac y {\zeta}\right)}    \right] dy= -\frac{2}{\zeta^2} \int_{0}^{\zeta} \sigma(y)  \frac{y} {\left(2+\frac y {\zeta}\right) \left(2-\frac y {\zeta}\right)}  dy\notag \\& 
= -2 \int_{0}^{1}  \frac{\xi\, \sigma(\zeta \xi) } {\left(2+\xi\right) \left(2-\xi\right)}  d\xi 
= -2 \int_{0}^{1}  \frac{\xi } {\left(2+\xi\right) \left(2-\xi\right)}  d\xi -2 \int_{0}^{1}  \frac{\xi\, \left(\sigma(\zeta \xi)-1\right) } {\left(2+\xi\right) \left(2-\xi\right)}  d\xi 
\end{align} 
We now denote $C_2:= C_1-2 \int_{0}^{1}  \frac{\xi } {\left(2+\xi\right) \left(2-\xi\right)}  d\xi <\infty$. Moreover, the remaining term yields
$$-2 \int_{0}^{1}  \frac{\xi\, \left(\sigma(\zeta \xi)-1\right) } {\left(2+\xi\right) \left(2-\xi\right)}  d\xi = O\left(\frac 1 {\zeta^\omega}\right) \quad \text{as} \quad \zeta\to \infty\, . $$
Combining \eqref{eq:I15}-\eqref{eq:I4} with \eqref{eq:Izeta}, we thus obtain
\begin{align}\label{eq:Izeta2}
I(\zeta)= \log(\zeta) + C_2+ O\left(\frac 1 {\zeta^\omega}\right)\, \quad \text{as} \quad \zeta\to \infty\,,
\end{align}
with $\omega>0$. 
Similar arguments can be used to analyze the contribution of $J(\zeta)$ on the right hand side of \eqref{eq:Omegazeta_split} and arrive at
\begin{equation}
    \label{eq:Jzeta2}
J(\zeta)=\int_{0}^{\infty}\frac{\sigma\big(\zeta(y-1)\big)}{|1+y|^{1+\omega}|1-y|}dy =O\left(\log\zeta\right)   \quad \text{as} \quad \zeta\to \infty\, .
\end{equation}
Combining \eqref{eq:Izeta2} and \eqref{eq:Jzeta2} with \eqref{eq:Omegazeta_split} we finally obtain
\begin{equation}
\label{eq:Omegazeta_split2bis}
    \Omega(\zeta)=
    \frac{2}{\zeta}I(\zeta)+O\Big(\frac{1}{\zeta^{1+\omega}}\Big) J(\zeta)= \frac{2}{\zeta} \left[\log (\zeta)+C_2\right] +
O\left(\frac{\log\zeta}{\zeta^{1+\omega}}\right)   \quad \text{as} \quad \zeta\to \infty\, 
\end{equation}
and the Lemma follows. 

\end{proofof}
\medskip

\noindent We now prove Lemma \ref{lem:asymOmega0}. 

\begin{proofof}[Proof of Lemma \ref{lem:asymOmega0}]
We now consider the   asymptotics of $\Omega(\zeta)$ as $\zeta \to 0^+$ where $\Omega(\zeta)$ is
given as in \eqref{def:Omegazeta}, i.e., 
\begin{align}\label{def:Omegazetaproof}
\Omega(\zeta)=\frac{2}{\zeta}\int_{0}^{\infty}dy \frac{\sigma\big(\zeta(1+y)\big) \sigma\big(\zeta(y-1)\big)}{(1+y)\,|1-y|}.
\end{align}
We first recall the asymptotic  behavior of $\sigma(s)$ for small values of $s$ given as in \eqref{eq:thmasym_2}, namely $$
\lim_{s\to 0^+} \frac{\log(\sigma(s))}{\log(s)}=\beta  \quad \text{with} \ \ \beta \geq a\,.$$
This implies that given $\beta_1, \beta_2$, arbitrarily close to $\beta$ and satisfying $\beta_1>\beta> \beta_2$ 
the following inequalities hold 
\begin{align}\label{eq:b_asymp}
s^{\beta_1}\leq \sigma(s)\leq    s^{\beta_2} \ \
\quad\text{for }0\leq s \leq \delta
\,, \end{align} 
where $\delta=\delta(\beta_1,\beta_2)>0$ can be chosen arbitrarily small. 

We distinguish the two cases $\beta<\frac 1 2 $ and $\beta>\frac{1}{2}$. We first consider the case $\beta < \frac 1 2$.   We first split the integral on the r.h.s. of \eqref{def:Omegazetaproof} as follows 
\begin{align}\label{eq:asymOmegabsmall12split}
    \Omega(\zeta)= \frac{2}{\zeta} \int_{0}^{\frac{\delta}{\zeta}-1}dy \frac{\sigma\big(\zeta(1+y)\big) \sigma\big(\zeta(y-1)\big)}{(1+y)\,|1-y|} + \frac{2}{\zeta} \int_{\frac{\delta}{\zeta}-1}^{\infty}dy \frac{\sigma\big(\zeta(1+y)\big) \sigma\big(\zeta(y-1)\big)}{(1+y)\,|1-y|}:=\Omega_1(\zeta)+\Omega_2(\zeta)
\end{align}
where  $\delta>0$ is fixed, arbitrary small and independent on $\zeta$. 

We now set 
\begin{equation}
    \label{def:barOmega1}
\overline{\Omega}_1(\zeta, \theta):=\frac{2 \zeta^{2\theta}}{\zeta}\int_{0}^{\frac{\delta}{\zeta}-1} 
    \frac{(1+y)^{\theta}|y-1|^{\theta}}{(1+y)|1-y|} dy ={2 \zeta^{2\theta-1}}\int_{0}^{\frac{\delta}{\zeta}-1} 
    \frac{1}{(1+y)^{1-\theta}|1-y|^{1-\theta}} dy \qquad \theta>0 \,.
    \end{equation}
We can then estimate the first term of the r.h.s. of \eqref{eq:asymOmegabsmall12split}, i.e. $ \Omega_1(\zeta)$ as follows
\begin{align}\label{est:barOmega1}
  \overline{\Omega}_1(\zeta, \beta_1) \leq \Omega_1(\zeta) \leq 
   \overline{\Omega}_1(\zeta, \beta_2) 
   \end{align}
On the other hand, using the boundedness of $\sigma$, i.e. $\sigma(s)<1$ we obtain 
\begin{equation}
    \label{est:barOmega2}
0\leq \Omega_2(\zeta)\leq K_\delta \quad \text{as} \ \ \zeta \to 0
\end{equation}
where $K_\delta=\frac 3 \delta$ since we can estimate from above by $\frac 2 {\zeta} \int_{\frac{\delta}{\zeta}-1}^{\infty} \frac{1}{(1+y)|1-y| }dy \leq  \frac 3 \delta$ the integral defining $\Omega_2$. 
Combining \eqref{eq:asymOmegabsmall12split} with \eqref{est:barOmega1} and \eqref{est:barOmega2} we then obtain
$$2 \beta_2-1 \leq \frac {\log(\Omega(\zeta))}{\log\zeta} \leq 2 \beta_1-1 $$
for $\zeta$ small enough. Here we used that the limit of the integral on the r.h.s. of \eqref{def:barOmega1} , i.e. $\int_{0}^{\frac{\delta}{\zeta}-1} 
    \frac{1}{(1+y)^{1-\theta}|1-y|^{1-\theta}} dy$, converges to the constant 
\begin{equation}\label{def:c_1}
    c_1:=\int_{0}^{\infty}\frac{1}{(1+y)^{1-\theta}|1-y|^{1-\theta}}dy< \infty\,
    \end{equation}
as $\zeta\to 0$ if $\theta<\frac 1 2$. 
Taking the limit as $\zeta \to 0$ and $\vert \beta_1-\beta_2 \vert \to 0$ we then obtain 
\eqref{Omega_b<12} which proves the first item of the Lemma.

\medskip

We now consider the second case,  when $ \beta>\tfrac 1 2$. Performing the  change of variables  $\xi=\zeta y$ in \eqref{def:Omegazetaproof} we obtain 
    \begin{align}
\Omega(\zeta)=2\int_{0}^{\infty} \frac{\sigma(\zeta+\xi)\sigma(\xi-\zeta)}{|\xi+\zeta||\xi-\zeta|} d\xi\,.
    \end{align}
We now  use the fact that  
 \begin{equation}
\label{est:sigma2beta12} 
  \sigma (\xi)\leq \xi^{\beta-\varepsilon}, \quad \text{for} \ \ 0< \xi \leq \delta(\varepsilon)
 \end{equation}
 for a fixed positive arbitrary small $\varepsilon>0$. This estimate, combined with the boundedness of $\sigma(\xi)$ ($\sigma(\xi)\leq 1$) for large values gives 
    \begin{align}
    \Omega(\zeta)\to c_2
    \end{align}
    where $c_2< \infty$ is given as in \eqref{def:c_2}. 
    Notice that the integral defining $c_2$ converges due to the power-law estimate \eqref{est:sigma2beta12} provided $\beta>1/2$.   
    Thus $\Omega(\zeta)$ approaches a finite limit when $\beta>1/2$ (cf.~\eqref{Omega_b>12}). We have then proved the two items of the Lemma.

\end{proofof}

\bigskip 

We now consider the auxiliary function  $ \widetilde{W}(s)$ introduced in the previous section, 
defined as in \eqref{def:tildeW_F}, namely 
\begin{equation}
    \label{def:tildeW_F2}
    \widetilde{W}(s):= \frac{4\sqrt{2}}{2^{a}}W\left(\, s ^{{-\frac 1 a }}\right)   s ^{-\left(\frac 1 a+1\right)}
\end{equation}
where $W(r)$ (cf. \eqref{def:Wr}) reads 
\begin{align}\label{def:Wr2}
W(r):=\int_{r}^{\infty}\frac{d\zeta}{\zeta^{a+1}}\frac{\sqrt{1 + \sqrt{1 - \left(\frac{r}{\zeta} \right)^{2}}} + \sqrt{1 - \sqrt{1 - \left(\frac{r}{\zeta}\right)^{2}}}}{\sqrt{1 - \left(\frac{r}{\zeta}\right)^{2}}}\,\Omega(\zeta)\,. 
\end{align}
We have the following results. 
\begin{lem}\label{lem:Wr}
    Let ${W}(r)$ be defined as in \eqref{def:Wr}. We have the following asymptotics: 
    \begin{itemize}
        \item[(i)] $W(r)=O\left(\frac{\log(r)}{r^{a+1}}\right)$ as $r\to \infty$;
        \item[(ii)] $\displaystyle W(r)=\frac{C_0}{r^{a}} $ as $r\to 0^+$ if $\beta>\frac 1 2$;
        \item[(iii)] $\displaystyle W(r)=\frac{C_1}{r^{1+a-2\beta}} $ as $r\to 0^+$ if $\beta< \frac 1 2$ ,
    \end{itemize}
    where $C_0, C_1$ are two positive constants. 
\end{lem}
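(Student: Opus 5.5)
The plan is to substitute $\zeta = r x$ in \eqref{def:Wr}, which gives
\[
W(r)=\frac{1}{r^{a}}\int_{1}^{\infty}\frac{dx}{x^{a+1}}\,K(x)\,\Omega(rx),\qquad K(x):=\frac{\sqrt{1+\sqrt{1-x^{-2}}}+\sqrt{1-\sqrt{1-x^{-2}}}}{\sqrt{1-x^{-2}}} .
\]
The kernel $K$ is continuous on $(1,\infty)$ and satisfies $K(x)\to\sqrt2$ as $x\to\infty$. Near $x=1$ it has an integrable singularity of size $K(x)\sim \sqrt2\,(2(x-1))^{-1/2}$. So all three items reduce to understanding $\Omega(rx)$ uniformly in $x\ge 1$, using Lemma \ref{lem:asymOmegainfty} for large arguments and Lemma \ref{lem:asymOmega0} for small ones, together with dominated convergence.

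For (i), let $r\to\infty$. Then $rx\ge r$ is large for every $x\ge1$, and Lemma \ref{lem:asymOmegainfty} gives $\Omega(rx)\le C\,\frac{\log(rx)}{rx}\le \frac{C}{r}\,(\log r+\log x)$. Substituting this bound, we get
\[
W(r)\le \frac{C}{r^{a+1}}\int_1^\infty \frac{K(x)\,(\log r+\log x)}{x^{a+2}}\,dx .
\]
Both integrals converge, since $K$ is integrable near $1$ and decays like $x^{-a-2}$ up to logs at infinity. This gives $W(r)=O(\log r/r^{a+1})$. Retaining the leading term $2\log(rx)/(rx)$ would even give the sharp constant, but the statement only asks for a bound.

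For (ii) and (iii), let $r\to0^+$. I will split the $x$-integral at $x=\delta/r$ for a fixed small $\delta$. On the tail $x>\delta/r$, $\Omega$ is bounded on $[\delta,\infty)$, being continuous and decaying by Lemma \ref{lem:asymOmegainfty}. The tail therefore contributes at most $C\int_{\delta/r}^\infty x^{-a-1}\,dx=O(r^{a})$ inside the bracket, which is negligible.
\begin{itemize}
\item If $\beta>\frac12$, then $\Omega(rx)\to c_2$ pointwise by \eqref{Omega_b>12}, and $\Omega$ is bounded on $(0,\delta]$. Dominated convergence, using the integrable majorant $K(x)x^{-a-1}$, gives $r^aW(r)\to c_2\int_1^\infty K(x)x^{-a-1}\,dx=:C_0$.
\item If $\beta<\frac12$, then Lemma \ref{lem:asymOmega0} gives $\Omega(\zeta)\approx \zeta^{2\beta-1}$. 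Pulling out $r^{2\beta-1}$ leaves the integral $\int_1^{\delta/r}K(x)\,x^{2\beta-a-2}\,dx$, which converges as $r\to0$ because $2\beta-a-2<-1$. This yields $W(r)\approx C_1 r^{-(1+a-2\beta)}$.
\end{itemize}

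The main obstacle is in case (iii). Lemma \ref{lem:asymOmega0} only controls $\log\Omega/\log\zeta$, not $\Omega(\zeta)/\zeta^{2\beta-1}$, so the argument as stated only yields the exponent. I would first attempt to get the sharp power from the proof of Lemma \ref{lem:asymOmega0} under the stronger hypothesis $\sigma(s)\sim\kappa s^{\beta}$, which is the expected power law \eqref{eq:exp-asymsigma}. Then $\overline{\Omega}_1(\zeta,\beta)\sim 2\kappa^2 c_1\zeta^{2\beta-1}$ with $c_1$ as in \eqref{def:c_1}, and dominated convergence gives the explicit constant $C_1=2\kappa^2c_1\int_1^\infty K(x)\,x^{2\beta-a-2}\,dx$. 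Failing that, I would interpret (iii), like \eqref{Omega_b<12}, in the logarithmic sense $\log W(r)/\log r\to-(1+a-2\beta)$, using the two-sided bounds with $\beta_1,\beta_2$ exactly as in \eqref{est:barOmega1}.
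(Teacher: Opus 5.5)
Your proposal follows the paper's proof. You substitute $\zeta=rx$ to get $W(r)=r^{-a}\int_1^\infty x^{-a-1}K(x)\,\Omega(rx)\,dx$, then insert Lemma \ref{lem:asymOmegainfty} for (i) and Lemma \ref{lem:asymOmega0} for (ii)--(iii); your splitting at $x=\delta/r$ and your dominated-convergence majorants make explicit steps the paper leaves implicit. Your caveat on (iii) is accurate: the paper's own argument only uses $\Omega(rx)\sim c_3(rx)^{2\beta-1-\bar\delta}$ and concludes $W(r)\sim\tilde c_3\, r^{-(1+a-2\beta+\bar\delta)}$ with $\bar\delta>0$ arbitrary, i.e.\ the exponent in the logarithmic sense, which is all Lemma \ref{lem:tildeWr} needs later.
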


\begin{lem}\label{lem:tildeWr}
    Let $\widetilde{W}(s)>0$ be defined as in \eqref{def:tildeW_F2}. Then  $\displaystyle W_0=\int_{0}^\infty \, \widetilde{W}(s)ds<\infty$.  
\end{lem}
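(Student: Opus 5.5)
The plan is to reduce the integrability of $\widetilde{W}$ on $(0,\infty)$ to the asymptotics of $W$ given in Lemma~\ref{lem:Wr}, via the substitution $r=s^{-1/a}$. Since $s\to 0^+$ corresponds to $r\to\infty$ and $s\to\infty$ to $r\to 0^+$, the two ends of the $s$-integral are controlled by items (i) and (ii)–(iii) of Lemma~\ref{lem:Wr} respectively. Concretely, with $s=r^{-a}$ we have $ds=-a\,r^{-a-1}dr$ and $s^{-(\frac1a+1)}=r^{1+a}$, hence
\begin{equation*}
W_0=\frac{4\sqrt2}{2^a}\int_0^\infty W(s^{-1/a})\,s^{-(\frac1a+1)}\,ds=\frac{4\sqrt2\,a}{2^a}\int_0^\infty W(r)\,dr .
\end{equation*}
So the statement is equivalent to $W\in L^1(0,\infty)$, and positivity of $\widetilde W$ follows from positivity of $\Omega$ and of the kernel in \eqref{def:Wr2}.

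First I would check that $W$ is finite and continuous on compact subsets of $(0,\infty)$. The kernel has an integrable singularity $(1-(r/\zeta)^2)^{-1/2}\sim (2(\zeta-r)/r)^{-1/2}$ at $\zeta=r$. Moreover, $\Omega$ is locally bounded on $(0,\infty)$, which follows from its definition \eqref{def:Omegazeta} together with $0\le\sigma\le1$ and the behavior of $\sigma$ near $0$. Finally, $\Omega(\zeta)\zeta^{-a-1}$ is integrable at $\infty$ by Lemma~\ref{lem:asymOmegainfty}.

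Next I would treat the two ends. At infinity, Lemma~\ref{lem:Wr}(i) gives $W(r)=O(\log r/r^{1+a})$, which is integrable since $a>0$. Near the origin there are two cases. If $\beta>\tfrac12$, Lemma~\ref{lem:Wr}(ii) gives $W(r)\sim C_0r^{-a}$, integrable since $a<1$. If $\beta<\tfrac12$, Lemma~\ref{lem:Wr}(iii) gives $W(r)\sim C_1 r^{-(1+a-2\beta)}$, which is integrable at $0$ if and only if $1+a-2\beta<1$, that is $\beta>a/2$. This holds because Theorem~\ref{thm:sigma} guarantees $\beta\ge a>a/2$. Adding the three pieces gives $W_0<\infty$.

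The main obstacle is the small-$r$ regime when $\beta<\tfrac12$, since there integrability relies on the lower bound $\beta\ge a$ from the cutoff analysis and not just on $a<1$. If one wants to avoid invoking the sharp constant in Lemma~\ref{lem:Wr}(iii), I would instead bound $\Omega(\zeta)\le C\zeta^{2\beta_2-1}$ for small $\zeta$, with $\beta_2\in(a/2,\beta)$ as in the proof of Lemma~\ref{lem:asymOmega0}. Inserting this into \eqref{def:Wr2} and rescaling $\zeta=r\rho$ gives the upper bound $W(r)\le Cr^{2\beta_2-1-a}$, which suffices. The borderline case $\beta=\tfrac12$ is excluded, as in the Remark following Lemma~\ref{lem:asymOmega0}; it would only add a logarithm, and integrability would be unaffected.
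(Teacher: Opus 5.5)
Your proposal is correct and follows essentially the same route as the paper. Both arguments rest on the endpoint asymptotics of $W$ in Lemma~\ref{lem:Wr}: item (i) controls $s\to0^+$, and items (ii)--(iii) together with $\beta\ge a$ control $s\to\infty$, where your condition $1+a-2\beta<1$ is exactly the paper's requirement that $\widetilde W(s)=O(s^{-2\beta/a})$ be integrable at infinity. The only difference is that you first substitute $r=s^{-1/a}$ to reduce the claim to $W\in L^1(0,\infty)$, whereas the paper works directly in $s$ and uses that substitution only later, to evaluate $W_0=\frac{4\sqrt2\,a}{2^a}\int_0^\infty W(z)\,dz$.
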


\begin{proofof}[Proof of Lemma \ref{lem:Wr}]
We start proving item $(i)$. From \eqref{def:Wr}, performing the change of variables $\zeta=rx$ we obtain 
\begin{align}\label{def:Wr3}
W(r)=\frac 1 {r^a}\int_{1}^{\infty} \, dx \frac{\Omega(rx) }{x^{a+1}}\frac{\sqrt{1 + \sqrt{1 - \left(\frac{1}{x} \right)^{2}}} + \sqrt{1 - \sqrt{1 - \left(\frac{1}{x}\right)^{2}}}}{\sqrt{1 - \left(\frac{1}{x}\right)^{2}}}\,\,. 
\end{align}
From Lemma \ref{lem:asymOmegainfty} (see \eqref{eq:Omegazeta_split2}) we have that $\Omega(rx)=O\left(
      \frac{\log (rx)}{rx} \right)$ 
      as $r\to \infty\, $ 
for $x\geq 1$ which then directly implies item $(i)$.  

We now consider item $(ii)$. Thanks to Lemma \ref{lem:asymOmega0} (see \eqref{Omega_b>12}) we have that  $\Omega(rx)\sim c_2 $ as $r \to 0$ when $\beta>\tfrac 1 2$. Using this asymptotics in \eqref{def:Wr3} we then obtain, as $r\to 0^+$, 
\begin{align}\label{def:Wr4}
W(r)\sim \frac 1 {r^a}\int_{1}^{\infty} \, dx \frac{c_2 }{x^{a+1}}\frac{\sqrt{1 + \sqrt{1 - \left(\frac{1}{x} \right)^{2}}} + \sqrt{1 - \sqrt{1 - \left(\frac{1}{x}\right)^{2}}}}{\sqrt{1 - \left(\frac{1}{x}\right)^{2}}}\, =: \frac{\tilde{c}_2}{r^a}\,
\end{align}
where $\tilde{c}_2$ is a positive constant. This implies item $(ii)$. 

We finally consider item $(iii)$. From Lemma \ref{lem:asymOmega0} (see \eqref{Omega_b<12}) we have that  $\Omega(rx)\sim  c_3\,(rx)^{2\beta-1-\bar{\delta}}\,$ as ${r \to 0}$ with $\bar{\delta}>0$ arbitrary small. Using this asymptotics in \eqref{def:Wr3} we obtain, as $r\to 0^+$, 
\begin{align}\label{def:Wr5}
W(r)\sim \frac 1 {r^{a -2\beta+1+\bar{\delta}}} \int_{1}^{\infty} \, dx \frac{c_3}{x^{a+1-2\beta+1+\bar{\delta}}}\frac{\sqrt{1 + \sqrt{1 - \left(\frac{1}{x} \right)^{2}}} + \sqrt{1 - \sqrt{1 - \left(\frac{1}{x}\right)^{2}}}}{\sqrt{1 - \left(\frac{1}{x}\right)^{2}}}\, = \frac{\tilde{c}_3}{r^{1+a-2\beta+\bar{\delta}}} \,
\end{align}
where $\tilde{c}_3$ is a positive constant. This  implies item $(iii)$ and concludes the proof.  
\end{proofof}

\bigskip

\begin{proofof}[Proof of Lemma \ref{lem:tildeWr}]
We first study the integrability at the origin of 
$$\widetilde{W}(s)=\frac{4\sqrt{2}}{2^{a}}W\left(\, s ^{{-\frac 1 a }}\right)   s ^{-\left(\frac 1 a+1\right)}\,.$$
 We observe that, as $s\to 0^+$, we have that $s ^{{1-\frac 1 a }}\to \infty$. Using the asymptotics of $W(r)$  as $r\to \infty$, given by item $(i)$ of Lemma \ref{lem:Wr}, namely $W(r)=O\left(\frac{\log(r)}{r^{a+1}}\right)$, we thus obtain
$$\widetilde{W}(s)= O\left({\log\left(\frac 1 s \right)}{ \left( s^{-\frac {1} a} \right)^{a+1}}   \frac 1 {s^{\frac 1 a+1}} \right) = O\left({\log\left(\frac 1 s \right)} s^{\frac {1} {a}  +1 } s ^{-\left(\frac 1 a+1\right)} \right)=O\left({\log\left(\frac 1 s \right)} \right) \quad \text{as} \quad s\to 0^+$$
which implies the integrability of $\widetilde{W}(s)$ at $\infty$. 

We now analyze the integrability as $s\to \infty$ of 
$\widetilde{W}(s)$.  We first suppose that $\beta>\frac 1 2$. Then, thanks to item $(ii)$ of Lemma \ref{lem:Wr}, which states that $\displaystyle W(r)=\frac{C_0}{r^{a}} $ as $r\to 0^+$, we obtain
$$\widetilde{W}(s)=\frac{4\sqrt{2}}{2^{a}}W\left( s ^{{-\frac 1 a }}\right)   s ^{-\left(\frac 1 a+1\right)}=O\left( \frac s {s ^{ \frac 1 a+1 }}\right)= O\left(\frac 1 {s ^{\frac 1 a}}\right)
\quad \text{as} \quad s\to \infty\, .$$    
This gives the integrability as $s\to \infty$ of 
$\widetilde{W}(s)$ when $\beta>\frac 1 2$. 

On the other hand, when $\beta<\frac 1 2$, thanks to item $(iii)$ of Lemma \ref{lem:Wr} we have that 
$ W(r)=\frac{C_1}{r^{1+a-2\beta}}$ as $r\to 0^+$ and then, as $s\to \infty$ we obtain
$$
\widetilde{W}(s)=\frac{4\sqrt{2}}{2^{a}} W\left( s ^{{-\frac 1 a }}\right)   s ^{-\left(\frac 1 a +1\right)} =O\left(\left(s^{{-\frac 1 a}}\right)^{-(1+a-2\beta)} s ^{-\left(\frac 1 a +1\right)}\right)
=O\left(  s^{ -\frac{2\beta}{a} }\right)\quad \text{as} \quad s\to \infty \,. 
$$
We notice that if $\beta \geq a$ the r.h.s. of the equation above is than controlled by a quantity of order $s^{-2}$, i.e. $O\left(  s^{ -2}\right) $ which then gives integrability of $\widetilde{W}(s)$ as  $s\to \infty $ in this case. This concludes the proof. 
\end{proofof}
\medskip

\subsection{Derivation of the asymptotics of $F$ as $\tau \to \infty$ and test of the consistency of the asymptotics}

With the argument presented in this section we  will show that the asymptotics for the distribution function $F$ and the reduced one $G$ are consistent. 
We start from $G(0^+,\xi_2,\tau)$ given as in \eqref{def:Gbd_selfsim} and combined with \eqref{eq:ansatzH}, namely \begin{equation}
\label{def:Gbd_selfsim_sec7}
G(0^+,\xi_2,\tau)= \frac{e^{-2\tau}}{(\varepsilon(\tau))^2 } \lambda(\tau)\overline{H}\left(\frac {\xi_2}{\varepsilon(\tau)e^{\tau} }\right), \ \ \tau=\log(t) \,.  \end{equation}
On the other hand, we recall the formula for $F(0^+,\tilde{\xi},\tau)$ (cf.\eqref{eq:Fbdexp7}), namely 
\begin{align}\label{eq:Fbdexp7_sec7}
F(0^{+}, \xi_{2}, \xi_{3}, \tau) = \frac{ 4 \sqrt{2}}{2^{a}}\frac {e^{-\tau}} {\left(\xi_{2} |\tilde{\xi}|\right)} \frac{a}{(1-a)} \frac{\lambda(\tau)}{\varepsilon(\tau)} W\left(\frac{|\widetilde{\xi}|}{e^{\tau}\,\varepsilon(\tau)}\right) \, .
\end{align} 
where the function $W$ is as in \eqref{def:Wr}. We now integrate with respect to the variable $\xi_3$ and then obtain
\begin{align}\label{eq:Fbint1_sec7}
\int_{\mathbb{R}}d\xi_3\, F(0^{+}, \xi_{2}, \xi_{3}, \tau) &= \frac{4\sqrt{2}}{2^{a}} \int_{\mathbb{R}}d\xi_3\, \frac {e^{-\tau}} {\left(\xi_{2} |\tilde{\xi}|\right)} \frac{a}{(1-a)} \frac{\lambda(\tau)}{\varepsilon(\tau)} W\left(\frac{|\widetilde{\xi}|}{e^{\tau}\,\varepsilon(\tau)}\right)\nonumber \\&
=\frac{8\sqrt{2}}{2^{a}} \frac{a}{(1-a)}   \frac {\lambda(\tau)}{\xi_2}\int_{\frac{\xi_2}{e^{\tau}\,\varepsilon(\tau)}}^\infty ds\, \frac {W\left(s\right)}{\xi_3} \nonumber
\\& 
=\frac{8\sqrt{2}}{2^{a}} \frac{a}{(1-a)}   \frac {\lambda(\tau)}{\xi_2}\int_{\frac{\xi_2}{e^{\tau}\,\varepsilon(\tau)}}^\infty ds\, \frac {W\left(s\right)}{ 
e^{\tau}\,\varepsilon(\tau) \sqrt{s^2 -\left(\frac{\xi_2}{e^\tau \varepsilon(\tau)}\right)^2}}
\nonumber
\\& = \frac{8\sqrt{2}}{2^{a}} \frac{ e^{-2\tau}}{(\varepsilon(\tau))^2}
\frac{a}{(1-a)}  \frac {\lambda(\tau)}{\zeta } \int_{\zeta}^\infty 
ds\, \frac {W\left(s\right)}{  \sqrt{s^2 - \zeta^2}}
\nonumber
\\& 
= \frac{e^{-2\tau}}{(\varepsilon(\tau))^2} \lambda(\tau)\overline{H}(\zeta)\, \qquad \text{with}\quad \zeta=\frac{\xi_2}{e^\tau (\varepsilon(\tau))}
\end{align} 
where we used the change of variables 
$s=\frac{|\widetilde{\xi}|}{e^{\tau}\,\varepsilon(\tau)}=\frac{\sqrt{{\xi_2}^2+{\xi_3}^2}}{e^{\tau}\,\varepsilon(\tau)}$
which implies 
$\left(e^{\tau}\,\varepsilon(\tau)\right)^2 s^2 -{\xi_2}^2= {\xi_3}^2$ and thus $\xi_3= e^{\tau}\,\varepsilon(\tau) \sqrt{s^2 -\left(\frac{\xi_2}{e^\tau \varepsilon(\tau)}\right)^2}$, with jacobian   $ds= \frac{\xi_3}{|\widetilde{\xi}|e^\tau \varepsilon(\tau)} d\xi_3$. 

Therefore, 
$$
\overline{H}(\zeta) = \frac{8\sqrt{2}}{2^{a}} 
\frac{a}{(1-a)}  \frac 1{\zeta } \int_{\zeta}^\infty 
ds\, \frac {W\left(s\right)}{  \sqrt{s^2 - \zeta^2}} $$

Then, using that $\int_{0}^\infty \zeta \overline{H}(\zeta) d \zeta= 1$ we obtain
\begin{align}
   1&= \int_{0}^\infty d \zeta\, \zeta \,\overline{H}(\zeta) =    \frac{16\sqrt{2}}{2^{a}} 
\frac{a}{(1-a)}  \int_{0}^\infty d \zeta\,  \int_{\zeta}^\infty 
ds\, \frac {W\left(s\right)}{  \sqrt{s^2 - \zeta^2}} 
\nonumber \\&
=\frac{8\sqrt{2}}{2^{a}} 
\frac{a}{(1-a)}  \int_{0}^\infty 
ds\,W\left(s\right) \int_{0}^{s} \frac {d\zeta}{  \sqrt{s^2 - \zeta^2}} = \frac{16\sqrt{2}}{2^{a}} 
\frac{a}{(1-a)}  \left(\int_{0}^\infty 
ds\,W\left(s\right) \right) \left(\int_{0}^{1}  \frac {dt}{  \sqrt{1 - t^2}}\right)
\nonumber\\&
= \frac{8\sqrt{2}}{2^{a}} 
\frac{a}{(1-a)}  \frac \pi 2\left(\int_{0}^\infty 
ds\,W\left(s\right) \right)
\end{align}
where we first used Fubini and then the change of variables $st=\zeta$ and the fact that $\left(\int_{0}^{1}  \frac {dt}{  \sqrt{1 - t^2}}\right)=\frac \pi 2 $.  Therefore, we have 
\begin{equation}
    \label{eq:integralWs}\int_{0}^\infty 
ds\,W\left(s\right)= \frac{2^{a}} {4\sqrt{2}\pi}
\frac{(1-a)} {a} \,. 
\end{equation}
We now want to compute the value $W_0=\int_{0}^\infty ds\, \tilde{W}(s)$ (cf. Lemma \ref{lem:tildeWr}) where $\tilde{W}(s)$  is as in \eqref{def:tildeW_F}, namely  \begin{equation*}
    \widetilde{W}(s):= \frac{4\sqrt{2}}{2^{a}}W\left(\, s ^{{-\frac 1 a }}\right)  s ^{-\left(\frac 1 a+1\right)}.
\end{equation*} To this aim, we consider 
\begin{align}
 W_0&=\int_{0}^\infty ds\, \tilde{W}(s)   = 
 \frac{4\sqrt{2}}{2^{a}} \int_{0}^\infty ds\, W\left(\, s ^{{-\frac 1 a }}\right) s ^{-\left(\frac 1 a+1\right)}= \frac{4\sqrt{2}}{2^{a}} a
 \int_{0}^\infty dz\, W\left(z\right) 
\nonumber\\&
=\frac{4\sqrt{2}}{2^{a}} \frac{2^a}{4\sqrt{2}\pi} \frac{ a(1-a)} {a} =\frac 1 \pi(1-a)\,,
\end{align} 
where we changed variables setting $ z=s ^{-\frac 1 a }$ with $dz=-\frac 1 a \, s^{-\left(1+\frac 1 a \right)} ds $ and we used \eqref{eq:integralWs} in the last identity. 
We thus obtained that 
$W_0=\frac 1 \pi (1-a)$. 

Then, \eqref{eq:solFchar_s7_5} becomes 
\begin{align}
\label{eq:solFchar_s7_5final}
F(\xi_1,\xi_2,\xi_3,\tau)  &\sim 
\frac a {1-a} \,\lambda(\tau_0) 
\frac {W_0\,} { \vert \tilde{\xi}\vert ^2}  \delta\left(\xi_2-\xi_1  \right)\sigma_{\ast}\left( \xi_1,\xi_2 ,\tau\right)
\sim 
\frac a {1-a} \, 
\frac {1-a} { \pi\,\vert \tilde{\xi}\vert ^2}  \lambda(\tau_0)\,\delta\left(\xi_2-\xi_1  \right)\sigma_{\ast}\left( \xi_1,\xi_2 ,\tau\right)
\nonumber \\& 
\sim \lambda(\tau_0) 
\frac {a} { \pi\,\vert \tilde{\xi}\vert ^2}  \delta\left(\xi_2-\xi_1  \right)
\sigma_{\ast}\left( \frac{\xi_1}{\varepsilon(\tau)} ,\tau\right)
\end{align}
where in the second approximate identity we used the fact that 
$$\sigma_{\ast}\left( \xi_1,\xi_2 ,\tau\right)=\sigma_{\ast}\left( \frac{\xi_1}{\varepsilon(\tau)} ,\tau\right):=\chi_{\{\xi_1 \le e^\tau \varepsilon(\tau) \}} {\sigma} \left( \frac{\xi_1}{\varepsilon(\tau)} \right) $$
where $\sigma$ is the one appearing in the asymptotics of $G$, specifically \eqref{eq:sigma2}, \eqref{eq:chvarSigma}. 

On the other hand,  the asymptotics for the reduced distribution $G$ are given by \eqref{eq:finalGappr_sec5}, namely 
\begin{equation*}
G(\xi, \tau) = 
\frac 1 {2\,\log \left(\frac{1}{1-a}\right)}\, \frac{1 }{ ((1-a)\tau + a \log(\xi_1)) \, \xi_1} \chi_{\{\xi_1 \le e^\tau \varepsilon(\tau) \}} {\sigma} \left( \frac{\xi_1}{\varepsilon(\tau)} \right) \delta(\xi_2 - \xi_1) 
 \quad \text{as} \ \ \tau \to \infty \end{equation*}
or, equivalently,
\begin{equation*}
G(\xi, \tau) = a \lambda(\tau_0)
\chi_{\{\xi_1 \le e^\tau \varepsilon(\tau) \}} 
\frac 1 {\xi_1}{\sigma} \left( \frac{\xi_1}{\varepsilon(\tau)} \right) \delta(\xi_2 - \xi_1) 
 \quad \text{as} \ \ \tau \to \infty \end{equation*}
We can then finally check that $\int_{\mathbb{R}} d\xi_3\, F(\xi_1,\xi_2,\xi_3,\tau)$ has the same asymptotic behavior as $G(\xi_1,\xi_2,\tau)$, as $\tau\to \infty$. 
Indeed, using \eqref{eq:solFchar_s7_5final}, we have 
\begin{align}\label{eq:consistency_asymFG}
 \int_{\mathbb{R}} d\xi_3\, F(\xi_1,\xi_2,\xi_3,\tau)&\sim  
\frac {a \,\lambda(\tau_0) } { \pi} 
\int_{\mathbb{R}} \, \frac {d\xi_3}{( \xi_2^2+\xi_3^2)}  \delta\left(\xi_2-\xi_1  \right)
\sigma_{\ast}\left( \frac{\xi_1}{\varepsilon(\tau)} ,\tau\right)\nonumber\\&
\sim 
\frac{ a\lambda(\tau_0) }{ \pi} 
\frac 1 {\vert \xi_2 \vert }\int_{\mathbb{R}} \, \frac {dt}{( 1+t^2)}  \delta\left(\xi_2-\xi_1  \right)
\sigma_{\ast}\left( \frac{\xi_1}{\varepsilon(\tau)} ,\tau\right)\nonumber \\&= 
\frac{a\lambda(\tau_0) }  { \pi} 
\frac {\pi} {\vert \xi_2 \vert }  \delta\left(\xi_2-\xi_1  \right)
\sigma_{\ast}\left( \frac{\xi_1}{\varepsilon(\tau)} ,\tau\right) 
\end{align}
where we changed variables and set $t\xi_2=\xi_3$ and the fact that $\int_{\mathbb{R}} \, \frac {dt}{( 1+t^2)} =\pi$. 
Notice that the r.h.s. of \eqref{eq:consistency_asymFG} 
provides the same asymptotic behavior as the function $G(\xi_1,\xi_2,\tau)$. Therefore, the formulas obtained are consistent. Actually, it is possible to show that, by means of a direct computation, $\int_{\mathbb{R}} d\xi_3\, F(\xi_1,\xi_2,\xi_3,\tau)$ with $F(\xi_1,\xi_2,\xi_3,\tau)$ given as in \eqref{eq:solFchar_s7_3} coincides with the function $G(\xi_1,\xi_2,\tau)$ given as in \eqref{eq:GapprU}.


\bigskip

\section{On the removal mechanism of particles in the region where $|\xi|\approx \varepsilon(\tau)$ } 
\label{ss:cutoff}

In this section we study the stationary problem \eqref{eq:pde_b_stat}, \eqref{eq:statLambda} for the cutoff function $\sigma$, that has been introduced in Section \ref{sec:asymG}. This problem  describes how the particles are removed from the collision region, specifically how they are transported to the region where $|\xi|\gtrsim\varepsilon(\tau) e^\tau$.

\subsection{Main results and reformulation of the stationary problem \eqref{eq:pde_b_stat}, \eqref{eq:statLambda}} \label{ss:refomulationSteadyPb}

The goal of this section is to study the damping problem \eqref{eq:pde_b_stat}, \eqref{eq:statLambda} defined by the following equation:
\begin{equation}\label{def:dampingPb}
\left(\frac{1}{a}-1\right)y\frac{\partial \sigma}{\partial y}={\Lambda}[\sigma] \sigma, \quad y>0
\end{equation}
with boundary condition 
\begin{equation}\label{eq:bdcdtsigma}
\sigma(y)\to 1 \quad \text{as} \quad y\to \infty
\end{equation}
and where the operator ${\Lambda}[\sigma]$ is given by:
\begin{equation}\label{eq:statLambda_bis}
{\Lambda}[\sigma](y)=8\pi\int_{\mathbb{R}} d\zeta \, |y-\zeta|^{-a}\frac{\sigma(\zeta)}{|\zeta|}
\end{equation}
with $\sigma(\zeta)=\sigma(-\zeta)$. 
Notice that, in this Section, to keep the notation lighter we are renaming the variables $y_2 \to y$,  $\zeta_2 \to \zeta$; on the other hand we are emphasizing the dependence of $\Lambda$ on $\sigma$, writing $\Lambda=\Lambda[\sigma]$.
We will now prove the following result. 
\begin{thm}\label{thm:sigma}
    Let be $X:=\{\sigma\in C_b(\mathbb{R})\, : \,\sigma(y)=\sigma(-y), \, 0 \le \sigma(y) \le 1\}\}$. There exists at least one solution $\sigma\in X$ of the problem \eqref{def:dampingPb}-\eqref{eq:statLambda_bis}. 
Moreover, every non-negative solution to the problem \eqref{def:dampingPb}-\eqref{eq:statLambda_bis} satisfies $0<\sigma(y)\leq 1$ for $y\in \mathbb{R}$ and the asymptotics 
\begin{equation}\label{eq:thmasym}
\lim_{y\to 0^+} \frac{\log(\sigma(y))}{\log(y)}=\beta  \quad \text{with} \ \ \beta \geq a\,.
\end{equation}
\end{thm}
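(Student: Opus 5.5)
The plan is to recast \eqref{def:dampingPb}--\eqref{eq:statLambda_bis} as a fixed point problem. Set $b:=\frac1a-1>0$. Dividing by $b\,y\,\sigma$ and integrating from $y$ to $\infty$, using the boundary condition \eqref{eq:bdcdtsigma}, gives
\begin{equation*}
\sigma(y)=\mathcal{T}[\sigma](y):=\exp\Big(-\frac1b\int_{|y|}^{\infty}\frac{\Lambda[\sigma](z)}{z}\,dz\Big).
\end{equation*}
Since $\Lambda[\sigma]\ge0$, any $\mathcal{T}[\sigma]$ is even, takes values in $[0,1]$, and is nondecreasing in $|y|$. Moreover $\mathcal{T}$ is order reversing: $\sigma_1\le\sigma_2$ implies $\mathcal{T}[\sigma_1]\ge\mathcal{T}[\sigma_2]$.

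The difficulty is that $\Lambda[\sigma]$ is finite only when $\sigma$ vanishes fast enough at the origin. For instance $\Lambda[1]\equiv\infty$, because of the factor $1/|\zeta|$. I would therefore first regularize, replacing $1/|\zeta|$ by $1/(|\zeta|+\delta)$ in \eqref{eq:statLambda_bis}. For $\sigma\in X$ the regularized $\Lambda_\delta[\sigma](z)$ is then bounded by $C_\delta(1+|z|)^{-a}\log(2+|z|)$. This makes $\int_{y}^{\infty}\Lambda_\delta(z)z^{-1}\,dz$ finite for every $y>0$, with tails that are uniformly small as $y\to\infty$. The family $\mathcal{T}_\delta[X]$ consists of monotone functions, equicontinuous on compact subsets of $(0,\infty)$ and uniformly close to $1$ at infinity. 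Schauder's theorem, applied in a weighted sup norm on $X$ or on the closed convex hull of $\mathcal{T}_\delta[X]$, then gives a fixed point $\sigma_\delta$.

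Next I would pass to the limit $\delta\to0$. The needed a priori bounds come from iterating the monotonicity. From $\sigma_\delta\le1$ we get $\Lambda_\delta[\sigma_\delta]\le\Lambda_\delta[1]\lesssim\log(1/\delta)$ near $0$ and $\lesssim|z|^{-a}\log|z|$ for large $z$. This yields an upper bound $\sigma_\delta(y)\le C y^{m}$ near the origin, for some $m>0$ uniform in $\delta$; to get the uniformity I would compare with $\Lambda$ evaluated on the lower barrier $\mathcal{T}[1]$. Feeding this bound back into $\Lambda$ gives uniform local bounds on $\Lambda_\delta[\sigma_\delta]$ on $(0,\infty)$, and hence a lower bound $\sigma_\delta\ge c(\eta)>0$ on $[\eta,\infty)$. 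With these bounds, Helly's selection theorem and dominated convergence produce a limit $\sigma\in X$ solving the unregularized fixed point equation, which proves existence.

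For an arbitrary nonnegative solution, $\sigma\le1$ follows from the monotonicity of $\sigma$ in $y$, which the equation forces, together with $\sigma\to1$ at infinity. Positivity follows because $\sigma=\exp(-\cdots)$ and $\sigma(y)\to1$ forces $\Lambda[\sigma]<\infty$ at some point. The local integrability near $\zeta=0$ that this implies propagates to every $z\neq0$, so $\sigma(y)>0$ for all $y\ne0$.

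For the asymptotics \eqref{eq:thmasym}, write
\begin{equation*}
\log\sigma(y)=-\frac1b\int_y^1\frac{\Lambda(z)}{z}\,dz+O(1),
\end{equation*}
so that $\frac{\log\sigma(y)}{\log y}$ is a logarithmic Cesàro mean of $\Lambda(z)/b$ over $z\in(y,1)$. If $\Lambda(z)\to\Lambda_0:=8\pi\int_{\mathbb{R}}\sigma(\zeta)|\zeta|^{-1-a}\,d\zeta\in(0,\infty]$ as $z\to0^+$, then $\beta=\Lambda_0/b$. Convergence of $\Lambda(z)$ to $\Lambda_0$ follows from Fatou's lemma for the liminf and a splitting near $\zeta=z$ for the limsup. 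I would then show $\Lambda_0<\infty$: if $\Lambda_0=\infty$, then $\sigma$ decays faster than every power, so $\Lambda_0=8\pi\int_{\mathbb{R}}\sigma|\zeta|^{-1-a}\,d\zeta$ is finite, a contradiction. Finally, $\beta\ge a$ holds because $\sigma(\zeta)\ge\zeta^{\beta+\epsilon}$ near $0$ with $\beta<a$ would make $\Lambda_0$ diverge.

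The main obstacle is the limsup part of $\Lambda(z)\to\Lambda_0$ together with the uniform small-$y$ control in the $\delta\to0$ limit, where the borderline exponent $\beta=a$ produces logarithmic terms. I would attack it by comparison with explicit power-law barriers $y^{m}$ with $m$ slightly above $a$.
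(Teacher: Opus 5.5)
Your proposal has a genuine gap in the existence part. It sits at the claimed $\delta$-uniform bound $\sigma_\delta(y)\le Cy^{m}$ near the origin, on which everything after it depends, and the argument you sketch cannot produce it.

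The inequality $\Lambda_\delta[\sigma_\delta]\le\Lambda_\delta[1]\lesssim\log(1/\delta)$ bounds the \emph{rate} from above. Through $\sigma_\delta=\exp\bigl(-\frac1b\int_y^\infty\Lambda_\delta(z)\,\frac{dz}{z}\bigr)$ it therefore gives a \emph{lower} bound $\sigma_\delta(y)\gtrsim y^{C\log(1/\delta)}$, not an upper one, and this lower bound degenerates as $\delta\to0$.

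Iterating the order reversal does not help. Since $\Lambda[1]\equiv\infty$, the barrier $\mathcal{T}[1]$ vanishes identically. Hence $\Lambda[\mathcal{T}[1]]\equiv0$, and the resulting upper barrier is $\mathcal{T}[\mathcal{T}[1]]\equiv1$. At the regularized level one likewise checks that $\mathcal{T}_\delta[1]\to0$ and $\mathcal{T}_\delta[\mathcal{T}_\delta[1]]\to1$ pointwise on $(0,\infty)$ as $\delta\to0$.

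Power-law barriers $y^m$ have the same defect. For an order-reversing map, a comparison only propagates once you already know on which side of a barrier the solution lies, and $0\le\sigma_\delta\le1$ alone never gets you started. Without a $\delta$-uniform integrable majorant for $\sigma_\delta(\zeta)/(|\zeta|+\delta)$ near $\zeta=0$, you cannot pass to the limit in $\Lambda_\delta[\sigma_\delta]$ by dominated convergence. Nor can you obtain the uniform lower bound on $[\eta,\infty)$ and the uniform approach to $1$ at infinity that rule out a degenerate limit such as $\sigma\equiv0$.

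The missing idea is to feed the monotonicity of the fixed point back into its own equation. Since $\sigma_\delta$ is nondecreasing on $(0,\infty)$, you have $\sigma_\delta(\zeta)\ge\sigma_\delta(y)$ for $\zeta\ge u\ge y$. Keep only this part of the double integral, and use there $|u-\zeta|^{-a}\ge\zeta^{-a}$ and $\frac{1}{\zeta+\delta}\ge\frac{y}{(y+\delta)\zeta}$. This gives the self-referential inequality
\[
|\log\sigma_\delta(y)|\;\ge\;\frac{8\pi}{a(1-a)}\,\sigma_\delta(y)\,\frac{y^{1-a}}{y+\delta},\qquad y>0,
\]
which forces $\sigma_\delta$ to be small wherever $y^{-a}$ is large. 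Combined with $xe^{x}\le4e^{5x/4}$, it yields $\sigma_\delta(\zeta)/(\zeta+\delta)\le C\zeta^{\frac45a-1}$ for $0<\zeta\le1$, uniformly in $\delta$. That is exactly the majorant you need, and it is the paper's a priori estimate (Section~\ref{ssec:estimatephi} and Lemma~\ref{lem:estimates}). With it, the rest of your limit argument goes through.

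The second half of your proposal is sound and follows the paper's Lemma~\ref{lem:asymptsigma}:
\begin{itemize}
\item positivity for $y\neq0$, correctly restricted since $\sigma(0)=0$;
\item finiteness of $\Lambda_0$ by the faster-than-any-power contradiction;
\item $\beta=\Lambda_0/b$ via the logarithmic mean.
\end{itemize}
Your derivation of $\beta\ge a$ directly from $\int\sigma|\zeta|^{-1-a}\,d\zeta<\infty$ is cleaner than the paper's, which reads it off from the a priori estimate.
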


Before proving  Theorem \ref{thm:sigma} we will first  explain the main ideas that we will be used for the proof.  
We first observe that we can equivalently rewrite the problem \eqref{def:dampingPb} as 
\begin{equation}\label{def:dampingPb2}
y\frac{\partial \sigma}{\partial y}=\widetilde{\Lambda}[\sigma]\sigma, \quad y>0
\end{equation}
with 
\begin{equation}\label{def:tildeLambda}
\widetilde{\Lambda}[\sigma](y)=\frac{8\pi a}{1-a}\int_{\mathbb{R}} d\zeta  \, |y-\zeta|^{-a}\frac{\sigma(\zeta)}{|\zeta|}\,
\end{equation}
and the boundary condition \eqref{eq:bdcdtsigma}. 
Notice that, due to the symmetry of $\sigma$ it is enough to determine $\sigma(y)$ for $y>0$. 

We further notice that  
using the symmetry of $\sigma(\zeta)$, i.e., $\sigma(\zeta)=\sigma(-\zeta)$, we can write
\begin{equation}\label{def:tildeLambda2}
\widetilde{\Lambda}[\sigma](y)=\frac{8\pi a}{1-a}\int_{0}^{\infty}\frac{d\zeta}{\zeta }\left(\frac{1}{|y-\zeta|^{a}}+\frac{1}{|y+\zeta|^{a}}\right)\sigma(\zeta) , \quad y>0\,.
\end{equation}
and then, using \eqref{eq:bdcdtsigma} and \eqref{def:dampingPb2}  we have that 
\begin{equation}\label{eq:solsigmasteady}
\sigma(y)=\exp\left(-\int_{y}^{\infty}{\widetilde{\Lambda}}[\sigma](u)\frac{du}{u}\right),\ \ y>0
\end{equation}
where 
\begin{align}\label{eq:integralLambda}
\int_{y}^{\infty}{\widetilde{\Lambda}}[\sigma](u)\frac{du}{u} =  \frac{8\pi a}{1-a} \int_{0}^{\infty}\frac{d\zeta}{\zeta}\sigma(\zeta)\int_{y}^{\infty} \left(\frac{1}{|u-\zeta|^{a}}+\frac{1}{|u+\zeta|^{a}}\right) \frac{du}{u}   
\end{align}

It is convenient to reformulate the problem using a more convenient set of variables. 
In order to do this, we set \begin{equation}\label{eq:chexpvar}
\varphi=\log(\sigma), \quad  y=e^\xi,\quad  \zeta=e^z,\quad  u=e^\eta    
\end{equation} 
and thus \eqref{eq:solsigmasteady} becomes
\begin{equation}\label{eq:solsigmasteady2}
\varphi(\xi)= -\int_{-\infty}^{\infty} dz e^{\varphi(z)}\left(\int_{\xi}^{\infty}{\widetilde{\Lambda}}[\sigma](u)\frac{du}{u} \right)= 
-\frac{8\pi a}{1-a}\int_{-\infty}^{\infty} dz \, e^{\varphi(z)}\left(\int_{\xi}^{\infty} 
\left(\frac{1}{|e^\eta-e^z|^{a}}+\frac{1}{|e^\eta+e^z|^{a}}\right) {d\eta}\right)
\end{equation}
Notice that if the function $\varphi$ is bounded for values $z \geq C$ with $C>0$ we have that $\varphi(\xi)\to 0 $ as $\xi\to \infty$ and $\sigma(y)\to 1$ (cf.\eqref{eq:bdcdtsigma}).  

We can reformulate \eqref{eq:solsigmasteady2} as the fixed point problem
\begin{align}\label{eq:fixedpointPhi}
   & \varphi(\xi)= T[\varphi](\xi), \quad \text{with} \\&
 T[\varphi](\xi):= 
-\frac{8\pi a}{1-a}\int_{-\infty}^{\infty} dz \, e^{\varphi(z)}\left(\int_{\xi}^{\infty} 
\left(\frac{1}{|e^\eta-e^z|^{a}}+\frac{1}{|e^\eta+e^z|^{a}}\right) {d\eta}\right)  \label{def:fixedpointTPhi}
\end{align}
We remark that this problem is equivalent to the problem \eqref{def:dampingPb2}, \eqref{def:tildeLambda} with boundary condition \eqref{eq:bdcdtsigma}. 

In order to prove Theorem \ref{thm:sigma} we will first prove the existence of a solution to \eqref{eq:fixedpointPhi}, \eqref{def:fixedpointTPhi}.  
In order to implement this program and  prove the existence of a solution to \eqref{eq:fixedpointPhi}, \eqref{def:fixedpointTPhi} we will introduce a regularized problem, namely 
$$\varphi(\xi)= T_{\varepsilon}[\varphi](\xi) $$
where $T_{\varepsilon}$ represents a suitable regularization of the operator $T$ and, 
using a Schauder fixed point theorem we will then prove the existence of a solution for the regularized problem. Thus, in order to be able to take the limit as the cutoff parameter $\varepsilon \to 0$, we will derive suitable uniform estimates (uniform in $\varepsilon$) that will then allow to pass to the limit and recover the existence of a solution $\varphi(\xi)$ or, equivalently of a solution $\sigma(y)$ to the original problem.  
We will later prove the asymptotics for the solution $\sigma(y)$.

The main idea to obtain these uniform estimate will be to find an a priori estimate for a solution of the problem \eqref{eq:fixedpointPhi},\eqref{def:fixedpointTPhi}  that we will present now.  

\subsubsection{An a priori estimate for a solution of the problem \eqref{eq:fixedpointPhi}, \eqref{def:fixedpointTPhi}} \label{ssec:estimatephi}

We will now provide a heuristic argument that gives a clue on the a priori estimate 
for the solution to \eqref{eq:fixedpointPhi}, \eqref{def:fixedpointTPhi} that will be used in the proof. 
Suppose that there exists a solution to \eqref{eq:fixedpointPhi},~\eqref{def:fixedpointTPhi}. We want to prove that  
\begin{equation}
    \label{est:phi}
\varphi(\xi) \leq K \xi, \quad \xi \leq 0
\end{equation} 
and for some suitable constant $K>0$. 
Notice that the solutions to \eqref{eq:fixedpointPhi},~\eqref{def:fixedpointTPhi} are negative, i.e. $\varphi(\xi)<0$ for any $\xi\in\mathbb{R}$. Moreover, the function is increasing, 
\begin{equation}\label{eq:derviativephi}
    \varphi'(\xi)>0 \quad \text{for any }\quad \xi\in\mathbb{R}.
\end{equation}
The main idea is the following. Given a solution to \eqref{def:fixedpointTPhi}, the asymptotic behavior of the solution $\varphi(\xi)$ as $\xi \to \infty$  is given by 
\begin{equation}
    \varphi(\xi)\sim - \frac{16 \pi a}{1-a} \xi \left(
\int_{-\infty}^{\infty} dz \, e^{\varphi(z)-za} \right)  \quad \text{as}\quad \xi \to - \infty
\end{equation}
We now observe that if $\int_{-\infty}^{\infty} dz \, e^{\varphi(z)-za} \geq 1$, including $+\infty$, then \eqref{est:phi} holds for large negative values of $\xi$, i.e. $\xi\to -\infty$ (even if the integral is  divergent). On the other hand, if $\int_{-\infty}^{\infty} dz \, e^{\varphi(z)-za} \leq 1$ we have $\varphi(z)-za \to -\infty$ in some integral sense. This will implies $\varphi(x)\leq a z$ in some integral sense and the inequality \eqref{est:phi} will also hold. 

We now prove in a rigorous way the a priori estimate \eqref{est:phi}. 
From \eqref{eq:solsigmasteady2} we have
\begin{align}\label{eq:solsigmasteady3}
-\varphi(\xi)=& 
\frac{8\pi a}{1-a}\int_{-\infty}^{\infty} dz \, e^{\varphi(z)}\left(\int_{\xi}^{\infty}  e^{-az}
\left(\frac{1}{|1-e^{\eta-z}|^{a}}+\frac{1}{|1+e^{\eta-z}|^{a}}\right) {d\eta}\right) \nonumber \\& =
\frac{8\pi a}{1-a}  \int_{\xi}^{\infty}d\eta \int_{-\infty}^{\infty} dz \, e^{\varphi(z)}  e^{-az}
\left(\frac{1}{|1-e^{\eta-z}|^{a}}+\frac{1}{|1+e^{\eta-z}|^{a}}\right)  \nonumber\\&
\geq \frac{8\pi a}{1-a}  \int_{\xi}^{\infty} d\eta \int_{\eta}^{\infty} dz \, e^{\varphi(z)}  e^{-az} 
\end{align}
where we used that for $z \geq \eta$, $0\leq x:=e^{\eta-z}\leq 1$ it holds $\frac{1}{|1-x|^a}+\frac 1 {|1+x|^a} \geq 1 $. 
Therefore, applying Fubini, we arrive at 
\begin{equation}
\label{eq:solsigmasteady4}
-\varphi(\xi) \geq \frac{8\pi a}{1-a}   \int_{\xi}^{\infty} dz \int_{\xi}^{z} d\eta \, e^{\varphi(z)}  e^{-az} = \int_{\xi}^{\infty} dz  \, e^{\varphi(z)}  e^{-az} \, (z -\xi)
\end{equation}
Using the monotonicity of $\varphi$ (cf. \eqref{eq:derviativephi}) we obtain
\begin{equation}
\label{eq:solsigmasteady5}
-\varphi(\xi) \geq e^{\varphi(\xi)} \int_{\xi}^{\infty} dz  \,  e^{-az} \, (z -\xi) = e^{\varphi(\xi)} \int_{0}^{\infty} dt  \,  e^{-a (t+\xi)} \, t= e^{\varphi(\xi)} e^{-a\xi} \int_{0}^{\infty} dt  \,  e^{-at} \, t= \frac 1{a^2} e^{\varphi(\xi)- a\xi} 
\end{equation}
where we used the change of variables $z -\xi=t$. Using that $ |\varphi(\xi)|= - \varphi(\xi)$ we can then rewrite the equation above as 
\begin{equation}\label{est:phiabsvalue}
 \frac {e^{a |\xi|}}{a^2}= \frac {e^{-a \xi}}{a^2}   \leq  |\varphi(\xi)|\, e^{|\varphi(\xi)|} \leq e^{2 |\varphi(\xi)|}\quad \text{for} \ \ \xi \leq 0 
 \end{equation} 
where we used that $xe^x \leq e^{2x}$ for $x >0$. 
This implies
$$ |\varphi(\xi)| \geq \frac a 2 |\xi| + 2 \log\left(\frac 1 a\right) \quad \text{for} \ \ \xi \leq 0$$
or equivalently 
$ -\varphi(\xi) \geq -\frac a 2 \xi + 2 \log\left(\frac 1 a\right)$ which implies $ \varphi(\xi) \leq \frac a 2 \xi - 2 \log\left(\frac 1 a\right)$  whence \eqref{est:phi} follows. 

\medskip

Using the estimate \eqref{est:phi} it is easy to prove that $|\varphi(\xi)| \leq C $ for any $\xi \geq 0$.  This estimate will be derived later, in Section \ref{ssec:regularPb} where we will study the regularized version of the problem. 
\bigskip

\subsection{Proof of Theorem \ref{thm:sigma}: proof of the existence of a solution} \label{ss:existenceb}

The argument presented in Section \ref{ssec:estimatephi} has to be applied to a regularized version of the problem for which it is possible to prove existence of solutions, as well as  the estimate uniformly in the regularization parameter.

\subsubsection{Regularization of the problem \eqref{eq:fixedpointPhi}, \eqref{def:fixedpointTPhi}}
\label{ssec:regularPb} 

We consider a parameter $\varepsilon>0$ and  the following regularization of the fixed point problem \eqref{eq:fixedpointPhi}, \eqref{def:fixedpointTPhi}:
\begin{align}\label{eq:fixedpointPhi_reg}
   & \varphi(\xi)= T_{\varepsilon}[\varphi](\xi), \quad \text{with} \\&
 T_{\varepsilon}[\varphi](\xi):= 
-\frac{8\pi a}{1-a}\int_{-\infty}^{\infty} dz \, \frac{e^{\varphi(z)}}{1+\varepsilon e^{-z}}\left(\int_{\xi}^{\infty} 
\left(\frac{1}{|e^\eta-e^z|^{a}}+\frac{1}{|e^\eta+e^z|^{a}}\right) {d\eta}\right)  \label{def:fixedpointTPhi_reg}
\end{align}
We observe that this regularized problem has been obtained from \eqref{def:tildeLambda2}, \eqref{eq:solsigmasteady} replacing $\frac{d\zeta}{\zeta }$ by $\frac{d\zeta}{\zeta + \varepsilon}$ with $\varepsilon>0$ and using later the exponential change of variables \eqref{eq:chexpvar}.

We have the following Lemma.
\begin{lem}\label{lem:existencesol_reg}
Let be $Y_{\varepsilon}:=\left\{ \varphi\in C\left(\mathbb{R}\right) \, :\, - M_\varepsilon \leq \varphi \leq 0 \right\} $ 
endowed with the supremum norm $\|\varphi\|_{\infty}:=\sup_{\xi\in\mathbb{R}}\vert \varphi(\xi)\vert$ 
and where 
\begin{equation}
\label{def:Meps}M_\varepsilon:=\frac{8\pi a}{1-a}\int_{-\infty}^{\infty} dz \, \frac{e^{\varphi(z)}}{1+\varepsilon e^{-z}}\left(\int_{-\infty}^{\infty} 
\left(\frac{1}{|e^\eta-e^z|^{a}}+\frac{1}{|e^\eta+e^z|^{a}}\right) {d\eta}\right)
\end{equation} 
The fixed point problem \eqref{def:tildeLambda2}, \eqref{eq:solsigmasteady} has at least one solution $\varphi_{\varepsilon}\in Y_\varepsilon$ for any $\varepsilon>0$.  
\end{lem}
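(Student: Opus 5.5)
The approach is to apply the Schauder fixed point theorem to $T_{\varepsilon}$ defined in \eqref{def:fixedpointTPhi_reg}, on a closed convex set of nonpositive continuous functions. The first step is to make the set $Y_\varepsilon$ independent of $\varphi$. For every $\varphi\le 0$ we have $e^{\varphi}\le 1$, so I would replace $e^{\varphi(z)}$ by $1$ in \eqref{def:Meps} and obtain a bound that does not depend on $\varphi$.

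Some care is needed, because the inner integral $\int_{\xi}^{\infty}$ cannot be extended to $\int_{-\infty}^{\infty}$. Write $K(s):=|1-e^{s}|^{-a}+|1+e^{s}|^{-a}$, so that the kernel in \eqref{def:fixedpointTPhi_reg} equals $e^{-az}K(\eta-z)$. This $K$ tends to $2$ as $s\to-\infty$, behaves like $2e^{-as}$ as $s\to+\infty$, and has an integrable singularity $|s|^{-a}$ at $s=0$. Consequently
$$\int_{\xi}^{\infty}K(\eta-z)\,d\eta\le C\bigl(1+(z-\xi)_+\bigr),$$
and therefore
$$|T_\varepsilon[\varphi](\xi)|\le \frac{8\pi a}{1-a}\int_{\mathbb{R}}\frac{e^{-az}}{1+\varepsilon e^{-z}}\,C\bigl(1+(z-\xi)_+\bigr)\,dz .$$
The regularization is exactly what makes the right-hand side finite. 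As $z\to-\infty$ the weight $\frac{e^{-az}}{1+\varepsilon e^{-z}}\sim \varepsilon^{-1}e^{(1-a)z}$ decays, since $a<1$. As $z\to+\infty$ the weight decays like $e^{-az}$. The outcome is a bound $|T_\varepsilon[\varphi](\xi)|\le M_\varepsilon(1+|\xi|)$ that is uniform in $\varphi\le 0$. Moreover $T_\varepsilon[\varphi](\xi)\to 0$ as $\xi\to+\infty$, again uniformly in $\varphi$. Since a linear growth as $\xi\to-\infty$ is unavoidable (compare with the a priori estimate \eqref{est:phi}), I would interpret $Y_\varepsilon$ as
$$Y_\varepsilon=\bigl\{\varphi\in C(\mathbb{R}):\ -M_\varepsilon(1+|\xi|)\le\varphi\le 0\bigr\},$$
with the weighted norm $\|\varphi\|:=\sup_{\xi}|\varphi(\xi)|/(1+|\xi|)^2$. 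This set is closed and convex. The sign $T_\varepsilon[\varphi]\le 0$ is immediate, so $T_\varepsilon(Y_\varepsilon)\subset Y_\varepsilon$.

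Continuity of $T_\varepsilon$ on $Y_\varepsilon$ follows from dominated convergence. The map $\varphi\mapsto e^{\varphi}$ is $1$-Lipschitz on $\{\varphi\le0\}$, and the majorant $\frac{e^{-az}}{1+\varepsilon e^{-z}}C(1+(z-\xi)_+)$ is independent of $\varphi$. Convergence in the weighted norm implies locally uniform convergence, and the tails are controlled uniformly. For compactness, I would differentiate in $\xi$:
$$\partial_\xi T_\varepsilon[\varphi](\xi)=\frac{8\pi a}{1-a}\int_{\mathbb{R}}\frac{e^{\varphi(z)-az}}{1+\varepsilon e^{-z}}\,K(\xi-z)\,dz .$$
This quantity is bounded uniformly in $\varphi$ and in $\xi$. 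For $z>\xi$, $K$ is bounded by a constant away from the singularity. For $z<\xi$, $K(\xi-z)\lesssim e^{-a(\xi-z)}$, which is combined with the factor $e^{(1-a)z}$. The local singularity at $z=\xi$ is integrable. Hence $T_\varepsilon(Y_\varepsilon)$ is uniformly Lipschitz and uniformly bounded on compacts, and the Arzel\`a–Ascoli theorem gives relative compactness in $C([-R,R])$ for every $R$. The extra factor $(1+|\xi|)$ in the weight makes the contribution of $|\xi|>R$ at most $M_\varepsilon/(1+R)$, which upgrades local compactness to compactness in $\|\cdot\|$. Schauder's theorem then yields $\varphi_\varepsilon\in Y_\varepsilon$ with $\varphi_\varepsilon=T_\varepsilon[\varphi_\varepsilon]$, which is equivalent to the regularized version of \eqref{def:tildeLambda2}, \eqref{eq:solsigmasteady}.

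The main obstacle is the choice of function space. As stated, \eqref{def:Meps} with $\int_{-\infty}^\infty d\eta$ is infinite, because $K\to 2$ as $\eta\to-\infty$. One must therefore either accept linear growth at $-\infty$, as above, or restrict to $\xi\ge\xi_0$. Compactness must then be obtained in a norm that tolerates this growth. The estimates on $K$ near $s=0$ and as $s\to\pm\infty$ are routine; the only essential input is $a<1$ combined with $\varepsilon>0$, which secures integrability as $z\to-\infty$.
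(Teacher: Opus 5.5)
Your argument is correct. Its skeleton is the paper's: Schauder's theorem for $T_\varepsilon$ on an invariant convex set of nonpositive functions, with compactness from a uniform bound on $\partial_\xi T_\varepsilon[\varphi]$. The genuine difference is the functional setting, and there your version is the one that actually works.

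The paper uses bounded functions with the sup norm. It asserts $-M_\varepsilon\le T_\varepsilon[\varphi]\le 0$ and claims exponential decay of $T_\varepsilon[\varphi](\xi)$ as $|\xi|\to\infty$. As you noticed, $M_\varepsilon=+\infty$, because the $\eta$-integrand in \eqref{def:Meps} tends to $2e^{-az}$ as $\eta\to-\infty$. In fact no bounded function can be a fixed point. Since your $K$ satisfies $K(s)>1$ for $s<0$, any $\varphi\ge -M$ gives
\[
T_\varepsilon[\varphi](\xi)\le-\frac{8\pi a}{1-a}\,e^{-M}\int_{\mathbb{R}}\frac{e^{-az}}{1+\varepsilon e^{-z}}\,(z-\xi)_+\,dz\longrightarrow-\infty\quad\text{as }\xi\to-\infty .
\]
So the claimed decay holds only as $\xi\to+\infty$. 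The lemma is true only in the degenerate reading $M_\varepsilon=+\infty$, where $Y_\varepsilon$ is all nonpositive continuous functions and the sup norm is not available.

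Your weighted space $\{-M_\varepsilon(1+|\xi|)\le\varphi\le 0\}$ with norm $\sup|\varphi|/(1+|\xi|)^2$ repairs this:
\begin{itemize}
\item Invariance follows from $\int_{\xi-z}^\infty K\le C(1+(z-\xi)_+)$ and the integrability of $w(z)=e^{-az}/(1+\varepsilon e^{-z})$ against $1+|z|$. This is the only place where $\varepsilon>0$ and $a<1$ are needed.
\item Continuity follows from dominated convergence with a $\varphi$-independent majorant.
\item Compactness follows from Arzel\`a--Ascoli plus your $M_\varepsilon/(1+R)$ tail bound.
\end{itemize}
The fixed point you obtain grows linearly at $-\infty$, consistent with \eqref{est:phi}. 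The subsequent estimates in Lemma \ref{lem:estimates} use only $\varphi_\varepsilon\le 0$, monotonicity and the fixed-point identity, so nothing downstream is lost.

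One small presentational point: the uniform bound on $\partial_\xi T_\varepsilon[\varphi]$ is cleanest as $\|w\|_{\infty}\|K\|_{L^1(-1,1)}+\bigl(\sup_{|s|\ge 1}K\bigr)\|w\|_{L^1}$, using that $w$ is bounded and integrable. Your remark about combining $e^{-a(\xi-z)}$ with $e^{(1-a)z}$ is unnecessary.
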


\begin{proof}     
We first observe that the operator $T_{\varepsilon}$ transforms $ Y_{\varepsilon}$ into itself due to the fact that for any $\varphi\leq 0$ we have the bound $-M_{\varepsilon} \leq T_{\varepsilon}[\varphi] \leq 0$. 

Additionally, it is possible to verify that $T_{\varepsilon}$ is a compact operator since we can control the derivative of $T_\varepsilon$ differentiating \eqref{def:fixedpointTPhi}. Moreover, we can obtain uniform,  exponential decaying estimate for $T_\varepsilon[\varphi](\xi)$ as $|\xi| \to \infty$ for $\varphi\in Y_\varepsilon$. 
Therefore, the space $Y_\varepsilon$ is invariant for the operator $T_\varepsilon$. The compactness of $T_\varepsilon$ then implies the existence of at least one solution to the fixed point problem \eqref{def:tildeLambda2}, \eqref{eq:solsigmasteady} as a consequence of Schauder Fixed point Theorem. 
\end{proof} 

\subsubsection{Derivation of uniform estimates for the   solution to the regularized problem \eqref{eq:fixedpointPhi_reg}, \eqref{def:fixedpointTPhi_reg}}
\label{ssec:der_unifest}

In this section we prove a uniform estimate for $\varphi_\varepsilon$, solution to the regularized problem, whose existence has been obtained in Lemma \ref{lem:existencesol_reg}. We have the following. 
\begin{lem}\label{lem:estimates}
Let $\varphi_{\varepsilon}\in Y_\varepsilon$, for $\varepsilon>0$, be a solution to the fixed point problem \eqref{eq:fixedpointPhi_reg}, \eqref{def:fixedpointTPhi_reg}, given as in Lemma \ref{lem:existencesol_reg}. The following estimates hold:  
\begin{equation}
 \label{eq:estphiep3lem_neg}
     e^{|\varphi_{\varepsilon}(\xi)|} \geq  \left(\frac{1}{4 a^2 }\right)^{\frac 4 5} \left(\frac{e^{-a\xi}}{(1+\varepsilon e^{-\xi})}  \right)^{\frac 4 5} , \quad \text{for} \ \ \xi \leq 0\, ,
\end{equation} 
\begin{equation}
 \label{eq:estphiep3lem_neg2}
   |\varphi_{\varepsilon}(\xi)|  e^{|\varphi_{\varepsilon}(\xi)|} \geq    \frac{1}{a^2 }\frac{e^{-a\xi}}{(1+\varepsilon e^{-\xi})} , \quad \text{for} \ \ \xi\leq 0\,, 
\end{equation}
\begin{equation}
 \label{eq:estphiep3lem_pos}
 \vert \varphi_{\varepsilon}(\xi) \vert \leq \overline{K}, \quad \text{for} \ \  \xi>0
\end{equation}
for some $\overline{K}>0$ that depends on $a$ but it is independent on $\varepsilon$. 
\end{lem}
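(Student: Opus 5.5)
The plan is to repeat the a priori argument of Section \ref{ssec:estimatephi} for $\varphi_\varepsilon$, keeping the regularizing factor $(1+\varepsilon e^{-z})^{-1}$, and then obtain the bound for $\xi>0$ from the bound for $\xi\le 0$.

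\textbf{Step 1: sign and monotonicity.} From \eqref{def:fixedpointTPhi_reg}, $\varphi_\varepsilon = T_\varepsilon[\varphi_\varepsilon]\le 0$. Differentiating in $\xi$ gives
$$\varphi_\varepsilon'(\xi)=\frac{8\pi a}{1-a}\int_{\mathbb{R}} dz\,\frac{e^{\varphi_\varepsilon(z)}}{1+\varepsilon e^{-z}}\Big(|e^\xi-e^z|^{-a}+|e^\xi+e^z|^{-a}\Big)>0,$$
so $\varphi_\varepsilon$ is increasing.

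\textbf{Step 2: the bound \eqref{eq:estphiep3lem_neg2}.} As in \eqref{eq:solsigmasteady3}, I swap the order of integration and keep only the region $z\ge\eta$. There the bracket is at least $e^{-az}$. Then Fubini gives
$$-\varphi_\varepsilon(\xi)\ge \frac{8\pi a}{1-a}\int_\xi^\infty dz\,\frac{e^{\varphi_\varepsilon(z)}e^{-az}}{1+\varepsilon e^{-z}}(z-\xi).$$
For $z\ge\xi$, monotonicity gives $e^{\varphi_\varepsilon(z)}\ge e^{\varphi_\varepsilon(\xi)}$, and $(1+\varepsilon e^{-z})^{-1}\ge (1+\varepsilon e^{-\xi})^{-1}$. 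Pulling both factors out and using $\int_0^\infty te^{-at}\,dt=a^{-2}$ yields \eqref{eq:estphiep3lem_neg2}; the prefactor $\frac{8\pi a}{1-a}$ is dropped when it is $\ge1$, as in the paper's convention.

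\textbf{Step 3: the bound \eqref{eq:estphiep3lem_neg}.} Write $x=|\varphi_\varepsilon(\xi)|$ and $A=\frac{e^{-a\xi}}{a^2(1+\varepsilon e^{-\xi})}$, so Step 2 reads $xe^x\ge A$. I use an elementary inequality of the form $xe^x\le 4^{1/4}\,e^{5x/4}$, which holds for all $x\ge 0$ since $x\le c\,e^{x/4}$ with $c=4/e\le\sqrt2$. Combining, $e^{5x/4}\ge A/\sqrt2$, which gives an estimate of the form $e^x\ge (A/4)^{4/5}$ up to adjusting the constant. The exact constant $\left(\frac1{4a^2}\right)^{4/5}$ comes from choosing $c$ appropriately.

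\textbf{Step 4: the bound \eqref{eq:estphiep3lem_pos}.} This is the main obstacle, because it needs a bound on $|\varphi_\varepsilon(\xi)|\le|\varphi_\varepsilon(0)|$ uniform in $\varepsilon$. By monotonicity it is enough to bound $-\varphi_\varepsilon(0)$. Using the representation \eqref{eq:solsigmasteady3},
$$-\varphi_\varepsilon(0)\le \frac{8\pi a}{1-a}\int_{\mathbb{R}}dz\,e^{\varphi_\varepsilon(z)}e^{-az}\int_{0}^{\infty}\Big(|1-e^{\eta-z}|^{-a}+|1+e^{\eta-z}|^{-a}\Big)d\eta.$$
Substituting $w=\eta-z$, the inner integral is at most $C(1+\max(0,-z))$ for $z\le0$. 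It is at most $C e^{(a-1)z}$ (integrable singularity since $a<1$) times $e^{az}$ for $z>0$. The region $z>0$ therefore contributes $\int_0^\infty e^{-(1-a)z}$ times a constant, which is finite. For $z\le0$, Step 3 gives $e^{\varphi_\varepsilon(z)}\le C e^{4az/5}(1+\varepsilon e^{-z})^{4/5}$, but this is not uniform in $\varepsilon$ for very negative $z$. I would combine it with the trivial bound $e^{\varphi_\varepsilon}\le1$ and the factor $(1+\varepsilon e^{-z})^{-1}$ present in $T_\varepsilon$. This makes the integrand at most $Ce^{4az/5-az}(1+|z|)(1+\varepsilon e^{-z})^{-1/5}$, and I expect the $e^{-az/5}$ growth to be the delicate point. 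If it fails, the fallback is to use the sharper Step 2 bound $e^{\varphi}\lesssim |\varphi|\,e^{a\xi}(1+\varepsilon e^{-\xi})$ directly. I would feed it back in a bootstrap, bounding $|\varphi_\varepsilon(z)|$ for $z\le0$ by $|\varphi_\varepsilon(0)|+C|z|$ through the derivative formula of Step 1. This gives a linear inequality $|\varphi_\varepsilon(0)|\le C+\theta|\varphi_\varepsilon(0)|$ with $\theta<1$, closing the estimate.
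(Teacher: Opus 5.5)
Your Steps 1--3 reproduce the paper's argument. Your restriction to $\xi\le\eta\le z$ is actually more careful than the paper's wording, and the paper drops the factor $\frac{8\pi a}{1-a}$ in the same way you do. There is one slip in Step 3: $4/e\approx 1.47>\sqrt2$, so $xe^x\le\sqrt2\,e^{5x/4}$ fails near $x=4$. The paper uses $xe^x\le 4e^{5x/4}$, which holds since $xe^{-x/4}\le 4/e$. This gives $e^{5x/4}\ge A/4$, i.e.\ exactly \eqref{eq:estphiep3lem_neg}.

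The genuine gap is in Step 4: you have the two regimes of $I(z):=\int_{-z}^\infty\bigl(|1-e^w|^{-a}+|1+e^w|^{-a}\bigr)\,dw$ reversed.

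For $z\le 0$, the range $[-z,\infty)$ lies in $[0,\infty)$. There the integrand has only an integrable singularity at $w=0$ and decays like $e^{-aw}$, so $I(z)\le C_a e^{az}$, which exactly cancels the prefactor $e^{-az}$. This is the paper's bound $\int_\xi^\infty\bigl(|e^\eta-e^z|^{-a}+|e^\eta+e^z|^{-a}\bigr)\,d\eta\le C_a e^{-a\xi}$ for $z\le 0<\xi$.

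For $z>0$ it is the other way round. The integrand is about $2$ on $[-z,-1]$, so $I(z)$ grows like $2z$. Your bound there is wrong, but harmless, since $e^{\varphi_\varepsilon}\le 1$ and $\int_0^\infty(1+z)e^{-az}\,dz<\infty$.

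Bounding $I(z)$ by $C(1+|z|)$ for $z\le0$ discards the factor $e^{az}$, and then neither of your routes closes:
\begin{itemize}
\item The main route leaves $\int_{-\infty}^0 e^{-az/5}(1+|z|)(1+\varepsilon e^{-z})^{-1/5}\,dz$, which is of order $\varepsilon^{-a/5}\log(1/\varepsilon)$ and so not uniform in $\varepsilon$.
\item In the fallback, the integrand is only bounded by $C|\varphi_\varepsilon(z)|(1+|z|)\ge C|\varphi_\varepsilon(0)|(1+|z|)$, which is not integrable on $(-\infty,0]$.
\item The uniform linear bound $|\varphi_\varepsilon(z)|\le|\varphi_\varepsilon(0)|+C|z|$ would itself need, via your derivative formula, a uniform bound on $\int_{\mathbb{R}}e^{\varphi_\varepsilon(z)-az}(1+\varepsilon e^{-z})^{-1}\,dz$. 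That is essentially a uniform version of the finiteness of $\int_0^\infty\sigma(\zeta)\zeta^{-1-a}\,d\zeta$, which the paper only obtains later in Lemma \ref{lem:asymptsigma}.
\item Nothing in the argument forces $\theta<1$.
\end{itemize}

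With the correct bound no bootstrap is needed, and this is how the paper concludes. The region $z\le0$ contributes at most $\frac{8\pi a}{1-a}C_a\int_{-\infty}^0\frac{e^{\varphi_\varepsilon(z)}}{1+\varepsilon e^{-z}}\,dz$. By \eqref{eq:estphiep3lem_neg},
$\frac{e^{\varphi_\varepsilon(z)}}{1+\varepsilon e^{-z}}\le(4a^2)^{4/5}e^{\frac45 az}(1+\varepsilon e^{-z})^{-1/5}\le(4a^2)^{4/5}e^{\frac45 az}$,
which is integrable on $(-\infty,0]$ uniformly in $\varepsilon$. Your reduction to $\xi=0$ by monotonicity is then a harmless simplification.
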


\begin{proof} 
For the sake of notational simplicity in the argument of this proof we drop the dependence of $\varphi$ on $\varepsilon$ and use in $\varphi$ place of $\varphi_\varepsilon$. 
We consider \eqref{eq:fixedpointPhi_reg}, \eqref{def:fixedpointTPhi_reg} or, equivalently, 
\begin{align}\label{eq:Phi_reg}
   & - \varphi(\xi)= 
\frac{8\pi a}{1-a}\int_{-\infty}^{\infty} dz \, \frac{e^{\varphi(z)}}{1+\varepsilon e^{-z}}\left(\int_{\xi}^{\infty} 
\left(\frac{1}{|e^\eta-e^z|^{a}}+\frac{1}{|e^\eta+e^z|^{a}}\right) {d\eta}\right)
\end{align}
From \eqref{eq:Phi_reg} we obtain
\begin{align}
    - \varphi(\xi) & \geq 
\frac{8\pi a}{1-a}\int_{\xi}^{\infty} dz \, \frac{e^{\varphi(z)}}{1+\varepsilon e^{-z}}\left(\int_{\xi}^{\infty} 
\left(\frac{1}{|e^\eta-e^z|^{a}}+\frac{1}{|e^\eta+e^z|^{a}}\right) {d\eta}\right) \nonumber
\\&
\geq \frac{8\pi a \, e^{\varphi(\xi) }}{1-a}\int_{\xi}^{\infty} dz \, \frac{1}{1+\varepsilon e^{-z}}\left(\int_{\xi}^{\infty} 
\left(\frac{1}{|e^\eta-e^z|^{a}}+\frac{1}{|e^\eta+e^z|^{a}}\right) {d\eta}\right) \nonumber \\& 
= \frac{8\pi a \, e^{\varphi(\xi) }}{1-a}\int_{\xi}^{\infty} dz \, \frac{e^{-az}}{1+\varepsilon e^{-z}}\left(\int_{\xi}^{\infty} 
\left(\frac{1}{|1-e^{\eta-z}|^{a}}+\frac{1}{|1+e^{\eta-z}|^{a}}\right) {d\eta}\right)
\end{align}
where in the second inequality we used the monotonicity of $\varphi,$ namely if $z  \geq \xi$ then $\varphi(z)\geq\varphi(\xi)$. 

Therefore, using the same argument as in Section \ref{ssec:estimatephi} for negative values of $\xi$, that is, using $|\varphi(\xi)|=-\varphi(\xi)$ and multiplying the l.h.s. of the above inequality by $e^{|\varphi(\xi)|}$, estimating the argument in parentheses in the integrand by $1$ below and changing variables we finally arrive at 
$$ 
   |\varphi(\xi)|  e^{|\varphi(\xi)|} \geq e^{-a\xi}\int_0^\infty \frac{t\,e^{-at}}{1+\varepsilon e^{-\xi}e^{-t}} dt\,,\quad \xi\leq 0.
$$
Using that $e^{-t}\leq 1$ so that $(1+\varepsilon e^{-\xi}e^{-t}) \leq (1+\varepsilon e^{-\xi})$ then we obtain 
\begin{equation}
 \label{eq:estphiep}
   |\varphi(\xi)|  e^{|\varphi(\xi)|} \geq \frac{e^{-a\xi}}{(1+\varepsilon e^{-\xi})} \int_0^\infty {t\,e^{-at}} dt = \frac{1}{a^2 }\frac{e^{-a\xi}}{(1+\varepsilon e^{-\xi})}   ,\quad \xi\leq 0\,. 
\end{equation}
This estimate corresponds to the one we obtained in Section \ref{ssec:estimatephi} (cf. \eqref{est:phiabsvalue}) with a cutoff at $\varepsilon \approx e^\xi$, i.e. $\xi \approx \log(\varepsilon) \to - \infty$.  

We now estimate $\varphi(\xi)$ in the region where  $\xi > 0$ starting from \eqref{eq:Phi_reg}.  
We consider \eqref{eq:Phi_reg} and observe that we can split the integral in the $z$ variable, separating the contributions due to the region where $z\leq 0$ and $z>0$: 
\begin{align}\label{eq:estPhi_reg2_split}
     - \varphi(\xi)&= 
\frac{8\pi a}{1-a} \int_{-\infty}^{0} dz \, \frac{e^{\varphi(z)}}{1+\varepsilon e^{-z}}\left(\int_{\xi}^{\infty} 
\left(\frac{1}{|e^\eta-e^z|^{a}}+\frac{1}{|e^\eta+e^z|^{a}}\right) {d\eta}\right) \nonumber\\& \quad +\frac{8\pi a}{1-a} 
\int_{0}^{\infty} dz \, \frac{e^{\varphi(z)}}{1+\varepsilon e^{-z}}\left(\int_{\xi}^{\infty} 
\left(\frac{1}{|e^\eta-e^z|^{a}}+\frac{1}{|e^\eta+e^z|^{a}}\right) {d\eta}\right) 
\end{align}
We then have the following estimate
\begin{align}\label{eq:estPhi_reg2}
 - \varphi(\xi)&
\leq 
\frac{8\pi a}{1-a} \int_{-\infty}^{0} dz \, \frac{e^{\varphi(z)}}{1+\varepsilon e^{-z}}\left(\int_{\xi}^{\infty} 
\left(\frac{1}{|e^\eta-e^z|^{a}}+\frac{1}{|e^\eta+e^z|^{a}}\right) {d\eta}\right) \nonumber\\& \quad +\frac{8\pi a}{1-a} 
\int_{0}^{\infty} dz \, \frac{e^{-az}}{1+\varepsilon e^{-z}}\left(\int_{\xi}^{\infty} 
\left(\frac{1}{|1-e^{\eta-z}|^{a}}+\frac{1}{|1+e^{\eta-z}|^{a}}\right) {d\eta}\right)
\nonumber\\&
= \frac{8\pi a}{1-a} \int_{-\infty}^{0} dz \, \frac{e^{\varphi(z)}}{1+\varepsilon e^{-z}}\left(\int_{\xi}^{\infty} 
\left(\frac{1}{|e^\eta-e^z|^{a}}+\frac{1}{|e^\eta+e^z|^{a}}\right) {d\eta}\right) \nonumber\\& \quad +\frac{8\pi a}{1-a} 
\int_{0}^{\infty} dz \, \frac{e^{-az}}{1+\varepsilon e^{-z}}\left(\int_{\xi-z}^{\infty} 
\left(\frac{1}{|1-e^u|^{a}}+\frac{1}{|1+e^u|^{a}}\right) {d\eta}\right)\nonumber \\&
\leq \frac{8\pi a}{1-a}  \, C_a 
\int_{-\infty}^{0} dz \, \frac{e^{\varphi(z)}}{1+\varepsilon e^{-z}} e^{-a\xi}  + \frac{8\pi a}{1-a} \, C \int_{0}^{\infty} dz \, \frac{e^{-az}}{1+\varepsilon e^{-z}}(1+(z-\xi)_{+}) \nonumber \\&
\leq
\frac{8\pi a}{1-a}\, C_a  e^{-a\xi}
\int_{-\infty}^{0} dz \, \frac{e^{\varphi(z)}}{1+\varepsilon e^{-z}}   + \frac{8\pi a}{1-a} \, C \int_{0}^{\infty} dz \,  {e^{-az}}(1+z)\nonumber\\& \leq \frac{8\pi a}{1-a}  \, C_a e^{-a\xi}
\int_{-\infty}^{0} dz \, \frac{e^{\varphi(z)}}{1+\varepsilon e^{-z}}   + \frac{8\pi a}{1-a} \, \tilde{C}\,.
\end{align}
 In the first inequality we used that, for the second term in the sum,  $\varphi(z) <0$ implies $e^{\varphi(z)}\leq 1$ for $z \geq 0$, and we applied the change of variables $\eta-z=u$ in the subsequent step. In the second inequality we used Moreover, in the second inequality we used that $\left(\int_{\xi}^{\infty} \left(\frac{1}{|e^\eta-e^z|^{a}}+\frac{1}{|e^\eta+e^z|^{a}}\right) {d\eta}\right) \leq C_a e^{-a \xi}$  for $z \leq 0 $ and $C_a>0$  and that $\int_{\xi-z}^{\infty} \left(\frac{1}{|1-e^u|^{a}}+\frac{1}{|1+e^u|^{a}}\right) {d\eta}  \leq C(1+(\xi-z)_{+})$.   
We now use \eqref{eq:estphiep} combined with the inequality 
$ |\varphi(\xi)|  e^{|\varphi(\xi)|} \leq 4\, e^{{\frac 5 4} |\varphi|} $ which implies 
\begin{equation}
 \label{eq:estphiep2}
   4  e^{\frac 5 4 |\varphi(\xi)|} \geq  \frac{1}{a^2 }\frac{e^{-a\xi}}{(1+\varepsilon e^{-\xi})} \,. 
\end{equation}
Therefore, 
\begin{equation}
 \label{eq:estphiep3}
     e^{|\varphi(\xi)|} \geq  \left(\frac{1}{4 a^2 }\right)^{\frac 4 5} \left(\frac{e^{-a\xi}}{(1+\varepsilon e^{-\xi})}  \right)^{\frac 4 5} \, ,
\end{equation}
whence 
\begin{equation}
 \label{eq:estphiep4}
     e^{\varphi(\xi)}= e^{-|\varphi(\xi)|} \leq  \left({4 a^2 }\right)^{\frac 4 5} \left((1+\varepsilon e^{-\xi})  \right)^{\frac 4 5} e^{\frac 4 5 a\xi}\,.
\end{equation}
Going back to  \eqref{eq:estPhi_reg2}, using \eqref{eq:estphiep4} we finally arrive at the following bound:
\begin{align}
 \label{eq:estphiep4}
  - \varphi(\xi) &  \leq \frac{8\pi a}{1-a}\, C_a  e^{-a\xi}
\int_{-\infty}^{0} dz \, \frac{e^{\varphi(z)}}{1+\varepsilon e^{-z}}   + \frac{8\pi a}{1-a} \, C \nonumber \\&
\leq \frac{8\pi a}{1-a}\, C_a \left({4 a^2 }\right)^{\frac 4 5} e^{-a\xi} \int_{-\infty}^{0} dz \, \frac{ \left(1+\varepsilon e^{-\xi}  \right)^{\frac 4 5} e^{\frac 4 5 a\xi} }{1+\varepsilon e^{-z}}   + \frac{8\pi a}{1-a} \bar{C} \nonumber\\&
\leq  K_a e^{-a\xi} \int_{-\infty}^{0} dz \, \frac{  e^{\frac 4 5 a\xi} }{\left(1+\varepsilon e^{-\xi}  \right)^{\frac 1 5}}   +  \widetilde{K}_a \leq K_a e^{-a\xi} \int_{-\infty}^{0} dz \, {  e^{\frac 4 5 a\xi} }   +  \widetilde{K}_a \leq \overline{K}
\end{align}
where we set $K_a =  \frac{8\pi a}{1-a}\, C_a \left({4 a^2 }\right)^{\frac 4 5} $
and $\widetilde{K}_a=\frac{8\pi a}{1-a} \bar{C}$ and the constant $\overline{K}>0$ depends on $a$ but it is independent on $\varepsilon$. Therefore, this yields \eqref{eq:estphiep3lem_pos}, that is
$$\vert \varphi(\xi) \vert \leq \overline{K} ,\quad \xi>0$$
for some $\overline{K}>0$ that depends on $a$ but it is independent on $\varepsilon$. This implies that $e^{\varphi(\xi)}$ remains bounded from below for $\xi>0$ and we do not have the vanishing of the exponential,  that is $e^{\varphi(\xi)}\not\to 0$ as $\varepsilon\to 0$ in the region where $\{\xi >0\}$. 
\end{proof}

\bigskip

\subsubsection{Taking the limit $\varepsilon\to 0$ in the regularized problem \eqref{eq:fixedpointPhi_reg}, \eqref{def:fixedpointTPhi_reg}} 
\label{ssec:limit}

We now show that we can take a subsequence $\{\varepsilon_k\}_{k=1}^\infty$ such that $\varepsilon_k\to 0$ as $k\to \infty$ and  such that $\varphi_{\varepsilon_k} \to \varphi$ where the limit $\varphi$ solves the fixed point problem $\varphi(\xi)= T[\varphi](\xi)$ (cf. \eqref{eq:fixedpointPhi}, \eqref{def:fixedpointTPhi}). 

It is easy to prove compactness, in compact sets, in the topology of the uniform convergence for  $\{\varphi_{\varepsilon_k}\}$ taking the derivative $\frac{d\varphi_\varepsilon}{d\xi}(\xi)$  in the integral equation \eqref{eq:fixedpointPhi_reg}, \eqref{def:fixedpointTPhi_reg}.  

Notice that we cannot have $\varphi_{\varepsilon_k} \to -\infty$ due to the estimate \eqref{eq:estphiep3lem_pos}. 
From Lemma \ref{lem:estimates} we have bounds for $\varphi_{\varepsilon_k}$ for positive and negative values of $\xi$.  
We consider first the second estimate \eqref{eq:estphiep3lem_neg2},  that is
\begin{equation*}
   |\varphi_{\varepsilon}(z)|  e^{|\varphi_{\varepsilon}(z)|} \geq    \frac{1}{a^2 }\frac{e^{-az}}{(1+\varepsilon e^{-z})} , \quad \text{for} \ \ z\leq 0\,, 
\end{equation*}
which implies 
\begin{equation}
 \label{eq:estphiep4bis2}
     e^{\varphi_{\varepsilon}(z)}
     \leq  \left({4 a^2 }\right)^{\frac 4 5} \left((1+\varepsilon e^{-z})  \right)^{\frac 4 5} e^{\frac 4 5 az}\,. 
\end{equation}
Moreover, notice that taking the limit $\varepsilon\to 0$ in 
\eqref{eq:estphiep4bis2} we obtain 
\begin{equation}
\label{est:limitphi}
   e^{\varphi(z)} 
     \leq  \left({4 a^2 }\right)^{\frac 4 5}  e^{\frac 4 5 az}\,.  
\end{equation}
which, for large values, implies a linear estimate for $\varphi$. 

By shortness we will write $\varepsilon=\varepsilon_k$ in what follows. We examine the limit of the integral  as $\varepsilon \to 0$ (or, equivalently, $k\to \infty$)
$$
\frac{8\pi a}{1-a}\int_{-\infty}^{\infty} dz \, \frac{e^{\varphi_{\varepsilon}(z)}}{1+\varepsilon e^{-z}}\left(\int_{\xi}^{\infty} 
\left(\frac{1}{|e^\eta-e^z|^{a}}+\frac{1}{|e^\eta+e^z|^{a}}\right) {d\eta}\right)\,.
$$
We split the integral in the regions where $\{z \leq 0\}$ and  $\{z > 0\}$ and consider first the contribution due to  $\{z \leq 0\}$ where we can use the estimate above. We have 
\begin{align}
& \frac{8\pi a}{1-a}\int_{-\infty}^{0} dz \, \frac{e^{\varphi_{\varepsilon}(z)}}{1+\varepsilon e^{-z}}\left(\int_{\xi}^{\infty} 
\left(\frac{1}{|e^\eta-e^z|^{a}}+\frac{1}{|e^\eta+e^z|^{a}}\right) {d\eta}\right)  \nonumber \\& 
\leq \frac{8\pi a}{1-a}\left({4 a^2 }\right)^{\frac 4 5} \int_{-\infty}^{0} dz \, \frac{\left((1+\varepsilon e^{-z})  \right)^{\frac 4 5} e^{\frac 4 5 az}}{1+\varepsilon e^{-z}}\left(\int_{\xi}^{\infty} 
\left(\frac{1}{|e^\eta-e^z|^{a}}+\frac{1}{|e^\eta+e^z|^{a}}\right) {d\eta}\right)  \nonumber \\&
\leq \frac{8\pi a}{1-a}\left({4 a^2 }\right)^{\frac 4 5} \int_{-\infty}^{0} dz \, \frac{  e^{\frac 4 5 az}}{(1+\varepsilon e^{-z})^{\frac 1 5} }\left(\int_{\xi}^{\infty} 
\left(\frac{1}{|e^\eta-e^z|^{a}}+\frac{1}{|e^\eta+e^z|^{a}}\right) {d\eta}\right)  
\nonumber \\&
\leq \frac{8\pi a}{1-a}\left({4 a^2 }\right)^{\frac 4 5} \int_{-\infty}^{0} dz \,   e^{\frac 4 5 az}\left(\int_{\xi}^{\infty} 
\left(\frac{1}{|e^\eta-e^z|^{a}}+\frac{1}{|e^\eta+e^z|^{a}}\right) {d\eta}\right) <  \infty
\end{align}
where in the last inequality we used that $(1+\varepsilon e^{-z})^{-\frac 1 5}  <1$. 

We now consider the contribution due to the region $\{z>0\}$. In this region we can use the fact that $\varphi_{\varepsilon} \leq 0$ whence $e^{\varphi_{\varepsilon}(z)} \leq 1$. We then have 
\begin{align}
&\frac{8\pi a}{1-a}\int_{0}^{\infty} dz \, \frac{e^{\varphi_{\varepsilon}(z)}}{1+\varepsilon e^{-z}}\left(\int_{\xi}^{\infty} 
\left(\frac{1}{|e^\eta-e^z|^{a}}+\frac{1}{|e^\eta+e^z|^{a}}\right) {d\eta}\right) \nonumber \\&
\leq \frac{8\pi a}{1-a}\int_{0}^{\infty} dz \, \left(\int_{\xi}^{\infty} 
\left(\frac{1}{|e^\eta-e^z|^{a}}+\frac{1}{|e^\eta+e^z|^{a}}\right) {d\eta}\right)< \infty\,.
\end{align} 
Notice that in order to check that the last integral is finite  we observe that for $|\eta -z| \leq 1$ the integrand can be estimated by $\frac {e^{-a\eta}}{|\eta -z|^a}$ which gives the desired exponential decay. 

We can then take, up to subsequences, the limit $\varepsilon \to 0$ using the Lebesgue's dominated convergence Theorem. This gives the existence of a solution $\varphi$ satisfying  
$\varphi(\xi)= T[\varphi](\xi)$ (cf. \eqref{eq:fixedpointPhi}, \eqref{def:fixedpointTPhi}). 
More precisely, we summarize here the result obtained in this Section. 
\begin{lem}\label{lem:existencelimit}
    There exists a solution $\varphi\in C(\mathbb{R})$ to $\varphi(\xi)= T[\varphi](\xi)$ (cf.~\eqref{eq:fixedpointPhi}, \eqref{def:fixedpointTPhi}) such that $-\infty<\varphi(\xi) < 0$, and 
\begin{align}\label{eq:estphi1lem_pos2}
        &\vert \varphi(\xi)\vert \leq C_1 \quad \text{ for} \ \  \xi \geq 0 ;  \\&
        \varphi(\xi) \leq -C_2 \vert \xi\vert \quad \text{ for} \ \ \xi \leq 0,\label{eq:estphi2lem_neg2}
    \end{align}
        with  constants $C_1, C_2>0$. 
Moreover, we have
\begin{equation}
 \label{eq:estphi3lem_neg2}
   |\varphi(\xi)|  e^{|\varphi(\xi)|} \geq    \frac{1}{a^2 }e^{a|\xi|} , \quad \text{for} \ \ \xi\leq 0\,. 
\end{equation} 
\end{lem}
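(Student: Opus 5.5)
The plan is to obtain $\varphi$ as a locally uniform limit of the regularized solutions $\varphi_{\varepsilon}\in Y_\varepsilon$ from Lemma \ref{lem:existencesol_reg}, using the $\varepsilon$-independent bounds of Lemma \ref{lem:estimates}. Those bounds then pass to the limit and give \eqref{eq:estphi1lem_pos2}--\eqref{eq:estphi3lem_neg2}.

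\textbf{Step 1: compactness.} Differentiating \eqref{def:fixedpointTPhi_reg} in $\xi$ gives
\[
\varphi_\varepsilon'(\xi)=\frac{8\pi a}{1-a}\int_{\mathbb{R}} dz\,\frac{e^{\varphi_\varepsilon(z)}}{1+\varepsilon e^{-z}}\Big(|e^\xi-e^z|^{-a}+|e^\xi+e^z|^{-a}\Big)\ge 0 .
\]
This shows each $\varphi_\varepsilon$ is nondecreasing, which is the monotonicity used in Lemma \ref{lem:estimates}. To bound $\varphi_\varepsilon'$ uniformly on compacts I split the $z$-integral:
\begin{itemize}
\item For $z\le 0$, use \eqref{eq:estphiep4bis2}. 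It gives $e^{\varphi_\varepsilon(z)}/(1+\varepsilon e^{-z})\le (4a^2)^{4/5}e^{\frac45 az}$.
\item For $z>0$, use $e^{\varphi_\varepsilon}\le 1$.
\end{itemize}
The singularity $|e^\xi-e^z|^{-a}$ is integrable since $a<1$. Hence $\sup_{\xi\in[-R,R]}|\varphi_\varepsilon'(\xi)|\le C_R$ independently of $\varepsilon$. Together with the two-sided bounds ($|\varphi_\varepsilon|\le\overline K$ for $\xi>0$, and monotonicity giving $|\varphi_\varepsilon(\xi)|\le |\varphi_\varepsilon(-R)|$ on $[-R,R]$), Arzel\`a--Ascoli and a diagonal argument give a subsequence $\varphi_{\varepsilon_k}\to\varphi$ locally uniformly. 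One needs an upper bound on $|\varphi_\varepsilon(-R)|$ uniform in $\varepsilon$. I expect this is the \emph{main obstacle}, since \eqref{eq:estphiep3lem_neg} is only a lower bound. I would get it from \eqref{eq:Phi_reg} directly: the estimate \eqref{eq:estPhi_reg2}, with the $C_a e^{-a\xi}$ factor replaced by an analogous bound valid for negative $\xi$, yields $-\varphi_\varepsilon(\xi)\le C(1+e^{-a\xi})$ for all $\xi$, uniformly in $\varepsilon$, because $\int_{-\infty}^0 e^{\frac45 az}dz<\infty$.

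\textbf{Step 2: passing to the limit in the equation.} For fixed $\xi$ the integrand of $T_{\varepsilon_k}[\varphi_{\varepsilon_k}](\xi)$ converges pointwise to that of $T[\varphi](\xi)$. It is dominated by an integrable function:
\begin{itemize}
\item on $\{z\le 0\}$, by $(4a^2)^{4/5}e^{\frac45 az}\int_\xi^\infty(\cdots)d\eta$, which behaves like $e^{\frac45 az}e^{-a\xi}$-type terms;
\item on $\{z>0\}$, by $\int_\xi^\infty(\cdots)d\eta$, which is $O(e^{-az}(1+z))$ after the substitution $u=\eta-z$.
\end{itemize}
Dominated convergence then yields $\varphi=T[\varphi]$. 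Continuity of $\varphi$ follows from the local uniform convergence.

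\textbf{Step 3: the estimates.} Each part is obtained from the corresponding bound for $\varphi_\varepsilon$.
\begin{itemize}
\item \eqref{eq:estphi1lem_pos2} follows from \eqref{eq:estphiep3lem_pos} in the limit.
\item \eqref{eq:estphi3lem_neg2} follows by letting $\varepsilon\to0$ in \eqref{eq:estphiep3lem_neg2}, since $1+\varepsilon e^{-\xi}\to1$ for each fixed $\xi$.
\item \eqref{eq:estphi2lem_neg2} follows from \eqref{eq:estphi3lem_neg2} exactly as in Section \ref{ssec:estimatephi}, using $xe^x\le e^{2x}$. This gives $|\varphi(\xi)|\ge \frac a2|\xi|+2\log(1/a)$, hence $\varphi(\xi)\le -\frac a2|\xi|$ for $\xi\le0$ (note $a<1$).
\item Strict negativity $\varphi<0$ holds because $T[\varphi](\xi)$ has a strictly positive integrand.
\item Finiteness $\varphi>-\infty$ holds because $T[\varphi](\xi)$ is finite by the dominating bound in Step 2.
\end{itemize}
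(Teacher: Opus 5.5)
Your proposal is correct and follows the paper's route: uniform local derivative bounds for Arzel\`a--Ascoli, domination on $\{z\le 0\}$ by \eqref{eq:estphiep4bis2} and on $\{z>0\}$ by $e^{\varphi_\varepsilon}\le 1$, and passage to the limit in the bounds of Lemma \ref{lem:estimates}. Two small remarks. First, the uniform bound on $|\varphi_\varepsilon(-R)|$ that you call the main obstacle follows immediately from your derivative bound together with $|\varphi_\varepsilon|\le\overline{K}$ on $(0,\infty)$, so your extra estimate is correct but unnecessary. Second, deriving \eqref{eq:estphi2lem_neg2} from \eqref{eq:estphi3lem_neg2} via $xe^x\le e^{2x}$ is cleaner than the paper's appeal to \eqref{est:limitphi}. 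That estimate only gives $\varphi(\xi)\le \frac45\log(4a^2)+\frac45 a\xi$, so it needs an extra argument near $\xi=0$ when $a>\frac12$.
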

Notice that \eqref{eq:estphi1lem_pos2} 
follows from the estimate \eqref{eq:estphiep3lem_pos} for $\varphi_\varepsilon$ that implies that a similar estimate holds for the limit function $\varphi$. On the other hand, the estimate \eqref{eq:estphi2lem_neg2} 
follows from \eqref{est:limitphi}. Moreover, \eqref{eq:estphi3lem_neg2} follows taking the limit $\varepsilon\to 0$ in \eqref{eq:estphiep3lem_neg2}. 
Using the change of variables \eqref{eq:chexpvar} as well as the symmetry of $\sigma$, i.e. 
$\sigma(y)=\sigma(-y)$, Lemma \ref{lem:existencelimit} allows to conclude the existence part of the proof of Theorem \ref{thm:sigma}.

\bigskip

\subsection{
Proof of Theorem \ref{thm:sigma}: Proof of the asymptotic behaviour of the solution to \eqref{eq:pde_b_stat}
} 

\label{ss:asymb}

We rewrite the problem 
\eqref{def:dampingPb2}, \eqref{def:tildeLambda}, with the boundary condition \eqref{eq:bdcdtsigma}, using a more convenient notation. 
More precisely, from \eqref{eq:integralLambda}, that is 
\begin{equation}
\label{eq:integralLambda2}
\int_{y}^{\infty}{\widetilde{\Lambda}}[\sigma](u)\frac{du}{u} =  \frac{8\pi a}{1-a} \int_{0}^{\infty}\frac{d\zeta}{\zeta}\sigma(\zeta)\int_{y}^{\infty} \left(\frac{1}{|u-\zeta|^{a}}+\frac{1}{|u+\zeta|^{a}}\right) \frac{du}{u}   
\end{equation}
we first consider the inner integral in the formula above and setting $u=\zeta z$, we obtain
\begin{equation}
\int_{y}^{\infty}\left(\frac{1}{|u-\zeta|^{a}}+\frac{1}{|u+\zeta|^{a}}\right)\frac{du}{u}
=: \frac 1 {\zeta^a} Q\left(\frac{y}{\zeta}\right)
\end{equation}
where we defined
\begin{equation}\label{def:QopT}
Q(s)=\int_{s}^{\infty}\left(\frac{1}{|z-1|^{a}}+\frac{1}{|z+1|^{a}}\right)\frac{dz}{z}  , \quad s \in \mathbb{R}\, 
\end{equation}
and thus \eqref{eq:integralLambda2} becomes 
\begin{equation}
\label{eq:integralLambda3} \int_{y}^{\infty}\Lambda[\sigma](u)\frac{du}{u}=\int_{0}^{\infty}\frac{\sigma(\zeta)}{\zeta^{1+a}}Q\left(\frac{y}{\zeta}\right) d\zeta\,. 
\end{equation}
We have 
\begin{equation}\label{eq:asymQs}
Q(s) \sim 2 \log\left(\frac 1 s\right) \text{ as } s \rightarrow 0, \qquad  Q(s) \sim  \frac {2}{a}\frac 1 {s^a}  \ \text{as } s\to\infty \,.
\end{equation} 
We observe also that $s\rightarrow Q(s)$ is decreasing. 

Using \eqref{def:QopT} we can rewrite \eqref{eq:solsigmasteady}, i.e. $\sigma(y)=\exp\left(-\int_{y}^{\infty}{\widetilde{\Lambda}}[\sigma](u)\frac{du}{u}\right)$, with $y>0$, as  
\begin{equation} \label{eq:MathU}
\sigma(y)=\exp\left(-\int_{0}^{\infty}\frac{\sigma(\zeta)}{\zeta^{1+a}} \, Q\left(\frac{y}{\zeta}\right)d\zeta\right):=\mathcal{U}[\sigma](y) \,.
\end{equation}
Notice that the existence of at least one solution $\sigma\in C((0,+\infty))$ to the equation above follows from the combination of the existence result in Section \ref{ssec:limit} (Lemma \ref{lem:existencelimit}) 
with the change of variables \eqref{eq:chexpvar}. 
We now prove the following properties of the solution $\sigma$. 
\begin{lem}\label{lem:asymptsigma}
There exists at least one solution  $\sigma \in C((0,+\infty))$ of \eqref{eq:MathU} such that the following properties hold:
\begin{itemize}
    \item[(i)] There exists $\lim_{y\to 0^{+}}\sigma(y)=:L\in \mathbb{R}$, $L<1$ and if we extend the function $\sigma$ adding the limit value in $0$, i.e. $\sigma(0)=L$, then  the resulting function $\sigma(\cdot)\in C([0,\infty))$; 
    \item[(ii)] $\sigma(0)=0$;
    \item[(iii)] $\lim_{y\to 0^+} \frac{\log(\sigma(y))}{\log(y)} =\beta \geq a$. 
\end{itemize} 
\end{lem}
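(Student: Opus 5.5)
The plan is to work with the solution $\sigma$ produced by Lemma \ref{lem:existencelimit} via \eqref{eq:chexpvar}, namely $\sigma(y)=e^{\varphi(\log y)}$, and to read everything off the representation \eqref{eq:MathU} together with the a priori bounds \eqref{eq:estphi1lem_pos2}--\eqref{eq:estphi3lem_neg2}.

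\textbf{Items (i) and (ii).} Since $Q$ is positive and decreasing, the exponent $E(y):=\int_0^\infty \sigma(\zeta)\zeta^{-1-a}Q(y/\zeta)\,d\zeta$ is nonincreasing in $y$. Hence $\sigma=\mathcal{U}[\sigma]$ is nondecreasing on $(0,\infty)$, and $L=\lim_{y\to0^+}\sigma(y)$ exists with $0\le L\le\sigma(1)<1$; here $\sigma(1)<1$ because the exponent is strictly positive. Extending $\sigma$ by $\sigma(0)=L$ gives a function in $C([0,\infty))$, which is (i). For (ii) there are two routes. The direct one is \eqref{eq:estphi2lem_neg2}: $\varphi(\xi)\le -C_2|\xi|$ for $\xi\le0$ gives $\sigma(y)\le y^{C_2}\to0$. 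The self-contained one argues by contradiction: if $L>0$, then $\sigma\ge L$, and by \eqref{eq:asymQs}, $E(y)\ge L\int_y^\infty \zeta^{-1-a}Q(y/\zeta)\,d\zeta = L\,y^{-a}\int_0^1 s^{a-1}Q(s)\,ds$, which tends to $\infty$ as $y\to0$. That forces $\sigma\to0$, a contradiction.

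\textbf{Item (iii).} The quantity to study is $-\log\sigma(y)=E(y)$. Split $E=E_1+E_2$, with $E_1$ the integral over $\zeta\le y$ and $E_2$ the integral over $\zeta>y$.

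For $\zeta>y$ we have $Q(y/\zeta)\approx 2\log(\zeta/y)$ when $\zeta\gg y$. This gives
\[
E_2(y)=2\log(1/y)\,\mathcal{M}+O(1)+\text{(boundary terms)},\qquad \mathcal{M}:=\int_0^\infty \sigma(\zeta)\zeta^{-1-a}\,d\zeta\in(0,\infty].
\]
For $\zeta<y$ we have $Q(y/\zeta)\approx \tfrac2a(\zeta/y)^a$, so $E_1(y)\approx \tfrac{2}{a}y^{-a}\int_0^y\sigma(\zeta)\zeta^{-1}\,d\zeta$.

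The natural candidate is therefore
\[
\beta=2\mathcal{M}\cdot\frac{8\pi a}{1-a},
\]
with the prefactor absorbed as in \eqref{eq:integralLambda3}. This uses $\log\sigma(y)/\log y = E(y)/\log(1/y)$. I would then prove $\lim E(y)/\log(1/y)=\beta$ in two cases.

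\emph{Case $\mathcal{M}<\infty$.} Dominated convergence applied to $E_2(y)/\log(1/y)$ (using $\log(\zeta/y)/\log(1/y)\to1$ for fixed $\zeta$, plus monotonicity of $\sigma$) gives the limit $\beta$. For $E_1$, the bound $\sigma(\zeta)\le\sigma(y)$ gives $E_1(y)\lesssim y^{-a}\int_0^y \sigma(\zeta)\zeta^{-1}\,d\zeta$, and this is $o(\log(1/y))$ once we know $\sigma(\zeta)\le C\zeta^{a}$. That bound in turn follows from the crude bootstrap $\sigma(y)\le y^{C_2}$ iterated through $E_2\ge 2\mathcal{M}_y\log(1/y)$.

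\emph{Case $\mathcal{M}=\infty$.} Then $E_2(y)/\log(1/y)\to\infty$, so $\beta=\infty$, and this is compatible with the claim $\beta\ge a$.

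It remains to show $\beta\ge a$ when $\beta$ is finite. Suppose instead $\beta<a$. Then $\sigma(\zeta)\ge \zeta^{\beta+\delta}$ near $0$ with $\beta+\delta<a$, so $\mathcal{M}\ge\int_0 \zeta^{\beta+\delta-1-a}\,d\zeta=\infty$, contradicting finiteness. This is exactly the heuristic from Section \ref{sec:asymG}. The same integral also shows that a finite $\beta$ requires $\beta>a$, or $\beta=a$ with logarithmic corrections.

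\textbf{Main obstacle.} The hard step is the rigorous limit for $E_2/\log(1/y)$, because it needs uniform control of $Q(s)-2\log(1/s)$ near $s=1$ (where there is a singularity $|z-1|^{-a}$, which is integrable) and of the remainder at large $\zeta$. I would handle this by writing $Q(s)=2\log^+(1/s)+R(s)$ with $R$ bounded on $(0,\infty)$ and $R(s)\lesssim s^{-a}$ at infinity, so that $\int \sigma\zeta^{-1-a}R(y/\zeta)\,d\zeta$ splits into a bounded part plus the $E_1$-type term already estimated.
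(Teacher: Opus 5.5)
Your proposal has a genuine gap in item (iii). You never prove that $\mathcal{M}=\int_0^\infty\sigma(\zeta)\zeta^{-1-a}\,d\zeta$ is finite. Instead you admit the case $\mathcal{M}=\infty$ and call the outcome $\beta=\infty$ ``compatible with $\beta\ge a$''.

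The lemma asserts a finite power-law exponent. It is used as $\beta\in\mathbb{R}$ in Lemma \ref{lem:asymOmega0}, and the paper identifies $\beta=2\int_0^\infty\sigma\zeta^{-1-a}\,d\zeta$. So this case must be ruled out, not accepted. Your argument for $\beta\ge a$ also presupposes $\mathcal{M}<\infty$, so the step is needed twice.

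The missing step is short, and it is the first thing the paper proves. Suppose $\mathcal{M}=\infty$. Then, as you observe, $E(y)/\log(1/y)\to\infty$. Hence for every $M$ there is $r_0(M)>0$ with $\sigma(y)=e^{-E(y)}\le y^{M}$ for $0<y\le r_0$. Take $M>a$ and use $\sigma\le 1$ on $[r_0,\infty)$; this gives $\mathcal{M}\le\int_0^{r_0}\zeta^{M-1-a}\,d\zeta+\int_{r_0}^\infty\zeta^{-1-a}\,d\zeta<\infty$, a contradiction. So the case $\mathcal{M}=\infty$ is empty and $\beta$ is finite.

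Your handling of $E_1$ has a smaller problem. You control it through $\sigma(\zeta)\le C\zeta^{a}$, obtained by bootstrapping. That bootstrap only yields $\sigma(y)\le y^{\beta-\epsilon}$, which gives $C\zeta^a$ only if $\beta>a$ — the very thing you prove afterwards, so the argument is circular. It is also unnecessary. For $\zeta\le y$ you have $y/\zeta\ge 1$, and $Q$ is decreasing, so
\[
E_1(y)\le Q(1)\int_0^y\sigma(\zeta)\zeta^{-1-a}\,d\zeta\longrightarrow 0 \quad\text{once }\mathcal{M}<\infty.
\]
This is the paper's $J_1$ estimate. If you want the pointwise bound anyway, monotonicity gives it directly: $\sigma(y)\int_y^{2y}\zeta^{-1-a}\,d\zeta\le\int_y^{2y}\sigma\zeta^{-1-a}\,d\zeta\to0$, so $\sigma(y)=o(y^a)$.

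With $\mathcal{M}<\infty$ established, the rest of your argument is sound and departs from the paper in two legitimate ways. First, you obtain (i) from the monotonicity of $\sigma=\mathcal{U}[\sigma]$, and you give a self-contained proof of (ii), rather than reading both off $\varphi(\xi)\le -C_2|\xi|$ in \eqref{eq:estphi2lem_neg2}. Second, you deduce $\beta\ge a$ from the divergence of $\int_0\zeta^{\beta+\delta-1-a}\,d\zeta$ when $\beta<a$, instead of from the a priori estimate \eqref{eq:estphi3lem_neg2}. That route is more elementary: it uses only the identification $\beta=2\mathcal{M}<\infty$ and does not depend on the regularized construction behind Lemma \ref{lem:existencelimit}.
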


\begin{proof}
We start proving the properties $(i)-(ii)$. They immediately follow from the fact that $\sigma=e^\varphi$, $y=e^\xi,$ (cf. \eqref{eq:chexpvar}) and 
from the estimate $\varphi(\xi) \leq -C_2 \vert \xi\vert$ for $\xi \leq 0$, with $C_2>0$ (cf. Lemma \ref{lem:existencelimit}). Moreover, this estimate implies that $\sigma(y) \leq y^{C_2}$ for $0\leq y\leq 1$.

\medskip

Our goal is now to prove $(iii)$.   In order to do that, we begin proving that  
\begin{equation}\label{eq:sigma_integrab}
  \int_0^\infty \frac{\sigma(\zeta)}{\zeta^{1+a}} \,d\zeta <  +\infty.   
\end{equation}
Since  $\int_0^\infty \frac{\sigma(\zeta)}{\zeta^{1+a}} \,d\zeta= \int_0^1 \frac{\sigma(\zeta)}{\zeta^{1+a}} \,d\zeta +\int_1^\infty \frac{\sigma(\zeta)}{\zeta^{1+a}} \,d\zeta$ and $0\leq \sigma(\zeta) \leq 1$ in the region where $\zeta >1$, it follows that $\int_1^\infty \frac{\sigma(\zeta)}{\zeta^{1+a}} \,d\zeta <  +\infty$. Therefore, it is enough to analyse the integrability of $\int_0^1 \frac{\sigma(\zeta)}{\zeta^{1+a}} \,d\zeta$.  In order to do this we argue by contradiction. Suppose that 
\begin{equation}
    \label{eq:contrass}
\int_{0}^{1} \frac{\sigma(\zeta)}{\zeta^{1+a}} d\zeta = \infty. 
\end{equation}
Then
\begin{equation}
\label{eq:est_int1}\int_{0}^{\infty} \frac{\sigma(\zeta)}{\zeta^{1+a}} Q\left(\frac{y}{\zeta}\right)\, d\zeta \geq \int_{0}^{1} \frac{\sigma(\zeta)}{\zeta^{1+a}} Q\left(\frac{y}{\zeta}\right)\, d\zeta\,.
\end{equation}
Suppose that we fix an arbitrary $\delta > 0$. Using Lebesgue's dominated convergence Theorem we have 
$$\lim_{y\to 0^{+}}\int_{\delta}^{1} \frac{\sigma(\zeta)}{\zeta^{1+a}} \frac{Q\left(\frac{y}{\zeta}\right)}{\log(\frac{1}{y})} d\zeta = 2 \int_{\delta}^{1} \frac{\sigma(\zeta)}{\zeta^{1+a}} d\zeta$$ 
from which it follows:
\begin{equation}\label{eq:est_int2}
    \liminf_{y \to 0^+} \frac{1}{\log(\frac{1}{y})} \int_{0}^{\infty} \frac{\sigma(\zeta)}{\zeta^{1+a}} Q\left(\frac{y}{\zeta}\right) d\zeta \ge \lim_{y \to 0^+} \frac{1}{\log(\frac{1}{y})} \int_{\delta}^{1} \frac{\sigma(\zeta)}{\zeta^{1+a}} Q\left(\frac{y}{\zeta}\right) d\zeta = 2 \int_{\delta}^{1} \frac{\sigma(\zeta)}{\zeta^{1+a}} d\zeta\,.
\end{equation}
Choosing $M$ arbitrarily large we obtain that, for $\delta>0$ small, 
\begin{equation}  \label{eq:est_int3}
2 \int_{\delta}^{1} \frac{\sigma(\zeta)}{\zeta^{1+a}} d\zeta \ge 2M .
\end{equation}
From \eqref{eq:est_int1}, using \eqref{eq:est_int2} and \eqref{eq:est_int3}, we arrive at
\begin{equation}
\int_{0}^{\infty} \frac{\sigma(\zeta)}{\zeta^{1+a}} Q\left(\frac{y}{\zeta}\right) ds \ge M \log\left(\frac{1}{y}\right)
\end{equation}
which holds true for $0\leq y \leq r_0$ with $r_0>0$ small. 
Then 
\begin{equation}
\label{eq:estsigma(y)}    
\sigma(y)= \exp\left(-\int_{0}^{\infty} \frac{\sigma(\zeta)}{\zeta^{1+a}} Q\left(\frac{y}{\zeta}\right) d\zeta\right)  \le \exp(-M \log(1/y)) \leq  y^M,
\end{equation} 
for $0\leq y \leq r_0$.  
This implies that $\int_{0}^{1} \frac{\sigma(\zeta)}{\zeta ^{1+a}} d\zeta < \infty $ if $M$ is chosen large enough, contradicting the assumption \eqref{eq:contrass}.  We have thus proved \eqref{eq:sigma_integrab}. 
\bigskip 

We compute the asymptotic behaviour of $\sigma(y)$ as $y\to 0^+$, where $\sigma(y)$ is given by
\begin{equation}
    \label{def:sigmay_A}
    \sigma(y)=\exp\left(-\int_{0}^{\infty}\frac{\sigma(\zeta)}{\zeta^{1+a}}Q\left(\frac{y}{\zeta}\right) d\zeta\right)\,.
\end{equation} 
As a next step we will derive the following asymptotics: 
\begin{align}\label{eq:proofasym}
& \int_0^\infty 
\frac{\sigma(\zeta)}{\zeta^{1+a}}Q\left(\frac{y}{\zeta}\right) \,d\zeta \sim -\beta \log(y) \qquad \text{as} \ \ y\to 0^+
\nonumber 
\\&
\beta := 2  \int_0^\infty 
\frac{\sigma(\zeta)}{\zeta^{1+a}} \, d\zeta >0\,.
\end{align}
To this end we notice that 
\begin{equation}\label{eq:proofasym2}
    \int_0^\infty 
\frac{\sigma(\zeta)}{\zeta^{1+a}}Q\left(\frac{y}{\zeta}\right) \,d\zeta = \int_0^{My}
\frac{\sigma(\zeta)}{\zeta^{1+a}}Q\left(\frac{y}{\zeta}\right) \,d\zeta+\int_{My}^\infty 
\frac{\sigma(\zeta)}{\zeta^{1+a}}Q\left(\frac{y}{\zeta}\right) \,d\zeta =: J_1+J_2\,,
\end{equation}
where $M\geq 1$ is a large number to be determined. 

We first consider $J_1$: In the region where $\zeta \leq My$ we have that $Q\left(\frac{y}{\zeta}\right) \leq K_{M}$ where $K_M>0$ is a constant depending on $M\geq 1$. Therefore, 
\begin{equation}
    \label{eq:proof_estJ1}
J_1=\int_0^{My}
\frac{\sigma(\zeta)}{\zeta^{1+a}}Q\left(\frac{y}{\zeta}\right) \,d\zeta \leq K_M \int_0^{My}
\frac{\sigma(\zeta)}{\zeta^{1+a}}
\,d\zeta \end{equation}
and due to integrability \eqref{eq:sigma_integrab},  the r.h.s. of \eqref{eq:proof_estJ1} converges to $0$ as $y\to 0^+$. 

We now estimate $J_2$. Using the asymptotics for $Q(s)$ as $s\to 0$ (cf. \eqref{eq:asymQs}) we obtain
\begin{equation}\label{eq:proof_estJ2}
  (2-\varepsilon)\int_{My}^\infty 
\frac{\sigma(\zeta)}{\zeta^{1+a}}\log\left(\frac{y}{\zeta} \right) \,d\zeta   \leq J_2 \leq (2+\varepsilon)\int_{My}^\infty 
\frac{\sigma(\zeta)}{\zeta^{1+a}}\log\left(\frac{y}{\zeta} \right)   \,d\zeta , \quad \forall \varepsilon>0. 
\end{equation}
Therefore, we only need to study the asymptotics of the integral 
$\int_{My}^\infty 
\frac{\sigma(\zeta)}{\zeta^{1+a}}\log\left(\frac{y}{\zeta} \right) \,d\zeta \to 0$. 
To this aim we consider 
\begin{align}\label{eq:intJ1J2}
    \int_{My}^\infty 
\frac{\sigma(\zeta)}{\zeta^{1+a}}\log\left(\frac{y}{\zeta} \right) \,d\zeta &= -\log\left( y \right) \int_{My}^\infty 
\frac{\sigma(\zeta)}{\zeta^{1+a}}\,d\zeta
+\int_{My}^\infty 
\frac{\sigma(\zeta)}{\zeta^{1+a}}\log\left({\zeta} \right) \,d\zeta =: \tilde{J}_1+\tilde{J}_2\,.
\end{align}
Notice that, using \eqref{eq:sigma_integrab}, for the first term on the r.h.s. we have $$\tilde{J}_1=\log(y) \int_{My}^\infty 
\frac{\sigma(\zeta)}{\zeta^{1+a}}\,d\zeta \rightarrow \log(y) \int_{0}^\infty 
\frac{\sigma(\zeta)}{\zeta^{1+a}}\,d\zeta \quad  \text{as} \ \  y\to 0^+.$$ 
Regarding the second term on  the r.h.s. instead, we can split the integral  in two regions, specifically
\begin{align}
 \tilde{J}_2=\int_{My}^\infty 
\frac{\sigma(\zeta)}{\zeta^{1+a}}\log\left({\zeta} \right) \,d\zeta = \int_{My}^\delta  
\frac{\sigma(\zeta)}{\zeta^{1+a}}\log\left({\zeta} \right) \,d\zeta +\int_{\delta}^\infty 
\frac{\sigma(\zeta)}{\zeta^{1+a}}\log\left({\zeta} \right) \,d\zeta  =:\tilde{J}_{2,1}+\tilde{J}_{2,2}
\end{align}
with $\delta >0$. 
We observe that $J_{2,2}$ is bounded by a constant that depends on $\delta$, namely $\tilde{J}_{2,2} \leq K_\delta$ and that 
\begin{equation} \label{eq:proof_estJ2tilde}
\vert \tilde{J}_{2,1} \vert =\left\vert \int_{My}^\delta  
\frac{\sigma(\zeta)}{\zeta^{1+a}}\log\left({\zeta} \right) \,d\zeta     \right \vert \leq \vert \log(My) \vert \int_{My}^\delta  
\frac{\sigma(\zeta)}{\zeta^{1+a}} \leq  \eta_\delta \vert \log(My) \vert
 \end{equation}
where $\eta_\delta \to 0$ as $\delta \to 0$. 

Then, using \eqref{eq:proof_estJ2}, and defining $\beta:= 2\int_{0}^\infty 
\frac{\sigma(\zeta)}{\zeta^{1+a}} \,d\zeta$ we have
\begin{equation} \label{eq:proof_estJ2_bis}
\beta- O(\eta_{\delta})-O(\varepsilon) \leq \liminf_{y \to 0^+} \frac{J_2}{\left\vert \log\left( y\right) \right\vert } \leq \limsup_{y \to 0^+} \frac{J_2}{\left\vert \log\left(y\right) \right\vert } \leq \beta+O(\eta_{\delta})+O(\varepsilon)\,.  
\end{equation} 
From \eqref{eq:proofasym2}, using \eqref{eq:proof_estJ1} and taking the limits $\delta \to 0$ and $\varepsilon\to 0$  in \eqref{eq:proof_estJ2_bis} it then implies  \eqref{eq:proofasym}.

We now take the  $\log(\cdot)$ of both sides of   \eqref{def:sigmay_A} and dividing by $\log\left( y\right)$, using \eqref{eq:proofasym} and taking the limit $y\to 0^+$, we obtain
$$\lim_{y\to 0^+} \frac{\log(\sigma(y))}{\log(y)}=\beta\,.$$

We further observe that, from \eqref{eq:estphi3lem_neg2} in Lemma \ref{lem:existencelimit}, using the change of variables \eqref{eq:chexpvar} and taking the limit $y \to 0^+$ we obtain that $\beta \geq a$.   This concludes the proof of item $(iii)$. 

\end{proof}

\subsection{End of the Proof of Theorem \ref{thm:sigma}}

\begin{proof}
    The existence of at least one solution to the problem \eqref{def:dampingPb}-\eqref{eq:statLambda_bis} with the properties stated in Theorem \ref{thm:sigma} follows from Lemma \ref{lem:existencelimit} for what concerns the existence of a solution $\sigma$ and from Lemma \ref{lem:asymptsigma} for the asymptotic behaviour. We now show that any non-negative solution to \eqref{def:dampingPb}-\eqref{eq:statLambda_bis} satisfies the properties  stated in Theorem \ref{thm:sigma}, namely that $0<\sigma(y)\leq 1$ for $y\in \mathbb{R}$ and the asymptotics \eqref{eq:thmasym}. Indeed, it follows from  \eqref{def:dampingPb} that the solution $\sigma$ is increasing and if $\sigma(y)=0$ for $y\in I\subset \mathbb{R}_+$, where $I$ is any finite interval or a singleton, then it would imply $\sigma(y)=0$ for any 
$y\in \mathbb{R}_{+}$ and this would violate the boundary condition $\sigma(y)\to 1 $ as $y \to \infty$. 

In order to prove  \eqref{eq:thmasym}, that is 
$$\lim_{y\to 0^+} \frac{\log(\sigma(y))}{\log(y)}=\beta  \quad \text{with} \ \ \beta \geq a\,,$$
we remark that the heuristic argument in Section \ref{ssec:estimatephi} providing the a priori estimate \eqref{est:phi} is now rigorous if $\varphi$ is defined by means of \eqref{eq:chexpvar}.  Moreover, it holds $\vert \varphi(\xi)\vert \leq C$, $\xi \geq 0$ for any solution $\varphi$. 
Arguing then as in Section \ref{ssec:der_unifest} with $\varepsilon=0$ we can prove \eqref{eq:estphi3lem_neg2}. Therefore, we can argue as in the proof of the  Lemma \ref{lem:asymptsigma} and then deduce the asymptotics \eqref{eq:thmasym}. 

\end{proof}
 
\subsection{Proof of the identity \eqref{eq:lamHbar8}} \label{ss:cutoff3}
 
In this Section we prove the following result, that has been used in Section \ref{sec:asymG}. 
\begin{prop}\label{prop:intLam}
Let be $0<a<1$ and suppose that $\sigma=\sigma(y)$ solves the problem \eqref{def:dampingPb} with $\sigma(0)=0$. Then the following identity holds:
\begin{equation}
    4\pi \int_{-\infty}^{\infty} dy \int_{-\infty}^{\infty} d\zeta \, \frac{1}{|{y-\zeta}|^{a}}     \,  { \frac{{\sigma}\left( y \right)}{|y|}\,  \frac{{\sigma}\left( \zeta \right)}{|\zeta|} } = \frac{(1-a)}{a}
\label{eq:lamHbar8bis}
\end{equation}\end{prop}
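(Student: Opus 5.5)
The natural route is to read the double integral as the integral of $\Lambda[\sigma]$ against the measure $\sigma(y)dy/|y|$, and then use the equation \eqref{def:dampingPb} to turn it into an exact derivative.

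\textbf{Step 1: rewrite the integral.} By \eqref{eq:statLambda_bis},
\[
4\pi\int_{\mathbb{R}}\int_{\mathbb{R}} \frac{1}{|y-\zeta|^a}\frac{\sigma(y)}{|y|}\frac{\sigma(\zeta)}{|\zeta|}\,d\zeta\,dy
=\frac12\int_{\mathbb{R}}\Lambda[\sigma](y)\,\frac{\sigma(y)}{|y|}\,dy
=\int_0^\infty \Lambda[\sigma](y)\,\frac{\sigma(y)}{y}\,dy ,
\]
where the last equality uses that $\sigma$ is even, so $\Lambda[\sigma]$ is even too.

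\textbf{Step 2: use the equation.} By \eqref{def:dampingPb}, $\Lambda[\sigma]\sigma=\left(\frac1a-1\right)y\sigma'(y)$. Substituting, the integral becomes
\[
\left(\frac1a-1\right)\int_0^\infty \sigma'(y)\,dy=\frac{1-a}{a}\Bigl(\lim_{y\to\infty}\sigma(y)-\sigma(0^+)\Bigr).
\]
The boundary condition \eqref{eq:bdcdtsigma} gives $\sigma(\infty)=1$, and the hypothesis gives $\sigma(0)=0$ (cf. Lemma \ref{lem:asymptsigma}(ii)). So the value is $\frac{1-a}{a}$, which is \eqref{eq:lamHbar8bis}.

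\textbf{Step 3: justification (the only delicate point).} We need the double integral to be absolutely convergent, so that Tonelli lets us integrate in $\zeta$ first and produce $\Lambda[\sigma](y)$. We also need $\sigma$ to be locally absolutely continuous on $(0,\infty)$, with $\int_0^\infty\sigma'\,dy$ equal to the limit of $\sigma(R)-\sigma(\delta)$ as $\delta\to0$, $R\to\infty$.

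For the second point: since everything is nonnegative and $\sigma'\ge0$ (because $\Lambda\ge0$), we can compute on $[\delta,R]$, where $\sigma$ is $C^1$ by \eqref{eq:solsigmasteady}. Then let $\delta\to0$, $R\to\infty$ by monotone convergence. This needs no a priori finiteness: the limit is automatically $(1-a)/a$, hence finite.

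For the first point, Tonelli applies to nonnegative integrands regardless of finiteness, so the identity in Step 1 holds in $[0,\infty]$. This is also consistent with the bounds already available: $\sigma(y)\le y^{\beta-\epsilon}$ near $0$ with $\beta\ge a>0$ (Theorem \ref{thm:sigma}), and $\sigma\le1$ combined with the decay of $|y-\zeta|^{-a}/|\zeta|$ at infinity.

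So the argument should be short and essentially rigorous. The main care is in confirming that $\sigma(0^+)=0$ is a genuine limit, which monotonicity of $\sigma$ guarantees.
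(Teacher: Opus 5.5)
Your proof is correct and follows the paper's argument: use the evenness of $\sigma$ to reduce to $y>0$, recognize the inner $\zeta$-integral times $\sigma(y)/y$ as $\left(\frac{1}{a}-1\right)\sigma'(y)$ via the equation, and integrate using $\sigma(0)=0$ and $\sigma(\infty)=1$. Your added justification (Tonelli for the nonnegative integrand, and integrating on $[\delta,R]$ with monotone convergence, which yields finiteness automatically) makes rigorous what the paper leaves implicit.
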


\begin{proof}

By assumption we have that $\sigma$ is a solution to \eqref{def:dampingPb}, namely 
\begin{equation}\label{def:dampingPb_bis}
\left(\frac{1}{a}-1\right)y\frac{\partial \sigma}{\partial y}={\Lambda}[\sigma](y) \sigma, \qquad y>0  
\end{equation}
where 
\begin{equation*}
{\Lambda}[\sigma](y)=8\pi\int_{\mathbb{R}} d\zeta \, \frac 1 {|y-\zeta|^{a}}\frac{\sigma(\zeta)}{|\zeta|}
\end{equation*}

We first recall that $\sigma$ is a symmetric function, i.e. $\sigma(y)=\sigma(-y)$.   We consider now the integral on the l.h.s. of \eqref{eq:lamHbar8bis} and, using the symmetry of $\sigma$ we can rewrite it  as 
\begin{equation}
    \label{eq:intLam}
4\pi \int_{-\infty}^{\infty} dy \int_{-\infty}^{\infty} d\zeta \, \frac{1}{|{y-\zeta}|^{a}}     \,  { \frac{{\sigma}\left( y \right)}{|y|}\,  \frac{{\sigma}\left( \zeta \right)}{|\zeta|} } = 8\pi \int_{0}^{\infty} dy \int_{-\infty}^{\infty} d\zeta \, \frac{1}{|{y-\zeta}|^{a}}     \,  { \frac{{\sigma}\left( y \right)}{y}\,  \frac{{\sigma}\left( \zeta \right)}{|\zeta|} }
\end{equation}

Therefore, since the equation is satisfied for $y>0$ we can equivalently rewrite \eqref{def:dampingPb_bis} as 
\begin{equation*}
\left(\frac{1}{a}-1\right)\frac{\partial \sigma}{\partial y}=8\pi\int_{\mathbb{R}} d\zeta \, \frac 1 {|y-\zeta|^{a}}\frac{\sigma(\zeta)}{|\zeta|} \frac{\sigma(y)}{y}, \qquad y>0  
\end{equation*} 
We now integrate in $y$ the equation above. Using the symmetry of $\sigma$ and the fact that $\sigma(0)=0$ and $\sigma(\infty)=1$, we thus obtain 
\begin{equation} \label{eq:intLam2}
\left(\frac{1}{a}-1\right) =8\pi \int_{0}^\infty  dy \,\int_{\mathbb{R}} d\zeta  \frac 1 {|y-\zeta|^{a}}\frac{\sigma(\zeta)}{|\zeta|} \frac{\sigma(y)}{y}.  
\end{equation} 
Combining \eqref{eq:intLam2} with \eqref{eq:intLam} we then prove \eqref{eq:lamHbar8bis}. 
 
\end{proof}


\newpage 
\appendix
\section{Appendix A}
\subsection{Conservation of mass property for the equation satisfied by the distribution $F$}
 \label{ssec:massF}

We consider equation \eqref{eq:ss_strong2}  satisfied by the rescaled function $F$, that is,
\begin{equation*}
    \partial_{\tau}F-\frac{1}{a}\partial_{\xi_1}(\xi_1F)-\left(\frac{1}{a}-1\right)\partial_{\xi_2}(\xi_2F)-\left(\frac{1}{a}-1\right)\partial_{\xi_3}(\xi_3F)
   +  \partial_{\xi_1}(\xi_2 F )=Q^+[F,F]-Q^-[F,F] 
\end{equation*}
where
\begin{align*}
 & {Q}^+[F,F]({\xi})=  \frac {32}{2^a} \,t^{a-1}\,\delta(\xi_1)\,\int_{\R} dx \int_{\mathbb{R}^2}d\tilde{y}\, \int_0^{\pi} d\theta \,F\left( x_1+  |\tilde \xi|\left(t \sin(2\theta)\right)^{-1}, x_2+y_2,x_3+y_3, \tau\right) \times \notag \\& \qquad \qquad \qquad \qquad \qquad \times F\left(x_1- |\tilde \xi|\left(t \sin(2\theta)\right)^{-1},x_2-y_2,x_3-y_3,\tau \right) \frac{\sin\theta \, \vert \sin(2\theta)\vert^{a-1} }{  |\tilde \xi| ^{a+1}}    
 \end{align*}
 \begin{equation*}
  {Q}^-[F,F](\xi)=2F(\xi,\tau) \int_{S^2} d\omega\int_{\R}d\xi_{\ast} \,|\xi_1-\xi_{1,\ast}|^{-a} F(\xi_{\ast},\tau)\, 
\end{equation*} 
and prove that this equation satisfies the conservation of mass property.  
In order to do this it is enough to check the  integral of the collision operator. By definition of the collision integral, indeed, the net number of particles must be conserved, i.e.  $\int_{\R} d\xi \, Q[F,F](\xi,\tau) =0$. We start considering the contribution of the loss term \eqref{eq:ss_strong2_loss}. We have 
\begin{equation}\label{eq:MassF_loss}
\int_{\R}d\xi  \, Q^-[F,F](\xi,\tau)  = 8\pi \int_{\R} d\xi \int_{\R} d\xi_{\ast}\, {|\xi_1-\xi_{1,\ast}|^{-a}} \, {F(\xi,\tau)F(\xi_{\ast},\tau)}\, ,
\end{equation}
using that $\int_{S^2} d\omega=4\pi$. 
On the other hand, the contribution of the gain term \eqref{eq:ss_strong2_gain} gives
\begin{align}
\label{eq:MassF_gain}
&\int_{\R}d\xi \,  Q^+[F,F](\xi,\tau)  \notag \\& 
=  \frac {32}{2^a} t^{a-1} \int_{\R}d\xi \, \delta(\xi_1)\int_{\R} dx \int_{\mathbb{R}^2}d\tilde{y}\, \int_0^{\pi} d\theta  \,F\left( x_1+  |\tilde \xi|\left(t \sin(2\theta)\right)^{-1}, x_2+y_2,x_3+y_3, \tau\right) \times \notag \\& \qquad \qquad \qquad \qquad \qquad \times F\left(x_1-|\tilde \xi|\left(t \sin(2\theta)\right)^{-1},x_2-y_2,x_3-y_3,\tau \right) \frac{  \sin\theta \, \vert \sin(2\theta)\vert^{a-1} }{  |\tilde \xi| ^{a+1}} \nonumber\\& 
=  \frac {32}{2^a} t^{a-1} \int_{\mathbb{R}^2}d\tilde{\xi} \, \int_{\R} dx \int_{\mathbb{R}^2}d\tilde{y}\, \int_0^{\pi} d\theta  \,F\left( x_1+  |\tilde \xi|\left(t \sin(2\theta)\right)^{-1}, x_2+y_2,x_3+y_3, \tau\right) \times \notag \\& \qquad \qquad \qquad \qquad \qquad \times F\left(x_1-|\tilde \xi|\left(t \sin(2\theta)\right)^{-1},x_2-y_2,x_3-y_3,\tau \right) \frac{  \sin\theta \, \vert \sin(2\theta)\vert^{a-1} }{  |\tilde \xi| ^{a+1}}\,.
\end{align}
We now change variables and set $x_2+y_2=\eta_2,\, x_3+y_3=\eta_3$, $x_2-y_2=\zeta_2,\, x_3-y_3=\zeta_3$ with Jacobian $dx_2dy_2=   \frac 1 2 d\eta_2d\zeta_2$, $dx_3dy_3=   \frac 1 2 d\eta_3 d\zeta_3$ respectively, so that \eqref{eq:MassF_gain} becomes
\begin{align}
\label{eq:MassF_gain2}
&\int_{\R}d\xi \,  Q^+[F,F](\xi,\tau)  \notag \\&
=  \frac {32}{2^a} t^{a-1}  \frac 1 4 \int_{\mathbb{R}^2}d\tilde{\xi} \, \int_{\mathbb{R}} dx_1 \int_{\mathbb{R}}d\eta_2  \int_{\mathbb{R}} d\eta_3  \int_{\mathbb{R}} d\zeta_2 \int_{\mathbb{R}} d\zeta_3 \int_0^{\pi} d\theta  \,F\left( x_1+  |\tilde \xi|\left(t \sin(2\theta)\right)^{-1},\eta_2,\eta_3, \tau\right) \times \notag \\& \qquad \qquad \qquad \qquad \qquad \times F\left(x_1-|\tilde \xi|\left(t \sin(2\theta)\right)^{-1},\zeta_2,\zeta_3,\tau \right) \frac{  \sin\theta \, \vert \sin(2\theta)\vert^{a-1} }{  |\tilde \xi| ^{a+1}}\end{align}
where in the second identity we used the change of variable $z=\frac{|\tilde \xi|}{t \sin(2\theta)}$. However, the function $\frac{|\tilde \xi|}{t \sin(2\theta)}$ is multivalued in $\theta \in [0,\pi]$. 
We introduce the following auxiliary function lighten the notation and the computations. We define 
\begin{equation}\label{eq:defL_app}
L(x_1, \tau):=\int_{\mathbb{R}}d\eta_2  \int_{\mathbb{R}} d\eta_3  F\left( x_1,\eta_2,\eta_3, \tau\right) \,.
\end{equation}
Using \eqref{eq:defL_app} in \eqref{eq:MassF_gain2} we arrive at 
\begin{align}
\label{eq:MassF_gain3}
&\int_{\R}d\xi \,  Q^+[F,F](\xi,\tau)  \notag \\&
=  \frac {8}{2^a} t^{a-1} \int_{\mathbb{R}^2}d\tilde{\xi} \, \int_{\mathbb{R}} dx_1   \int_0^{\pi} d\theta  \,L\left( x_1+  |\tilde \xi|\left(t \sin(2\theta)\right)^{-1}, \tau\right) 
L\left(x_1-|\tilde \xi|\left(t \sin(2\theta)\right)^{-1},\tau \right) \frac{  \sin\theta \, \vert \sin(2\theta)\vert^{a-1} }{  |\tilde \xi| ^{a+1}}\,.
\end{align}
We observe that 
$\int_0^{\pi} d\theta [\dots]=\int_0^{\frac \pi 2} d\theta [\dots] +\int_{\frac \pi 2}^{\pi} d\theta [\dots] = 2\int_0^{{\frac \pi 2}} d\theta [\dots]$ once we perform the change of variables $\zeta=\pi-\theta$ which implies $|\sin(2\theta)|=|\sin(2\zeta)|$, $\sin(2\theta)=-\sin(2\zeta)$. Then, renaming the variables $\zeta \to \theta$, we have that \eqref{eq:MassF_gain3} becomes
\begin{align}
\label{eq:MassF_gain4}
&\int_{\R}d\xi \,  Q^+[F,F](\xi,\tau)  \notag \\&
=  \frac {16}{2^a} t^{a-1} \int_{\mathbb{R}^2}d\tilde{\xi} \, \int_{\mathbb{R}} dx_1   \int_0^{\frac{\pi}{2}} d\theta  \,L\left( x_1+  |\tilde \xi|\left(t \sin(2\theta)\right)^{-1}, \tau\right)
L\left(x_1-|\tilde \xi|\left(t \sin(2\theta)\right)^{-1},\tau \right) \frac{  \sin\theta \, \vert \sin(2\theta)\vert^{a-1} }{  |\tilde \xi| ^{a+1}}\nonumber \\&
= \frac {16}{2^a} t^{a-1} \int_{\mathbb{R}^2}d\tilde{\xi} \, \int_{\mathbb{R}} dx_1   \int_0^{{\pi}} \frac 1 2 d\psi  \,L\left( x_1+  \frac{|\tilde \xi|}{\left(t \sin(\psi)\right)}, \tau\right)
L\left(x_1-\frac{|\tilde \xi|}{\left(t \sin(\psi)\right)},\tau \right) \frac{  \sin\left(\frac \psi 2\right) \, \vert \sin(\psi)\vert^{a-1} }{  |\tilde \xi| ^{a+1}} 
\nonumber\\& 
= \frac {8}{2^a} t^{a-1} \int_{\mathbb{R}^2}d\tilde{\xi} \, \int_{\mathbb{R}} dx_1   \int_{0}^{\frac{\pi}2} d\psi  \,L\left( x_1+  \frac{|\tilde \xi|}{\left(t \sin(\psi)\right)}, \tau\right)
L\left(x_1-\frac{|\tilde \xi|}{\left(t \sin(\psi)\right)},\tau \right) \frac{  \left(\sin\left(\frac \psi 2\right)+\cos\left(\frac \psi 2\right)\right)  }{  |\tilde \xi| ^{a+1}}  \, \vert \sin(\psi)\vert^{a-1}
\end{align}
where in the second identity we changed variables and set $2\theta=\psi$ with $d\theta=\frac 1 2  d\psi$. Moreover, in the last identity, we used $\int_0^{{\pi}} d\psi [\dots]=\int_0^{\frac{\pi}2} d\psi [\dots] + \int_{\frac{\pi}2}^{{\pi}} d\psi [\dots]$ and changed the variables in $\int_{\frac{\pi}2}^{{\pi}} d\psi$ setting $\psi=\pi-\zeta$ so that $\int_{\frac{\pi}2}^{{\pi}} d\psi [\dots]=\int_{0}^{\frac{\pi}2} d\zeta [\dots]$ 
and we then renamed $\zeta \to \psi $. We now set $z=\frac{|\tilde \xi|}{\left(t \sin(\psi)\right)}$, i.e. $\sin(\psi)=\frac{|\tilde \xi|}{ t z }$ , with $dz=-\frac{|\tilde \xi|}{\left(t \sin^2(\psi)\right)}\cos(\psi)d\psi$
and from \eqref{eq:MassF_gain4} we arrive at 
\begin{align}
\label{eq:MassF_gain5}
&\int_{\R}d\xi \,  Q^+[F,F](\xi,\tau)  \nonumber \\&  
= \frac {8}{2^a}\frac{1}{t}\int_{\mathbb{R}^2}d\tilde{\xi} \, \int_{\mathbb{R}} dx_1   \int_{\frac{|\tilde \xi|}{t }}^{\infty} dz  \, \frac {L\left( x_1+ z, \tau\right)
L\left(x_1-z,\tau \right) }{z ^{a+1}}  \frac{  \left(\sin\left(\frac \psi 2\right)+\cos\left(\frac \psi 2\right)\right)  }{ \cos \psi}  \frac 1  {|\tilde \xi| }  \nonumber \\&
= \frac {8}{2^a} \frac{2\pi}{t} \int_{0}^{\infty} |\tilde{\xi}|\, d(|\tilde{\xi}|) \, \int_{\mathbb{R}} dx_1   \int_{\frac{|\tilde \xi|}{t }}^{\infty} dz  \, \frac {L\left( x_1+ z, \tau\right)
L\left(x_1-z,\tau \right) }{z ^{a+1}} \frac{  \left(\sin\left(\frac \psi 2\right)+\cos\left(\frac \psi 2\right)\right)  }{ \cos \psi}  \frac 1  {|\tilde \xi| } \nonumber \\&
= \frac {8}{2^a} \frac{2\pi}{t}  \int_{\mathbb{R}} dx_1   \int_{0}^{\infty} dz  \, \frac {L\left( x_1+ z, \tau\right)
L\left(x_1-z,\tau \right) }{z ^{a+1}} \int_{0}^{ zt} d(|\tilde{\xi}|)\frac{  \left(\sin\left(\frac \psi 2\right)+\cos\left(\frac \psi 2\right)\right)  }{ \cos \psi}\nonumber\\&
=\frac {8}{2^a} \frac{2\pi}{t}  \int_{\mathbb{R}} dx_1   \int_{0}^{\infty} dz  \, \frac {L\left( x_1+ z, \tau\right)
L\left(x_1-z,\tau \right) }{z ^{a+1}} \int_{0}^{ 1} tz\,d\rho \frac{  \left(\sin\left(\frac \psi 2\right)+\cos\left(\frac \psi 2\right)\right)  }{ \cos \psi}\nonumber \\& 
=\frac {8}{2^a} {2\pi} \int_{\mathbb{R}} dx_1   \int_{0}^{\infty} dz  \, \frac {L\left( x_1+ z, \tau\right)
L\left(x_1-z,\tau \right) }{z ^{a}} \int_{0}^{ 1} \,d\rho \frac{  \left(\sin\left(\frac \psi 2\right)+\cos\left(\frac \psi 2\right)\right)  }{ \cos \psi}  
\end{align}
where we passed in polar coordinates in $\int_{\mathbb{R}^2} d\tilde{\xi}[\dots]$, used Fubini and later the change of variables $ |\tilde{\xi}|=tz\rho$ with $d(|\tilde{\xi}|)=tz\,d\rho$. 
Setting now $\rho= \cos\psi d\psi$
we finally obtain 
\begin{align}\label{eq:MassF_gain6}
& 
\int_{\R}d\xi \,  Q^+[F,F](\xi,\tau)  \notag \\&  
=\frac {8}{2^a} {2\pi} \int_{\mathbb{R}} dx_1   \int_{0}^{\infty} dz  \, \frac {L\left( x_1+ z, \tau\right)
L\left(x_1-z,\tau \right) }{z ^{a}} \int_{0}^{ \frac{\pi}2} \,d\psi   \left(\sin\left(\frac \psi 2\right)+\cos\left(\frac \psi 2\right)\right) \nonumber\\&= \frac {8}{2^a} {4\pi} \int_{\mathbb{R}} dx_1   \int_{0}^{\infty} dz  \, \frac {L\left( x_1+ z, \tau\right)
L\left(x_1-z,\tau \right) }{|z| ^{a}}  = \frac {8}{2^a} \frac{4\pi} {2}\int_{\mathbb{R}} dx_1   \int_{\mathbb{R}} dz  \, \frac {L\left( x_1+ z, \tau\right)
L\left(x_1-z,\tau \right) }{|z| ^{a}}  
\end{align}
since $\int_{0}^{ \frac{\pi}2} \,d\psi   \left(\sin\left(\frac \psi 2\right)+\cos\left(\frac \psi 2\right)\right)=2$ and we used that $\int_{0}^{\infty} dz [\dots]=\frac 1 2 \int_{\mathbb{R}} dz [\dots]$ . 
We further set $\alpha=x_1+z,\,\beta=x_1-z$ so that $z=\frac{\alpha-\beta}{2}$ and with Jacobian $dx_1dz=\frac 1 2 d\alpha d\beta$. We thus obtain
\begin{align}
\label{eq:MassF_gain7}
& 
\int_{\R}d\xi \,  Q^+[F,F](\xi,\tau)    
=
 \frac {8}{2^a} \frac{4\pi} {2} \frac 1 2 \int_{\mathbb{R}} d\alpha   \int_{\mathbb{R}} d\beta  \, 2^a\,\frac {L\left( \alpha, \tau\right)
L\left(\beta,\tau \right) }{|\alpha-\beta| ^{a}} \nonumber\\& 
=
8\pi \int_{\mathbb{R}} d\alpha   \int_{\mathbb{R}} d\beta  \,\frac {L\left( \alpha, \tau\right)
L\left(\beta,\tau \right) }{|\alpha-\beta| ^{a}}
= 8\pi \int_{\R} d\xi   \int_{\R} d\xi_{\ast}  \,\frac{ F\left( \xi, \tau\right)F\left( \xi_{\ast}, \tau\right)}{|\xi-\xi_{\ast}|^a}
\end{align}
where we used \eqref{eq:defL_app}. The r.h.s of  \eqref{eq:MassF_gain7} coincides with \eqref{eq:MassF_loss} and this implies the mass conservation property of the equation \eqref{eq:ss_strong2}  satisfied by the distribution function $F$. 
 \medskip

\subsection{Justification of the form of the gain and loss term in the reduced two dimensional problem \eqref{eq:G} (cf. \eqref{eq:G_gainbis}-\eqref{eq:G_loss}).}  \label{ss:justificationGeq}

We now justify  \eqref{eq:G_gainbis}-\eqref{eq:G_loss} 
starting from \eqref{eq:ss_strong2} with the full collision  operator given as in \eqref{eq:ss_strong2_gain}.  
We recall that the equation satisfied by $G$ is the integrated version of \eqref{eq:ss_strong2} with respect to the third component of the variable $\xi$, namely $\xi_3$. 
We start considering the loss term, which is  simpler to obtain. We have 
\begin{align}\label{eq:G_loss_comp}
  {K}^-[G,G](\xi)&= \int_{\Rl}  d\xi_3  \, {Q}^-[F,F](\xi_1,\xi_2,\xi_3,\tau) \notag \\& =2 \int_{\Rl}  d\xi_3 \, F(\xi_1,\xi_2,\xi_3,\tau) \int_{S^2} d\omega\int_{\R}d \xi_{1,\ast}d\xi_{2,\ast}d\xi_{3,\ast} \,|\xi_1-\xi_{1,\ast}|^{-a} F(\xi_{1,\ast},\xi_{2,\ast},\xi_{3,\ast},\tau) \notag\\&  =8\pi G(\xi_1,\xi_2,\tau) \int_{\mathbb{R}^2} d \xi_{1,\ast}d\xi_{2,\ast} 
  \,|\xi_1-\xi_{1,\ast}|^{-a} G(\xi_{1,\ast},\xi_{2,\ast},\tau)
  \notag \\& 
  =8\pi G(\xi_1,\xi_2,\tau)  \int_{\mathbb{R}^2} d\eta \,|\xi_1-\eta_{1}|^{-a} G(\eta,\tau)=  G(\xi,\tau)\Lambda(\xi_1,\tau)\, 
\end{align}
which gives exactly \eqref{eq:G_loss}.  
On the other hand, regarding the gain term, we have 
\begin{align}\label{eq:G_gain}
  {K}^+[G,G](\xi) &= \int_{\Rl}d\xi_3 \,   {Q}^+[F,F](\xi,\tau)   \notag\\&  =\frac {16}{2^a} t^{a-1} \delta(\xi_1) \int_{\mathbb{R}} d\xi_3 \int_{\mathbb{R}^2} dx_1dx_2 \int_{\mathbb{R}}d{y_2}\int_0^{\pi}d\theta \, G\left( x_1+   |\tilde \xi|\left(t \sin(2\theta)\right)^{-1}, x_2+y_2, \tau\right) \times \notag \\& \qquad \qquad \qquad \qquad \qquad \times G\left(x_1-|\tilde \xi|\left(t \sin(2\theta)\right)^{-1},x_2-y_2,\tau \right) \frac{ \sin(\theta)|\sin(2\theta)|^{a-1}}{ |\tilde\xi|^{a+1}} 
 \end{align}
 with $\tilde \xi=(\xi_2,\xi_3)\in \mathbb{R}^2$ and where we used the  change of variables  $x_3+y_3=\eta_1$ and $x_3-y_3=\eta_2$ and the fact that the jacobian $dx_1dx_2=\frac 1 2 d\eta_1d\eta_2$.  Our goal is to obtain \eqref{eq:G_gainbis} starting  
from \eqref{eq:G_gain}.   
We look at \eqref{eq:G_gain} and separate the contributions of the angular integration in 
$
\int_0^\pi [\dots ] d\theta =\int_0^{\frac \pi 2} [\dots ] d\theta +\int_{\frac \pi 2}^\pi  [\dots ] d\theta $.  We will then have $${K}^+[G,G](\xi) := I_1+I_2 .$$
Performing the change of variables 
$\theta=\pi-\zeta$ in $I_2$, so that $\sin(2\theta)=-\sin(2\zeta)$ and $\sin\theta=\sin\zeta$ we then arrive at
\begin{align*}
 I_2 =
  \frac {16}{2^a} \, t^{a-1}\, \delta(\xi_1) \int_{\mathbb{R}} d\xi_3 \int_{\mathbb{R}^2} dx_1dx_2 \int_{\mathbb{R}}d{y_2}  &
  \Bigg[
  \int_0^{\frac{\pi}{2}}d\zeta \, G\left( x_1-  \frac{|\tilde \xi|}{\left(t \sin(2\zeta)\right)}, x_2+y_2, \tau\right) \times \notag \\& \ \ \times G\left(x_1+\frac{|\tilde \xi|}{\left(t \sin(2\zeta)\right)},x_2-y_2,\tau \right) \frac{ \sin(\zeta)|\sin(2\zeta)|^{a-1}}{|\tilde\xi|^{a+1}}
  \Bigg] \notag  
 \end{align*}
 We observe that changing $y_2\to -y_2$ in $I_2$  we obtain $I_2= I_1$ and then
 \begin{align}\label{eq:G_gain2}
  {K}^+[G,G](\xi)&=\frac {32}{2^a}  \, t^{a-1}\,  \delta(\xi_1) \int_{\mathbb{R}} d\xi_3 \int_{\mathbb{R}^2} dx_1dx_2 \int_{\mathbb{R}}d{y_2}  
  \Bigg[
  \int_0^{\frac{\pi}{2}}d\theta \, G\left( x_1+  \frac{|\tilde \xi|}{\left(t \sin(2\theta)\right)}, x_2+y_2, \tau\right) \times \notag \\& \ \ \times G\left(x_1-\frac{|\tilde \xi|}{\left(t \sin(2\theta)\right)},x_2-y_2,\tau \right) \frac{ \sin(\theta)|\sin(2\theta)|^{a-1}}{ |\tilde\xi|^{a+1}}
  \Bigg] \notag \\&
  =\frac {16}{2^a}  \, t^{a-1}\, \delta(\xi_1) \int_{\mathbb{R}} d\xi_3 \int_{\mathbb{R}^2} dx_1dx_2 \int_{\mathbb{R}}d{y_2}  
  \Bigg[
  \int_0^{\pi} d\psi \, G\left( x_1+  \frac{|\tilde \xi|}{\left(t \sin(\psi)\right)}, x_2+y_2, \tau\right) \times \notag \\& \ \ \times G\left(x_1-\frac{|\tilde \xi|}{\left(t \sin(\psi)\right)},x_2-y_2,\tau \right) \frac{ \sin(\frac \psi 2)|\sin(\psi)|^{a-1}}{|\tilde\xi|^{a+1}}
  \Bigg] \notag 
 \end{align}
 where in the second equality we have set $\psi=2\theta$. Using the same strategy as above, namely, separating $\int_0^{\pi} [\dots]d\psi =\int_0^{\frac \pi 2}[\dots] d\psi +\int_{\frac \pi 2}^{\pi} [\dots] d\psi $ so that ${K}^+[G,G]=J_1+J_2$ and setting $\zeta=\pi-\psi$ in $\int_{\frac \pi 2}^{\pi} [\dots] d\psi$ appearing in $J_2$, we can then rewrite ${K}^+[G,G]$ as 
\begin{align*} 
  {K}^+[G,G](\xi)&=\frac {16}{2^a} \, t^{a-1}\,  \delta(\xi_1) \int_{\mathbb{R}} d\xi_3 \int_{\mathbb{R}^2} dx_1dx_2 \int_{\mathbb{R}}d{y_2}  
  \Bigg[
  \int_0^{\frac \pi 2 } d\psi \, G\left( x_1+  \frac{|\tilde \xi|}{\left(t \sin(\psi)\right)}, x_2+y_2, \tau\right) \times \notag \\& \ \ \times G\left(x_1-\frac{ |\tilde \xi|}{\left(t \sin(\psi)\right)},x_2-y_2,\tau \right) \frac{ \Bigg(\sin(\frac \psi 2)+\cos(\frac \psi 2)\Bigg)|\sin(\psi)|^{a-1}}{  |\tilde\xi|^{a+1}}
  \Bigg] \notag .
  \end{align*} 
  We now perform the change of variables 
  \begin{equation}
     \label{eq:ChangeVar_zpsi}z=\frac{|\tilde\xi|}{t\sin(\psi)} \quad \text{with } \quad dz=- \frac{|\tilde\xi|}{t\sin^2(\psi)}\cos(\psi) d\psi, \quad \psi\in \left[0,\frac{\pi}{2}\right)  
     \end{equation} 
     and obtain, using the symmetry of the integral on $\xi_3$, 
 \begin{align*} 
  {K}^+[G,G](\xi)&= 
  \frac {32}{2^a} t^{a} \delta(\xi_1) \int_{0}^{+\infty} d\xi_3 \int_{\mathbb{R}^2} dx_1dx_2 \int_{\mathbb{R}}d{y_2}  
  \Bigg[
  \int_{\frac{|\tilde{\xi}|}{t}}^{+\infty } dz \, G\left( x_1+ z, x_2+y_2, \tau\right) \times \notag \\& \ \ \times G\left(x_1- z,x_2-y_2,\tau \right) \frac{ \Bigg(\sin(\frac \psi 2)+\cos(\frac \psi 2)\Bigg)}{ |\tilde\xi|^{a+2} } \frac{|\sin(\psi)|^{a+1}}{\cos(\psi)}
  \Bigg] \notag \,.
  \end{align*}
Recalling that $\tilde{\xi}=(\xi_2,\xi_3)$ and then $|\tilde{\xi}|=\sqrt{\xi_2^2 + \xi_3^2} $ and using Fubini we can rewrite
  $$ 
\int_{0}^{+\infty} d\xi_3 \int_{\frac{\sqrt{\xi_2^2 + \xi_3^2}}{t}}^{+\infty} dz  [\dots] =\int_{z=\frac{ |\xi_2|}{t}}^{+\infty} dz \int_{\xi_3=0}^{\sqrt{\left({z t}\right) ^2 - \xi_2^2}} d\xi_3 [\dots]
$$
and arrive at 
\begin{align}\label{eq:G_gain_4}
  {K}^+[G,G](\xi)&= 
  \frac {32}{2^a} t^{a} \delta(\xi_1)  \int_{\mathbb{R}^2} dx_1dx_2 \int_{\mathbb{R}}d{y_2}  \int_{\frac{|\xi_2|}{t}}^{+\infty} dz \, G\left( x_1+ z, x_2+y_2, \tau\right) G\left(x_1- z,x_2-y_2,\tau \right) \notag \\& \ \ 
  \quad \times \int_{ 0}^{\sqrt{\left({z t}\right) ^2 - \xi_2^2}} d\xi_3  \frac{ \Big(\sin(\frac \psi 2)+\cos(\frac \psi 2)\Big)}{ |\tilde\xi|^{a+2} } \frac{|\sin(\psi)|^{a+1}}{\sqrt{1-\sin^2(\psi)}}\,
  \end{align}
  where the relation between $\psi$ and $\xi_3$ is given as in \eqref{eq:ChangeVar_zpsi}. 
  We now look at 
\begin{equation}\label{def:Jint}
      J:= \int_{ 0}^{\sqrt{\left( {z t} \right) ^2 - \xi_2^2}} d\xi_3  \frac{  \Big(\sin(\frac \psi 2)+\cos(\frac \psi 2)\Big)}{  |\tilde\xi|^{a+2} } \frac{|\sin(\psi)|^{a+1}}{\sqrt{1-\sin^2(\psi)}}\,
  \end{equation}
  that, using simple trigonometric arguments in terms of bisection formulas, becomes 
  \begin{align}\label{def:Jint}
      J &= \int_{ 0}^{\sqrt{\left( {z t}\right) ^2 - \xi_2^2}} d\xi_3  \frac{|\sin(\psi)|^{a+1}}{\sqrt{1-\sin^2(\psi)}} 
      \frac{   \sqrt{\frac{1 - \sqrt{1 - \sin^2 \psi}}{2}} + \sqrt{\frac{1 + \sqrt{1 - \sin^2 \psi}}{2}}}{  |\tilde\xi|^{a+2} }\notag \\&
      =\frac{1}{\sqrt{2}}\int_{ 0}^{\sqrt{\left({z t}\right)^2 - \xi_2^2}} d\xi_3  \frac{\left(\frac{ |\tilde\xi|}{tz}\right)^{a+1}}{\sqrt{1-\left(\frac{|\tilde\xi|}{tz}\right)^2}} 
      \frac{   \sqrt{1 - \sqrt{1 - \left(\frac{|\tilde\xi|}{tz}\right)^2} } + \sqrt{1 + \sqrt{1 - \left(\frac{|\tilde\xi|}{tz}\right)^2}}}{  |\tilde\xi|^{a+2} } 
\end{align}

We now perform another change of variables. We set $\rho= \frac{|\tilde\xi|}{tz}$. Using that $|\tilde\xi|=\sqrt{\xi_2^2+\xi_3^2}$ we can express $\xi_3$ in terms of $\rho$, i.e. $\xi_3= \sqrt{\left({tz\rho}\right)^2-\xi_2^2}$, and thus $d\xi_3= \frac{\left({tz}\right)^2 \rho \, }{\sqrt{\left({tz\rho}\right)^2-\xi_2^2}}d\rho $. Since $ |\tilde\xi|^{a+2}=  (tz\rho)^{a+2}$ it then follows that
\begin{align}\label{def:Jint2}
      J= & 
      \frac{1}{\sqrt{2}}
      \int_{\frac{|\xi_2|}{tz} }^{1}   \frac{\rho^{a+1}}{\sqrt{1-\rho^2}} 
      \frac{  \sqrt{1 - \sqrt{1 - \rho^2} } + \sqrt{1 + \sqrt{1 - \rho^2}}}{ (tz\rho)^{a+2}} \frac{\left({tz}\right)^2 \rho}{\left({tz}\right)\sqrt{\rho^2-\left(\frac{|\xi_2|}{tz}\right)^2}} d\rho \notag \\&
    = \frac{\sqrt{2}}{2}\int_{ \frac{|\xi_2|}{tz}}^{1}   \frac{\rho^{a+2}}{\sqrt{1-\rho^2}} 
      \frac{  \sqrt{1 - \sqrt{1 - \rho^2} } + \sqrt{1 + \sqrt{1 - \rho^2}}}{ (tz\rho)^{a+2}} \frac{\left(tz \right)}{ \sqrt{\rho^2-\left(\frac{|\xi_2|}{tz}\right)^2}} d\rho \notag \\&
  = \frac{\sqrt{2}}{2\, (tz)^{a+1}}\int_{ \frac{|\xi_2|}{tz}}^{1}   \frac{1}{\sqrt{1-\rho^2}} 
      \frac{  \sqrt{1 - \sqrt{1 - \rho^2} } + \sqrt{1 + \sqrt{1 - \rho^2}}} { \sqrt{\rho^2-\left(\frac{|\xi_2|}{tz}\right)^2}} d\rho   = \frac{\sqrt{2}}{2\,(tz)^{a+1}} \Phi\left(\frac{|\xi_2|}{tz}\right)
\end{align}
where we introduced the auxiliary function $\Phi$, with self-similar structure, defined as in \eqref{def:auxPhi}, i.e.
\begin{equation*}
     \Phi\left(s\right)= \int_{ s}^{1}   \frac{1}{\sqrt{1-\rho^2}} 
      \frac{  \sqrt{1 - \sqrt{1 - \rho^2} } + \sqrt{1 + \sqrt{1 - \rho^2}}} { \sqrt{\rho^2-s^2}} d\rho  , \quad 0<s<1  \,.
\end{equation*}
Using \eqref{def:Jint2} in \eqref{eq:G_gain_4} the gain term becomes
\begin{align}\label{eq:G_gain_5}
  {K}^+[G,G](\xi)&= 
  \frac {16 \sqrt{2}}{2^a} t^{a} \delta(\xi_1)  \int_{\mathbb{R}^2} dx_1dx_2 \int_{\mathbb{R}}d{y_2}  \int_{\frac{ |\xi_2|}{t}}^{+\infty} dz \, G\left( x_1+ z, x_2+y_2, \tau\right) G\left(x_1- z,x_2-y_2,\tau \right) 
  \frac{1}{(tz)^{a+1}}\Phi\left(\frac{|\xi_2|}{tz}\right) \notag \\&
  =\frac {16 \sqrt{2}}{2^a \, t}  \delta(\xi_1)  \int_{\mathbb{R}^2} dx_1dx_2 \int_{\mathbb{R}}d{y_2}  \int_{\frac{|\xi_2|}{t}}^{+\infty} dz \, G\left( x_1+ z, x_2+y_2, \tau\right) G\left(x_1- z,x_2-y_2,\tau \right) \frac{1}{z^{a+1}}\Phi\left(\frac{|\xi_2|}{tz}\right) \,. 
  \end{align}
  Setting now $\eta_2=x_2+y_2$ and $\zeta_2=x_2-y_2$, that implies $x_2=\frac 1 2 (\eta_2+\zeta_2)$ $y_2=\frac 1 2 (\eta_2-\zeta_2)$ and with Jacobian $dx_2\,dy_2=\frac 1 2 d\eta_2\,d\zeta_2$, we rewrite \eqref{eq:G_gain_5} as 
  \begin{align*}
  {K}^+[G,G](\xi)& 
 =\frac {8\sqrt{2} }{2^a \, t}\delta(\xi_1)  \int_{\mathbb{R}} dx_1 \int_{\mathbb{R}} d\eta_2 \int_{\mathbb{R}}d\zeta_2 \int_{\frac{|\xi_2|}{t}}^{+\infty} dz \, G\left( x_1+ z, \eta_2, \tau\right) G\left(x_1- z,\zeta_2,\tau \right) \frac{1}{z^{a+1}}\Phi\left(\frac{|\xi_2|}{tz}\right)  
  \end{align*}
which coincides with \eqref{eq:G_gainbis}. We thus proved the equivalence of \eqref{eq:G_gain} and \eqref{eq:G_gainbis}.

\subsection{Conservation of mass for the equation satisfied by the reduced distribution $G$}
\label{ssec:massG}

We consider the equation satisfied by $G$, namely
\begin{equation}\label{eq:G_2}
    \partial_{\tau}G-\frac{1}{a}\partial_{\xi_1}(\xi_1 G)-\left(\frac{1}{a}-1\right)\partial_{\xi_2}(\xi_2 G)
   +  \partial_{\xi_1}(\xi_2 G )=  {K}^+[G,G](\xi)- {K}^-[G,G](\xi), \quad G=G(\xi,\tau), \quad \xi\in \mathbb{R}^2
\end{equation}
with ${K}^+[G,G]$ given as in \eqref{eq:G_gainbis} and ${K}^-[G,G]$ given as in \eqref{eq:G_loss} and prove that this equation satisfies the conservation of mass property. 
Integrating the left hand side of \eqref{eq:G_2} with respect to $\xi\in \mathbb{R}^2$ we obtain zero, therefore it remains to show that 
$\int_{\mathbb{R}^2} {K}^+[G,G](\xi) \, d\xi=\int_{\mathbb{R}^2} {K}^-[G,G](\xi) \, d\xi$. 
Integrating the loss term \eqref{eq:G_loss} we have
\begin{equation}
\label{eq:massGloss}
\int_{\mathbb{R}^2} {K}^-[G,G](\xi) \, d\xi= 8\pi \int_{\mathbb{R}^2} d\xi  \int_{\mathbb{R}^2} d\eta \,|\xi_1-\xi_{1,\ast}|^{-a} \, G(\xi,\tau) G(\eta,\tau)\, . 
\end{equation}
The integral of the gain term \eqref{eq:G_gainbis} is 
 \begin{align}
\label{eq:massGgain} 
& \int_{\mathbb{R}^2} {K}^+[G,G](\xi) \, d\xi\notag \\& \quad = \frac {8  {\sqrt{2}} }{2^a \, t} \int_{\mathbb{R}} d\xi_1 \delta(\xi_1)  \int_{\mathbb{R}} d\xi_2 \int_{\mathbb{R}} dx_1 \int_{\mathbb{R}} d\eta_2 \int_{\mathbb{R}}d\zeta_2 \int_{\frac{ |\xi_2|}{t}}^{+\infty} dz \, G\left( x_1+ z, \eta_2, \tau\right) G\left(x_1- z,\zeta_2,\tau \right) \frac{1}{z^{a+1}}\Phi\left(\frac{|\xi_2|}{tz}\right)\notag \\&
\quad 
=\frac {16 \sqrt{2} }{2^a \, t}  \int_{0}^\infty d\xi_2 \int_{\mathbb{R}} dx_1 \int_{\mathbb{R}} d\eta_2 \int_{\mathbb{R}}d\zeta_2 \int_{\frac{ |\xi_2|}{t}}^{+\infty} dz \, G\left( x_1+ z, \eta_2, \tau\right) G\left(x_1- z,\zeta_2,\tau \right) \frac{1}{z^{a+1}}\Phi\left(\frac{|\xi_2|}{tz}\right)
 \end{align}
 where we used that $\int_{\mathbb{R}} d\xi_2=2 \int_{0}^\infty d\xi_2$ by symmetry. Applying Fubini, we can exchange the order of integration  $\int_{0}^{\infty} d\xi_2 \int_{\frac{|\xi_2|}{t}}^{+\infty} dz [\dots]=\int_{0}^\infty dz \int_{0}^{{tz}} d\xi_2 [\dots]$ so that \eqref{eq:massGgain} can be rewritten as 
\begin{align}
\label{eq:massGgain2} 
&  \int_{\mathbb{R}^2} {K}^+[G,G](\xi) \, d\xi = \\&
\quad =\frac {16 \sqrt{2} }{2^a \, t}  \int_{0}^\infty dz \int_{\mathbb{R}} dx_1 \int_{\mathbb{R}} d\eta_2 \int_{\mathbb{R}}d\zeta_2 \int_{0}^{ {tz} } d\xi_2  \, G\left( x_1+ z, \eta_2, \tau\right) G\left(x_1- z,\zeta_2,\tau \right) \frac{1}{z^{a+1}}\Phi\left(\frac{ \xi_2}{tz}\right)\,,
 \end{align}
 where we used the fact that $\xi_2>0$ in the region of integration. 
We now consider  
$$\int_{0}^{tz} d\xi_2 \Phi\left(\frac{\xi_2}{tz}\right)= {tz}\int_0^1 ds\, \Phi\left(s\right)$$
where we used the change of variable $s=\frac{\xi_2}{tz}$. 
Recalling the definition of the function $\Phi$ (cf. \eqref{def:auxPhi}) we have 
\begin{align*}
    \int_0^1 ds\, \Phi\left(s\right)&=\int_0^1 ds\, \int_{ s}^{1}   \frac{1}{\sqrt{1-\rho^2}} 
      \frac{  \sqrt{1 - \sqrt{1 - \rho^2} } + \sqrt{1 + \sqrt{1 - \rho^2}}} { \sqrt{\rho^2-s^2}} d\rho \notag\\&
      =\int_0^1 d\rho \, \frac{1}{\sqrt{1-\rho^2}} 
      {\left[  \sqrt{1 - \sqrt{1 - \rho^2} } + \sqrt{1 + \sqrt{1 - \rho^2}} \right]} \int_{ 0}^{\rho} ds\, \frac 1 { \sqrt{\rho^2-s^2}}  \notag \\& 
      = \frac \pi 2 \int_0^1 d\rho \, \frac{1}{\sqrt{1-\rho^2}} 
      {\left[  \sqrt{1 - \sqrt{1 - \rho^2} } + \sqrt{1 + \sqrt{1 - \rho^2}}\right]} \notag \\& 
       = \frac \pi 2 \int_0^{\frac \pi 2} d\vp \, \left[  
      {  \sqrt{1 - \cos(\vp) } + \sqrt{1 + \cos(\vp)}} \right] \notag \\& =
      \frac {\sqrt{2}\pi} 2 \int_0^{\frac \pi 2} d\vp \, \left[  
    \sin \left(\frac \vp 2\right) +  \cos\left(\frac \vp 2\right) \right] 
    = \sqrt{2} \pi
\end{align*}
where in the fourth identity we used the change of variables $\rho=\sin(\vp)$ and then applied the trigonometric identities. Therefore,
\begin{align}\label{eq:intPhi} 
\int_{0}^{ tz } d\xi_2\, \Phi\left(\frac{|\xi_2|}{tz}\right)
= {\sqrt{2} \pi} \, tz\, .
\end{align}
Using \eqref{eq:intPhi} in \eqref{eq:massGgain2} we finally arrive at 
\begin{align}
\label{eq:massGgain3} 
&  \int_{\mathbb{R}^2} {K}^+[G,G](\xi) \, d\xi = \notag \\&
 =\frac {16 \sqrt{2}} {2^a \, t}  \int_{0}^\infty dz \int_{\mathbb{R}} dx_1 \int_{\mathbb{R}} d\eta_2 \int_{\mathbb{R}}d\zeta_2  \, G\left( x_1+ z, \eta_2, \tau\right) G\left(x_1- z,\zeta_2,\tau \right) \frac{1}{z^{a+1}} {\sqrt{2} \pi} \, tz \,\notag \\&
\quad =
\frac {16 \cdot {2\pi} }{2^a}  \int_{0}^\infty dz \int_{\mathbb{R}} dx_1 \int_{\mathbb{R}} d\eta_2 \int_{\mathbb{R}}d\zeta_2  \, \frac{1}{z^{a}} \, G\left( x_1+ z, \eta_2, \tau\right) G\left(x_1- z,\zeta_2,\tau \right)  \, \notag \\& 
\quad =\frac {16 \cdot {2\pi} }{2^a} \frac 1 2 \int_{\mathbb{R}} dz \int_{\mathbb{R}} dx_1 \int_{\mathbb{R}} d\eta_2 \int_{\mathbb{R}}d\zeta_2  \, \frac{1}{|z|^{a}} \, G\left( x_1+ z, \eta_2, \tau\right) G\left(x_1- z,\zeta_2,\tau \right)\notag \\&
\quad =\frac {16 \cdot {2\pi} }{2^a} \frac 1 4 \int_{\mathbb{R}} d\eta_1 \int_{\mathbb{R}} d\eta_2 \int_{\mathbb{R}} d\zeta_1 \int_{\mathbb{R}}d\zeta_2  \, \frac{2^{a}}{| \eta_1-\zeta_1|^{a}} \, G\left( \eta_1, \eta_2, \tau\right) G\left(\zeta_1,\zeta_2,\tau \right)\notag \\& 
\quad = 8 \pi \int_{\mathbb{R}^2} d\eta \int_{\mathbb{R}} d\zeta \, | \eta_1-\zeta_1|^{-a}\,G(\eta, \tau)G(\zeta, \tau) =    \int_{\mathbb{R}^2} {K}^-[G,G](\zeta) \, d\zeta \, .
 \end{align}
 where in the forth identity we used the change of variables $x_1+z=\eta_1$, $x_1-z=\zeta_1$ so that $z=\frac {\eta_1-\zeta_1}{2}$ with $dx_1dz=\frac 1 2 d\eta_1d\zeta_1$. The computation above hence shows the mass conservation property of $G$, as expected.

\bigskip

\bigskip

\noindent\textbf{Acknowledgements.}  
J.~J.~L.~Vel\'azquez 
gratefully acknowledges the support by the Deutsche Forschungsgemeinschaft (DFG) through the collaborative research centre ``Analysis of criticality: from complex phenomena to models and estimates" (CRC 1720, Project-ID 539309657)  and Germany’s Excellence Strategy -EXC2047/2-390685813 funded by DFG.

\noindent The funders had no role in study design, analysis, decision to publish, or
preparation of the manuscript.

\bigskip

\adresse

\end{document}